\newcommand{\McC}{\raise.5ex\hbox{c}}
\newcommand{\IM}{\mathrm{Im}\,}
\numberwithin{theorem}{section}
\title{Stable Polynomials via Undirected Colored Graphs%
  \thanks{Submitted June 14, 2025
  \date.
\funding{Both authors were partially supported by National Science Foundation
DMS grant \#2000088}}}
\author{Kelly Bickel%
  \thanks{Department of Mathematics, Bucknell University, 360 Olin Science Building, Lewisburg, PA 17837, USA (\email{kelly.bickel@bucknell.edu}).}%
  \and
  Yang Hong%
  \thanks{Department of Mathematics, University of Maryland, College Park, 4326 Atlantic Building, College Park, MA 20742, USA
    (\email{yh013@bucknell.edu})}
}
\begin{document}
\maketitle

\begin{abstract} This paper initiates a systematic study of connections between undirected colored graphs and associated two-variable stable polynomials obtained via Cauchy transform-type formulas. Examples of such stable polynomials have played crucial roles in other recent studies, though their general properties have remained rather opaque. Using linear algebra techniques, this paper characterizes when these stable polynomials have boundary zeros and studies the finer behavior, called contact order, of their zero sets near the guaranteed boundary zeros.
\end{abstract}

 \begin{keywords}Stable polynomials, colored graphs, rational inner functions\end{keywords}
  \begin{AMS} Primary: 32A08,
Secondary: 
05C50, 32A40, 47A48 \end{AMS}

\section{Introduction}
\subsection{Key Idea}
\raggedbottom

A two-variable polynomial $p$ is said to be stable on a domain $\Omega$ if it has no zeros on $\Omega$. In this paper, we will generally let $\Omega$ be the bidisk $\mathbb{D}^2$, where  $\mathbb{D} = \{ z\in \mathbb{C} : |z| <1 \}$.
For example, $q(z_1, z_2) = 2-z_1-z_2$ is stable on $\mathbb{D}^2$, though stable polynomials can be much more complicated. Their structure and properties are still being explored, see \cite{bps18, bps19a, bpsk22, Kne10, Kne15, Sola23}, and they (and related objects) have appeared recently as key tools in a number of fields including probability theory, control
theory, electrical engineering, combinatorics, graph theory, physics, and dynamical systems, see \cite{BSV05, BBL09, lsv13,  Kum02, KS20, MSS1, MSS2,  W11}.

In this paper, motivated by work of Pascoe in \cite{Pascoe18}, we initiate a study of connections between stable polynomials and undirected, colored graphs. Specifically, we will start with a  graph and use its adjacency matrix to construct a (generally quite nontrivial) two-variable stable polynomial; this construction method had yielded crucial examples in recent papers \cite{bps19a, bps19b, bpsk22}, though the properties of general polynomials obtained via this construction have not been previously studied. 

We illustrate this construction with an example: starting with the $4$-vertex colored graph given in Figure \ref{fig:4vertexgraph}, we can extract its adjacency matrix $A$ and a matrix $Y$ representing the coloring of blue on vertices $1-3$ and of red  on vertex $4$ (a blue coloring is also indicated by circle-shaped vertices and a red coloring is indicated by square-shaped vertices).
\begin{figure}[ht!] 
\begin{minipage}{0.30\linewidth}
\begin{center}
\begin{tikzpicture}[
roundnode/.style={circle, draw=blue!60, fill=blue!5, very thick, minimum size=7mm},
squarednode/.style={rectangle, draw=red!60, fill=red!5, very thick, minimum size=7mm}, 
]
\node[roundnode]      (node1)                              {1};
\node[roundnode]        (node2)       [below=of node1] {2};
  \node[roundnode]    (node3)       [right=of node2] {3};
 \node[squarednode]       (node4)       [right=of node1] {4};
\draw[-] (node1.south) -- (node2.north);
\draw[-] (node2.east) -- (node3.west);
\draw[-]  (node4.west) -- (node1.east);
\draw[-]  (node4.south) -- (node3.north);
\end{tikzpicture}
\end{center}
\end{minipage}%
\begin{minipage}{0.7\linewidth}
\[ A = \begin{bmatrix} 0 & 1 & 0 &1 \\ 
 1 & 0 & 1 &0 \\
  0 & 1 & 0 &1 \\
   1 & 0 & 1 &0 \end{bmatrix}, \  \ \ Y = \begin{bmatrix} 1 & 0 & 0 &0 \\ 
 0 & 1 & 0 &0 \\
  0 & 0 & 1 &0 \\
   0 & 0 & 0 &0 \end{bmatrix} \hspace{.4in} \]
\end{minipage}
\caption{A graph with adjacency matrix A and coloring matrix $Y$.}
\label{fig:4vertexgraph}
\end{figure}
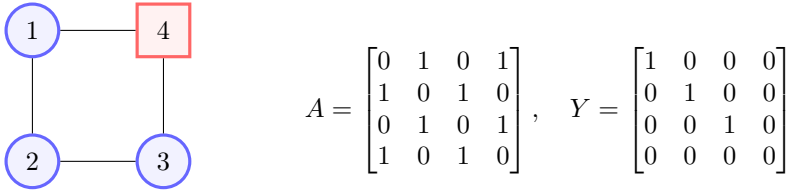
Define the associated function
\begin{equation} \label{eqn:exf}f(z_1, z_2):= \Big( A - z_1Y - z_2( I - Y) \Big)^{-1}_{11} = \frac{z_1 + z_2 -z_1^2z_2}{z_1^3z_2-2z_1^2-2z_1z_2}, \end{equation}
where we took the $(1,1)$ entry of the inverse of the matrix $A - z_1Y - z_2( I - Y)$. Because $A$ is self-adjoint and $0 \le Y\le I$ (where $0$ and $I$ are the zero and identity matrices respectively), the function $f$
maps $\mathbb{H}^2$ to $\mathbb{H}$, where $\mathbb{H} = \{ z \in \mathbb{C}: \IM(z) >0 \}$ is the upper-half plane. After composing $f$ with conformal maps between $\mathbb{H}$ and $\mathbb{D}$ and extracting the denominator, one can obtain 
\begin{equation} \label{eqn:firstp} p(z_1, z_2) = 4-z_2-z_1z_2-3z_1^2z_2+z_1^3z_2,\end{equation}
which is a stable polynomial on $\mathbb{D}^2.$
In general, this method yields exotic stable polynomials whose properties reflect the properties of the original graph. This construction and associated investigations fit into a larger body of ongoing research on connections between graph theory, linear algebra, and complex analysis, see for example \cite{ATTT23, AHL22, AT15, H23}.  

\subsection{Background and General Setup} The construction above is rooted in representation theory for two-variable holomorphic functions. In one variable, if $f:\mathbb{H} \rightarrow \overline{\mathbb{H}}$ is holomorphic and $\liminf_{y\rightarrow \infty} |y f(iy)| <\infty$, then Nevanlinna's representation theorem says that $f$ is the Cauchy transform of a finite positive Borel measure $\mu$ on $\mathbb{R}$, namely:
\[ f(z) = \int_{\mathbb{R}} \frac{d\mu(t)}{t-z} = \left \langle (A - zI)^{-1} 1, 1 \right \rangle_{L^2(\mu)}, \]
where $A$ is multiplication by the independent variable on $L^2(\mu)$ and $1$ is the constant function.

In the two-variable setting, Agler, Tully-Doyle, and Young in \cite[Theorem 1.6]{ATDY16} obtained a useful generalization of Nevanlinna's representation theorem. 
They proved that for any holomorphic $f: \mathbb{H}^2 \rightarrow \overline{\mathbb{H}}$ with $\liminf_{y\rightarrow \infty} |f(iy,iy) y | <\infty$, there is a Hilbert space $\mathcal{H}$, a densely-defined, self-adjoint operator $A$ on $\mathcal{H}$,  a positive operator $Y$ on $\mathcal{H}$ with $0 \le Y \le I$, and a vector $\alpha \in \mathcal{H}$ so that 
\[ f(z_1, z_2) = \left \langle (A - z_1Y - z_2(I-Y))^{-1} \alpha, \alpha \right \rangle_{\mathcal{H}},\]
for $z\in \mathbb{H}^2$.  A careful reading of Agler's earlier work \cite{Agler90} implies that if $f$ is rational and real almost everywhere on $\mathbb{R}^2$, then we can take the Hilbert space $\mathcal{H}$ to be finite-dimensional, so that $A$ can be viewed as an $n \times n$ self-adjoint matrix and $Y$ as a positive semidefinite matrix. Then, we can write 
\begin{equation} \label{eqn:RRP} f(z_1, z_2) = \alpha^\star (A - z_Y)^{-1} \alpha,\end{equation}
where $\alpha^\star$ is the conjugate transpose of $\alpha$ and $z_Y$  is shorthand for $z_1 Y + z_2(I-Y).$ This is exactly the form of $f$ in \eqref{eqn:exf} with $\alpha = e_1,$ the first standard basis vector of $\mathbb{R}^n.$ To move to $\mathbb{D}^2$, one can use the following conformal maps from $\mathbb{D}$ to $\mathbb{H}$ and back:
\begin{equation} \label{eqn:beta}
\beta:\mathbb{H} \rightarrow \mathbb{D}, \text{ with }  \beta(z)=  \frac{1 + zi}{1 - zi} \ \text{ and } \ 
\beta^{-1}: \mathbb{D}\rightarrow \mathbb{H},  \text{ with }    \beta^{-1}(z) = i\left(\frac{1-z}{1+z}\right).
\end{equation}

If $f$ is defined as  in \eqref{eqn:RRP} with $\alpha \in \mathbb{R}^n$, then $\phi: = \beta \circ f \circ \beta^{-1}$ is a rational function holomorphic on $\mathbb{D}^2$ with $|\phi|=1$ almost everywhere on $\mathbb{T}^2$, where $\mathbb{T} = \{ z\in \mathbb{C} : |z| =1\}$ is the unit circle.  Such functions $\phi$ are called \emph{rational inner functions} and have denominators which are stable polynomials on $\mathbb{D}^2$. Moreover, given a stable polynomial $p$ on $\mathbb{D}^2$ with bidegree $(m,n)$, its reflection polynomial $\tilde{p}$ is defined to be
\begin{equation} \label{eqn:tildep} \tilde{p}(z_1, z_2) = z_1^m z_2^n \overline{p\left( \frac{1}{\bar{z}_1},  \frac{1}{\bar{z}_2}\right)} \end{equation}
and any rational inner function $\phi$ with denominator $p$ is of the form
\begin{equation} \label{eqn:RIFform} \phi(z_1, z_2) = \lambda z_1^k z_2^\ell \frac{\tilde{p}(z_1, z_2)}{p(z_1,z_2)}, \end{equation}
where $\lambda \in \mathbb{T}$
 and $k, \ell \in \mathbb{N}$, see \cite{Rud69}. If $\phi$ is written so that its numerator and denominator have no common factors, then its stable polynomial denominator will also have no zeros on the faces of the bidisk $(\mathbb{D} \times \mathbb{T}) \cup (\mathbb{T} \times \mathbb{D})$, see Lemma 10.1 in \cite{Kne15}.
Throughout this paper, when we refer to the denominator of a rational inner function, we will assume that any common factors between numerator and denominator have already been canceled.

Rational inner functions on  $\mathbb{D}^2$ are the two-variable analogues of important one-variable functions called finite Blaschke products and play crucial roles in two-variable interpolation, approximation, and matrix monotonicity questions, see \cite{AglMcCbook, AMY12b, Rud69}. The behavior of rational inner functions near boundary singularities (the boundary zeros of their stable polynomial denominators) can be complicated and a number of properties (including derivative integrability, non-tangential polynomial approximations, and unimodular level set structures) have been studied in \cite{bps18, bps19a, bpsk22}. 

In this paper, we construct functions $f$ as in \eqref{eqn:RRP} and associated rational inner functions $\phi$ by starting with an $n$-vertex, undirected, simple graph $G$ with vertices $v_1,\dots, v_n$, letting $A$ denote its adjacency matrix, and letting $Y$ be a given coloring matrix, defined by 
\[ A_{ij} = \left\{ \begin{array}{ll} 1 & \text{if there is an edge between $v_i$ and $v_j$} \\
 0 & \text{if there is no edge between $v_i$ and $v_j$} \end{array} \right. \quad \text{ and } \quad Y =   \begin{bmatrix} 
1 &&& \\ & \ddots && \\
&& 1 & \\
&&&t \end{bmatrix}, \]
where $t \in [0,1)$. Here, the matrix $Y$ should be interpreted as coloring vertices $v_1$ through $v_{n-1}$ with blue and vertex $n$ with a color ranging from red ($t=0$) to shades of purple ($0<t<1$). Other ``colorings'' could be studied using different choices of $Y$. We let $\alpha = e_1$ as before, which somewhat focuses our study on the first vertex of $G.$

For these choices of $A, Y, \alpha,$ we denote $f$ in \eqref{eqn:RRP} by $f_A^t$ and the associated function $\beta \circ f_A^t \circ \beta^{-1}$ by $\phi_A^t = \frac{q_A^t}{p_A^t}$ (with no common terms between numerator and denominator polynomials $q_A^t$ and $p_A^t$). With the choice of $\alpha=e_1$, observe that 
\begin{equation} \label{eqnft} f_A^t(z_1, z_2) = \alpha^\star (A-z_1 Y - z_2(I-Y))^{-1} \alpha =  (A - z_{Y})^{-1}_{11},\end{equation}
the $(1,1)$-entry of the matrix inverse of $A- z_{Y}$. If $t=0$, we generally drop the $t$-dependence and write $f_A, \phi_A, q_A,$ and $p_A$. We often denote the graph $G$ with adjacency matrix $A$ by $G_A.$

 This construction was used to good effect in \cite{bps18, bps19a, bps19b,Pascoe18}, where it yielded key examples that illustrated various stable polynomial and rational inner function behavior. In this paper, we begin a study of the properties of the initial  graph $G_A$ and the resulting rational inner functions $\phi_A^t$ and their stable polynomial denominators $p_A^t$, which explains many of the behaviors observed in \cite{bps18, bps19a, bps19b}. 
The interested reader should also consult the new preprint \cite{ATTT23} by Adlin, Thai, Tiscarenoby, and Tully-Doyle, which explores the structure of the rational functions in \eqref{eqnft} using a number of creative graph-theoretic and linear algebra techniques. 

Other stable polynomial constructions appear in the literature. For example in \cite{GW06}, Geronimo and Woerdeman give matrix positivity conditions that allow one to check whether a given polynomial is stable and also provide an algorithm for producing stable $p$ with some specified zeros. Meanwhile, in \cite[Theorem 2.19]{bpsk22}, the authors construct examples of stable polynomials with particular Puiseux expansions at a specified boundary zero. Recently, in \cite{Sola23}, Sola constructs stable polynomials on the polydisk with certain behaviors at boundary zeros using iterates of rational inner functions. In all constructions,  the key point of concern is whether the polynomial has zeros on $\mathbb{T}^2$ and how the polynomial behaves near such boundary zeros.

\subsection{Main Results \& Paper Outline} In this paper, we assume that we are starting with an undirected, simple graph $G_A$ with vertices $v_1, \dots, v_n$ and then investigate the connections between the graph $G_A$ and the associated stable polynomials $p_A$ and $p_A^t$ for $t\in(0,1).$ Specifically, in Section \ref{sec:bdyzero}, we address the first natural question one should ask when given a class of stable polynomials on $\mathbb{D}^2$: 
\begin{center} \emph{Question 1: When does a stable polynomial in the class have a boundary zero?}\end{center} 
As encoded in the following result, the basic structure of the graph $G_A$  immediately  tells us whether $p_A$ depends on  both variables and whether it has a zero on $\mathbb{T}^2$:

\begin{theorem} \label{thm:1} Let $G_A$ be an $n$-vertex graph and let $p_A$ be the associated stable polynomial. Then the following hold:
\begin{itemize}
    \item[i.] If $v_1$ is isolated, then $p_A$ is constant. 
    \item[ii.] If $v_1$ is neither isolated nor path connected to $v_n$, then $p_A \in \mathbb{C}[z_1]$ is not constant and has no zeros on $\mathbb{T}^2$.
    \item[iii.] If $v_1$ and $v_n$ are path connected, then $p_A \in \mathbb{C}[z_1, z_2]$ depends on $z_2$ and $p_A(-1,1)=0$.
\end{itemize}
\end{theorem}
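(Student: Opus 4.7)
The plan is to work with the determinantal identity
\[
\phi_A(z_1,z_2)\;=\;\frac{\det(A - u_Y + i e_1 e_1^\star)}{\det(A - u_Y - i e_1 e_1^\star)},
\]
where $u_j = \beta^{-1}(z_j)$ and $u_Y = u_1 Y + u_2(I-Y)$. This follows from the rank-one update formula $\det(M + \lambda e_1 e_1^\star) = \det M + \lambda C_{11}(M)$ applied with $\lambda = \pm i$. Multiplying rows $1,\ldots,n-1$ of each matrix by $(1+z_1)$ and row $n$ by $(1+z_2)$ clears all Cayley denominators and produces polynomials $\Delta^{\pm}(z_1,z_2)$ of bidegree at most $(n-1,1)$ satisfying $\phi_A = \Delta^+/\Delta^-$, so $p_A = \Delta^-/g$ where $g := \gcd(\Delta^+,\Delta^-)$.

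For part (i), if $v_1$ is isolated then the first row and column of $A$ vanish, giving $(A-u_Y)^{-1}_{11} = -1/u_1$ directly; the Cayley composition yields $\phi_A(z_1,z_2) = -z_1$, so $p_A \equiv 1$. For part (ii), reorder vertices so the connected component $C$ of $v_1$ comes first. Because $v_n\notin C$, the restriction of $Y$ to $C$ is the identity and $A$ decomposes blockwise, giving $f_A = (A_C - u_1 I_C)^{-1}_{11}$, a function of $u_1$ alone. Thus $\phi_A$ is a one-variable finite Blaschke product in $z_1$, so $p_A \in \mathbb{C}[z_1]$ has no zeros on $\overline{\mathbb{D}}$ and none on $\mathbb{T}^2$. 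The presence of at least one neighbor of $v_1$ inside $C$ makes $f_A$ a genuine non-polynomial rational function whose Blaschke transform cannot collapse to a pure monomial $\lambda z_1^k$, keeping $p_A$ non-constant.

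For part (iii) I first show $p_A$ depends on $z_2$. A direct computation yields $\partial f_A/\partial u_2 = \bigl[(A - u_Y)^{-1}_{1n}\bigr]^2$, and path-connectedness of $v_1$ to $v_n$ makes $(A - u_Y)^{-1}_{1n}$ a nonzero rational function---its formal walk-expansion contains the non-vanishing term involving $A^\ell_{1n}$, where $\ell$ is the distance from $v_1$ to $v_n$. Hence $\phi_A$ depends on $z_2$. A short check that $\phi_A(z_1,0) = \beta(f_A(u_1,i))\not\equiv 0$ (done by letting $u_1\to\infty$ to see $f_A(u_1,i)\to 0\neq i$, and using that $\beta(w)=0$ forces $w=i$) rules out the degenerate possibility that the entire $z_2$-dependence of $\phi_A$ sits in a front factor $z_2^k$, so $p_A$ itself must depend on $z_2$.

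The claim $p_A(-1,1)=0$ then combines three observations. First, at $(z_1,z_2) = (-1,1)$ the matrix whose determinant gives $\Delta^-$ has its last column identically zero: the entries $(1+z_1)A_{in}$ in rows $i<n$ and $-i(1-z_2)$ at position $(n,n)$ all vanish, so $\Delta^-(-1,1)=0$. Second, $\Delta^-$ has $z_2$-degree exactly $1$, so since $p_A$ has $z_2$-degree $\ge 1$ the common factor $g$ must lie in $\mathbb{C}[z_1]$. Third, $g(-1)\neq 0$: otherwise $(z_1+1)\mid g\mid \Delta^-$, contradicting the explicit evaluation $\Delta^-(-1,z_2) = -i(1-z_2)(-2i)^{n-1}$, which is nonzero for $z_2\neq 1$. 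Combining these, $p_A(-1,1) = \Delta^-(-1,1)/g(-1) = 0$. The main obstacle here is the gcd-control in the last step, as one has to rule out $g$ absorbing either the $z_2$-dependence or the $(z_1+1)$-vanishing of $\Delta^-$; both rulings hinge on reading off the behavior of $\Delta^-$ along $\{z_1=-1\}$ from the rescaled determinantal formula.
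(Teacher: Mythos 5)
Your determinantal setup is sound: the rank-one update identity does give $\phi_A = \Delta^+/\Delta^-$ with $\Delta^{\pm}$ of bidegree at most $(n-1,1)$, and parts (i) and (iii) go through. In fact your proof of (iii) is a genuinely different, and arguably cleaner, route than the paper's. The paper proves $p_A(-1,1)=0$ by writing $f_A(-1/z_1,z_2)$ as a ratio whose numerator and denominator each carry a factor vanishing at $(0,0)$ (Lemma \ref{lem:2}), and then rules out cancellation of those factors by showing that the identity $\gamma^TA_2^m\gamma = \eta^TA_3^m\eta$ for all $m$ leads to a path-counting contradiction (Proposition \ref{prop:pConn-genZero}). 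You instead establish $z_2$-dependence first via $\partial f_A/\partial u_2 = \bigl((A-u_Y)^{-1}_{1n}\bigr)^2$ and the walk expansion (note you implicitly use that $A-u_Y$ is complex symmetric, so its inverse has equal $(1,n)$ and $(n,1)$ entries), use the bound $\deg_{z_2}\Delta^-\le1$ together with $\deg_{z_2}p_A\ge1$ to force the cofactor $g=\Delta^-/p_A$ into $\mathbb{C}[z_1]$, and then kill $g(-1)$ by the block-triangular evaluation $\Delta^-(-1,z_2)=(-2i)^{n-1}(-i)(1-z_2)$. All three steps check out, and the degree count replaces the paper's most laborious combinatorial argument.

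The gap is in part (ii), in the claim that $p_A$ is not constant. You assert that because $f_A=(A_C-u_1I_C)^{-1}_{11}$ is ``a genuine non-polynomial rational function,'' its Blaschke transform ``cannot collapse to a pure monomial $\lambda z_1^k$.'' That reasoning is not valid as stated: $-1/u_1$ is also a non-polynomial rational function, and its transform is exactly the monomial $-z_1$ --- this is your own part (i). So non-polynomiality does not preclude collapse; what must be shown is that $f_A\ne\beta^{-1}\circ\bigl(\lambda(\cdot)^k\bigr)\circ\beta$ for every $\lambda\in\mathbb{T}$ and $k\in\mathbb{N}$. The paper does this by expanding both sides as Laurent series at $u_1=\infty$: matching the constant and $1/u_1$ coefficients forces $\lambda=-1$ and $k=1$, i.e.\ $f_A=-1/u_1$, and then the Neumann series coefficient $(A_C^2)_{11}=\deg(v_1)>0$ produces a nonzero $1/u_1^3$ term that $-1/u_1$ lacks. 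Some argument of this kind, or another concrete invariant separating $f_A$ from the Cayley conjugates of monomials, is needed to close part (ii).
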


Subsection \ref{sec:lemmas} includes some straightforward but useful lemmas. Then Theorem \ref{thm:1} is proved in Subsection \ref{subsec:thm1} as a sequence of shorter results.

Graphs illustrating the three cases in Theorem \ref{thm:1} are given in Figure \ref{fig:threegraph}. As an example of Theorem \ref{thm:1}, one can check that the polynomial $p$ from \eqref{eqn:firstp} satisfies $p(-1,1)=0$.  Additionally, note that any $p_A$ must satisfy exactly one of these conditions: $p_A$ is constant, $p_A \in \mathbb{C}[z_1]$ is not constant, and $p_A \in \mathbb{C}[z_1,z_2]$ has some $z_2$ dependence. Thus, Theorem \ref{thm:1} gives these double implications: $p_A$ is  constant if and only if $v_1$ is isolated, $p_A$ has only $z_1$ dependence if and only if $v_1$ is not isolated and not path connected to $v_n$, and $p_A$ has $z_2$ dependence if and only if $v_1$ and $v_n$ are path connected. 

\begin{figure}[ht!] 
\begin{minipage}{0.30\linewidth}
\begin{center}
\begin{tikzpicture}[
roundnode/.style={circle, draw=blue!60, fill=blue!5, very thick, minimum size=7mm},
squarednode/.style={rectangle, draw=red!60, fill=red!5, very thick, minimum size=7mm}, 
]
\node[roundnode]      (node1)                              {1};
\node[roundnode]        (node2)       [below=of node1] {2};
  \node[roundnode]    (node3)       [right=of node2] {3};
 \node[squarednode]       (node4)       [right=of node1] {4};
\draw[-] (node2.east) -- (node3.west);
\draw[-]  (node4.south) -- (node3.north);
\end{tikzpicture}
\end{center}
\end{minipage}%
\begin{minipage}{0.30\linewidth}
\begin{center}
\begin{tikzpicture}[
roundnode/.style={circle, draw=blue!60, fill=blue!5, very thick, minimum size=7mm},
squarednode/.style={rectangle, draw=red!60, fill=red!5, very thick, minimum size=7mm}, 
]
\node[roundnode]      (node1)                              {1};
\node[roundnode]        (node2)       [below=of node1] {2};
  \node[roundnode]    (node3)       [right=of node2] {3};
 \node[squarednode]       (node4)       [right=of node1] {4};
\draw[-] (node1.south) -- (node2.north);
\draw[-]  (node4.south) -- (node3.north);
\end{tikzpicture}
\end{center}
\end{minipage}%
\begin{minipage}{0.30\linewidth}
\begin{center}
\begin{tikzpicture}[
roundnode/.style={circle, draw=blue!60, fill=blue!5, very thick, minimum size=7mm},
squarednode/.style={rectangle, draw=red!60, fill=red!5, very thick, minimum size=7mm}, 
]
\node[roundnode]      (node1)                              {1};
\node[roundnode]        (node2)       [below=of node1] {2};
  \node[roundnode]    (node3)       [right=of node2] {3};
 \node[squarednode]       (node4)       [right=of node1] {4};
\draw[-] (node1.south) -- (node2.north);
\draw[-] (node2.east) -- (node3.west);
\draw[-]  (node4.west) -- (node1.east);
\draw[-]  (node4.south) -- (node3.north);
\end{tikzpicture}
\end{center}
\end{minipage}%

\caption{Examples of the three situations in Theorem \ref{thm:1} with $n=4$} 
\label{fig:threegraph}
\end{figure}
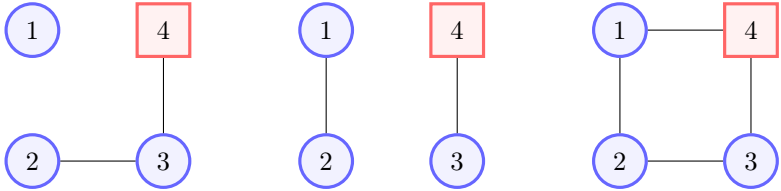

Because $p_A$ is the denominator of a rational inner function, if it has a zero on $\mathbb{T}^2$, then it must have both $z_1$ and $z_2$ dependence. Thus, Theorem \ref{thm:1} implies that if $p_A$ has any zero on $\mathbb{T}^2$, then it must possess some $z_2$ dependence and so, be zero at $(-1,1).$ Many polynomials in the class under consideration can have additional boundary zeros; for example, the polynomial in \eqref{eqn:firstp} also satisfies $p(1,1)=0.$ Classes of special special graphs and their polynomials were studied by the second author in \cite{H23}, where such additional boundary zeros were identified.

Although Theorem \ref{thm:1} is stated when $t=0$ (when the last vertex is colored red), change-of-variables arguments give the following:

\begin{corollary} \label{cor:pt} Let $G_A$ be an $n$-vertex graph and let $p^t_A$ be the associated stable polynomial with $t\in(0,1)$. If  $v_1$ and $v_n$ are path connected, then $p^t_A(-1,-1)=0$. If $v_1$ and $v_n$ are not path connected, then $p_A^t$ has no boundary zeros.
\end{corollary}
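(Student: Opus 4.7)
The plan is to reduce the corollary to Theorem \ref{thm:1} through a change-of-variables argument. The key observation is that $f_A^t(z_1, z_2) = f_A\bigl(z_1, tz_1 + (1-t)z_2\bigr)$ on $\mathbb{H}^2$, an identity immediate from the diagonal form of $z_1 Y + z_2(I - Y)$. Conjugating by $\beta, \beta^{-1}$ gives the bidisk identity
\[
\phi_A^t(w_1, w_2) = \phi_A\bigl(w_1, T_t(w_1, w_2)\bigr), \qquad T_t(w_1, w_2) = \frac{tw_1 + (1-t)w_2 + w_1 w_2}{1 + (1-t)w_1 + tw_2},
\]
where $T_t$ arises from pushing the convex combination $t\beta^{-1}(w_1) + (1-t)\beta^{-1}(w_2) \in \mathbb{H}$ back to $\mathbb{D}$ via $\beta$.

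For the path-connected case, I would note that $w_2 \mapsto T_t(w_1, w_2)$ is a disk automorphism for each fixed $w_1 \in \mathbb{D}$, so $\Phi_t(w_1, w_2) := (w_1, T_t(w_1, w_2))$ is a biholomorphism of $\mathbb{D}^2$ with inverse $\Phi_t^{-1}(w_1, w_2) = (w_1, S_t(w_1, w_2))$, where a direct computation yields
\[
S_t(w_1, w_2) = \frac{w_2 - tw_1 + (1-t)w_1 w_2}{(1-t) + w_1 - tw_2} =: \frac{M_t(w_1, w_2)}{E_t(w_1, w_2)}.
\]
Equivalently, $\phi_A(w_1, w_2) = \phi_A^t(w_1, S_t(w_1, w_2))$ as rational functions. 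Clearing the $E_t$ denominators and using $\gcd(p_A, q_A) = 1$ together with the rational inner function form $q_A^t = \lambda w_1^{k^t} w_2^{\ell^t} \tilde p_A^t$, one deduces the polynomial divisibility
\[
p_A(w_1, w_2) \mid E_t(w_1, w_2)^{\ell^t} \cdot R(w_1, w_2), \qquad R := E_t^{a_2^t} \cdot p_A^t(w_1, M_t/E_t),
\]
with $a_2^t = \deg_{w_2} p_A^t$. By Theorem \ref{thm:1}(iii), $p_A(-1, 1) = 0$. A direct computation gives $E_t(-1, 1) = -2t \ne 0$ for $t \in (0,1)$ and $M_t(-1, 1)/E_t(-1, 1) = 2t/(-2t) = -1$, so evaluating the polynomial expression for $R$ yields $R(-1, 1) = (-2t)^{a_2^t} \cdot p_A^t(-1, -1)$. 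The divisibility then forces $R(-1, 1) = 0$, giving $p_A^t(-1, -1) = 0$.

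For the not-path-connected case, the connected component of $v_1$ in $G_A$ does not contain $v_n$, so $Y$ acts as the identity on that component. Hence $A - z_1 Y - z_2(I - Y)$ is block-diagonal with the block containing $v_1$ involving only $z_1$, so $f_A^t$ and therefore $p_A^t$ depend only on $w_1$. Since $p_A^t$ is the reduced denominator of a one-variable rational inner function (a finite Blaschke product), it has no zeros on $\overline{\mathbb{D}}$, and in particular no zeros on $\mathbb{T}^2$. The main obstacle I expect is justifying the divisibility $p_A \mid E_t^{\ell^t}\,R$ at the polynomial level, which requires careful bookkeeping of the degree shifts $k^t, \ell^t, a_2^t$ and the common-factor cancellations inherent to the composition of rational inner functions; once that is established, the evaluation at $(-1, 1)$ delivers the vanishing at $(-1, -1)$ directly.
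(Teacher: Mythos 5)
Your proof is correct in substance but takes a genuinely different route from the paper's for the path-connected case. The paper deduces $p_A^t(-1,-1)=0$ from $p_A(-1,1)=0$ via a soft, level-set argument (Lemma \ref{lem:phi_seq} and Proposition \ref{thm:7}): it produces two sequences in $\mathbb{T}^2$ on which $\phi_A$ takes distinct unimodular values, pushes them through the affine change of variables on $\mathbb{H}^2$ (where $(-1,1)$ corresponds to the degenerate point $(\infty,0)$, which is exactly why sequences are used), and concludes that $\phi_A^t$ is discontinuous at $(-1,-1)$. You instead work entirely on the bidisk with the explicit M\"obius substitution $T_t$, whose formula you have computed correctly, and convert the identity $\phi_A(w_1,w_2)=\phi_A^t(w_1,S_t(w_1,w_2))$ into a polynomial divisibility statement; since $E_t(-1,1)=-2t\ne 0$ and $M_t(-1,1)/E_t(-1,1)=-1$, evaluation at $(-1,1)$ forces $p_A^t(-1,-1)=0$. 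The step you flag as the main obstacle does close routinely: clearing denominators with $E_t^{c}$ for $c=\ell^t+a_2^t$ gives a polynomial identity $q_A D=p_A N$ with $D=E_t^{\,c-a_2^t}R$, and coprimality of $q_A,p_A$ in the UFD $\mathbb{C}[w_1,w_2]$ yields $p_A\mid D$, after which the evaluation is exactly as you describe. One genuine (but non-load-bearing) error: $w_2\mapsto T_t(w_1,w_2)$ is \emph{not} a disk automorphism for fixed $w_1\in\mathbb{D}$, since on the half-plane side $z_2\mapsto tz_1+(1-t)z_2$ maps $\mathbb{H}$ onto the proper subset $\{\IM w>t\,\IM z_1\}$; so $\Phi_t$ is an injective self-map of $\mathbb{D}^2$, not a biholomorphism. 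Fortunately you only need the identity $\phi_A=\phi_A^t\circ\Phi_t^{-1}$ as an equality of rational functions, which holds by composing with the M\"obius inverse on a dense open set. Your approach buys something the paper's does not: it handles the degenerate point $\tau_1=-1$ algebraically and would recover Proposition \ref{thm:8} without any sequence machinery; the paper's approach is softer and avoids all degree bookkeeping. Your treatment of the non-path-connected case is identical to the paper's.
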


For example, using the graph in Figure \ref{fig:4vertexgraph} with $t \in (0,1)$ gives the stable  polynomial
\[ \begin{aligned} p_A^t(z_1, z_2) &= 4 +4z_1 -5tz_1-tz_1^2 -3tz_1^3+tz_1^4 -z_2 +5tz_2 -2 z_1 z_2 +t z_1 z_2 
\\ &\hspace{.2in} - 4 z_1^2 z_2 + 3 t z_1^2 z_2 -2 z_1^3 z_2 -t z_1^3 z_2 + z_1^4 z_2, \end{aligned}\]
which satisfies $p^t_A(-1,-1) =0.$ Corollary \ref{cor:pt} is proved in Subsection \ref{transition}. In that subsection, we also track how general boundary zeros of $p_A$ and $p_A^t$ are related, see Proposition \ref{thm:8}. 

The second natural question one should ask when given a class of stable polynomials with boundary zeros is: 
\begin{center} \emph{Question 2: How do the polynomials' zero sets behave near the guaranteed boundary zero(s)?}\end{center} 
For stable polynomials on $\mathbb{D}^2$, their zero set behavior is often described using a positive, even integer called \emph{contact order}, which measures how the function's zero set approaches $\mathbb{T}^2$. 
As proved in \cite{bps18, bps19a, bpsk22},
 the contact order $K$ governs the behavior of the associated rational inner function $\phi$'s unimodular level sets,  derivative integrability, and local non-tangential polynomial approximations. While most stable polynomials (for example the one in \eqref{eqn:firstp}) have contact order $K=2$ at their boundary zeros, this is not always true. 

In Section \ref{sec:CO}, we study the contact order of stable polynomials $p_A^t$ constructed from graphs. Subsection \ref{subsec:COdef} gives important information about defining, studying, and computing contact order. Meanwhile, Subsection \ref{subsec:COtvalues} notes that for \emph{most} boundary zeros of $p_A$, the associated zero of $p_A^t$ has the same contact order; this appears as Proposition \ref{prop:COt}.

In Subsection \ref{subsec:COpaths},  we consider polynomials constructed from \emph{path graphs}, which are graphs where each vertex is only connected to the following vertex. For example, the graph in Figure \ref{fig:4vertexgraph} would be a path graph if we removed the edge between $v_1$ and $v_4.$ We prove the following:

\begin{theorem} \label{thm:COpaths} Let $G_A$ be an $n$-vertex path graph. Then $p_A$ has contact order $K=2(n-1)$ at its boundary zero $(-1,1)$.
\end{theorem}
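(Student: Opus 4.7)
The plan is to write $p_A$ explicitly in the form $A_1(z_1) - z_2\,B_1(z_1)$, reduce the contact-order calculation at $(-1,1)$ to a one-variable vanishing condition, and prove that condition using the Chebyshev-type recursion coming from the tridiagonal structure of the $n$-path.

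For the $n$-path, $M := A - z_Y$ is tridiagonal with diagonal $(-z_1, \ldots, -z_1, -z_2)$ and unit super/sub-diagonals. Iterated cofactor expansion along the last row yields
\[
\det(M) = -D_{n-2}(w_1) - w_2\,D_{n-1}(w_1), \qquad (M^{-1})_{11}\,\det(M) = -D_{n-3}(w_1) - w_2\,D_{n-2}(w_1),
\]
where $D_k$ is defined by $D_k(w) = -w\,D_{k-1}(w) - D_{k-2}(w)$, $D_0 = 1$, $D_1 = -w$. Substituting $w_i = i(1-z_i)/(1+z_i)$ and clearing $(1+z_1)^{n-1}(1+z_2)$ gives
\[
p_A(z_1, z_2) = A_1(z_1) - z_2\,B_1(z_1)
\]
of bidegree $(n-1, 1)$. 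Setting $\hat D_k(z) := i^{-k}(1+z)^k D_k(i(1-z)/(1+z))$ produces a real polynomial obeying the clean recursion $\hat D_k = (z-1)\hat D_{k-1} + (1+z)^2\hat D_{k-2}$ with $\hat D_0 = 1$, $\hat D_1 = z-1$; in particular $\hat D_k(-1) = (-2)^k$. In this notation, up to a common nonzero real constant,
\[
B_1(z) = (z-1)\hat D_{n-2}(z), \qquad A_1(z) = -(3+z)\hat D_{n-2}(z) + 2(1+z)^2\hat D_{n-3}(z).
\]

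Because $p_A$ has $z_2$-degree one, for each $z_1 \in \mathbb{T}$ the rational inner function $\phi_A(z_1,\cdot) = \lambda\,\tilde p_A(z_1,\cdot)/p_A(z_1,\cdot)$ is a Blaschke product of degree $1$ in $z_2$ whose unique zero is $a(z_1) = \tilde B_1(z_1)/\tilde A_1(z_1)$. Since $A_1, B_1 \in \mathbb{R}[z_1]$ and $|\tilde P| = |P|$ on $\mathbb{T}$, we get
\[
1 - |a(z_1)|^2 = \frac{|A_1(z_1)|^2 - |B_1(z_1)|^2}{|A_1(z_1)|^2} \qquad (z_1 \in \mathbb{T}).
\]
By the contact-order framework of Subsection~\ref{subsec:COdef}, $K$ equals the order of vanishing of the numerator at $z_1 = -1$ on $\mathbb{T}$, which equals the order of vanishing at $z = -1$ of the single-variable polynomial
\[
G(z) := A_1(z)\tilde A_1(z) - B_1(z)\tilde B_1(z) = z^{n-1}\bigl[A_1(z)A_1(1/z) - B_1(z)B_1(1/z)\bigr].
\]
Since $\deg G \le 2(n-1)$, establishing $(z+1)^{2(n-1)} \mid G$ forces $G(z) = c(z+1)^{2(n-1)}$ for some $c$, and together with the separate check $c \ne 0$ this gives $K = 2(n-1)$.

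To prove the divisibility I would induct on $n$. The decomposition $G = (A_1 - B_1)\tilde A_1 + B_1(\tilde A_1 - \tilde B_1)$ combined with $\tilde A_1 - \tilde B_1 = \widetilde{A_1 - B_1}$ lets one factor out the power of $(z+1)$ dividing $A_1 - B_1$; the recursion for $\hat D_k$ together with $\hat D_k(-1) = (-2)^k$ should then propagate the divisibility claim cleanly from $n$ to $n+1$. The base cases $n = 2, 3, 4$ verify directly, giving $G$ equal to nonzero multiples of $(z+1)^2$, $(z+1)^4$, and $(z+1)^6$ respectively. Nondegeneracy $c \ne 0$ follows either from the Fibonacci growth of $\hat D_k$'s leading coefficient (which controls the top coefficient of $G$) or by evaluating $G(0) \ne 0$. \emph{The main obstacle} is making the induction rigorous: the $z_1$-degree of $A_1^{(n)}$ is generically $n-1$ but drops to $n-2$ when $n = 4$ because of a Fibonacci coincidence that cancels the leading coefficient, so the inductive hypothesis must simultaneously track degrees, leading coefficients, and $(z+1)$-adic valuations of $A_1, B_1$ and their differences. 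A viable alternative is to compute $|A_1(e^{i\psi})|^2 - |B_1(e^{i\psi})|^2$ directly as a trigonometric polynomial and show, extending the $n = 3$ calculation, that it equals a nonzero multiple of $(1 + \cos\psi)^{n-1}$, which forces $G = c(z+1)^{2(n-1)}$ noninductively.
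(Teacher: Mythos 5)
Your reduction is sound and, despite the change of coordinates, runs parallel to the paper's: both exploit the tridiagonal (Chebyshev-type) recursion for the principal minors of $A - z_Y$, write the relevant quantity as a combination of consecutive characteristic polynomials of sub-paths, and reduce the contact order at $(-1,1)$ to the order of vanishing of a single one-variable expression --- in your case $|A_1|^2 - |B_1|^2$ on $\mathbb{T}$ at $z_1=-1$, in the paper's case $\IM\bigl(r_1(x)\overline{r_2(x)}\bigr)$ at $x=0$ on the real line; these are the same computation under $\beta$. The degree bound $\deg G \le 2(n-1)$ correctly gives the upper bound $K \le 2(n-1)$ once $G \not\equiv 0$.

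The genuine gap is that the lower bound --- the divisibility $(z+1)^{2(n-1)} \mid G$, which is the entire content of the theorem --- is never proved. You write ``To prove the divisibility I would induct on $n$,'' identify the obstruction yourself (the degree and leading-coefficient bookkeeping breaks at $n=4$, and the inductive hypothesis would have to track $(z+1)$-adic valuations of $A_1$, $B_1$, and $A_1 - B_1$ simultaneously), and then offer an alternative trigonometric computation that is also not carried out. In the paper this step is exactly where the work lies: one first derives explicit binomial-coefficient formulas for the polynomials $h(k,z_1) = z_1^k\det(A_k - \tfrac{1}{z_1}I)$ by induction on the recursion, and then proves that in $x^2\bigl(h(n-2,x)^2 - h(n-1,x)h(n-3,x)\bigr)$ every coefficient below the top one vanishes; that cancellation rests on a separate, nontrivial binomial identity (Lemma~\ref{lem:bionomial}), itself proved by a two-parameter induction with Pascal's rule. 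Your proposal contains no analogue of that identity, so as written it establishes only $K \le 2(n-1)$, not equality. (Two smaller issues in the same vein: your closed forms for $A_1$ and $B_1$ invoke $\hat D_{n-3}$, which is undefined for $n=2$, and the ``up to a common nonzero real constant'' hedge means the base cases $n=2,3,4$ are not actually pinned down.)
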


Additionally, as part of the proof, we obtain a concrete formula for $f_A$. This result also aligns with the work of Pascoe in \cite{Pascoe18}. Specifically, the final example in that paper shows that rational functions \eqref{eqnft} constructed with path graphs and  $t\in (0, 1)$ have particular non-tangential regularity at infinity. Later work in \cite{bps18, bpsk22} showed that this non-tangential regularity measures the contact order (in this case, $K=2n$)  of the associated stable polynomial at $(-1,-1)$. So, a $t\ne 0$ version of Theorem \ref{thm:COpaths} follows from earlier work of Pascoe and collaborators. Our Theorem \ref{thm:COpaths} shows that a similar result (with a drop in contact order) holds when $t=0$ and aligns with what appeared in examples in \cite{bps18, bpsk22}.

For path graphs, $n-1$ is also the length of the shortest path connecting $v_1$ and $v_n$.  We conjecture that this quantity more generally controls the contact order of $p^t_A$ at the guaranteed boundary zero $(-1,1)$ for $t=0$ and $(-1,-1)$ for $t \in (0,1).$

\begin{conjecture} \label{con:CO} Let $G_A$ be an $n$-vertex graph and let $\ell$ denote the length of the shortest path connecting vertices $v_1$ and $v_n$ in $G_A$. If $t=0$, then $p_A$ has contact order $2\ell$ at $(-1,1)$ and if $t \in (0,1)$, then $p_A^t$ has contact order $2\ell+2$ at $(-1,-1).$
\end{conjecture}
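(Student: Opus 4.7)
The plan is to extend the framework behind Theorem~\ref{thm:COpaths} by combining Cramer's rule with a combinatorial analysis of the resulting determinants. Starting from
\[
f_A^t(z_1,z_2) \;=\; \frac{\det\bigl((A-z_Y)_{\widehat 1,\widehat 1}\bigr)}{\det(A-z_Y)},
\]
where $(A-z_Y)_{\widehat 1,\widehat 1}$ denotes the submatrix obtained by deleting row and column $1$, I would expand both determinants via the Leibniz formula. Since the off-diagonal entries of $A-z_Y$ are just adjacency entries of $G_A$, a permutation contributes a nonzero term only if each of its cycles of length $\ge 2$ traces a cycle in $G_A$. The resulting formula expresses each determinant as a signed sum over disjoint cycle collections in $G_A$, with fixed-point diagonals supplying the $z_1$ and $z_2$ dependence.

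The next step is to transfer this analysis to the bidisk via the conformal map $\beta$ and apply the contact-order framework from Subsection~\ref{subsec:COdef}. For $t=0$, the boundary zero $(-1,1)$ corresponds in upper-half-plane coordinates to $(z_1,z_2)=(\infty,0)$; for $t\in(0,1)$, the boundary zero $(-1,-1)$ corresponds to $(\infty,\infty)$. In either case, the contact order is encoded in the precise rate at which $f_A^t$ approaches $\mathbb{R}$ as $(z_1,z_2)$ is sent nontangentially toward the limit point. The key combinatorial claim is that the dominant large-$z_1$ asymptotics of the numerator and denominator of $f_A^t$ are controlled by walks from $v_1$ to $v_n$ in $G_A$: a walk of length $k$ gives rise via cofactor expansion to a term of order $z_1^{n-1-k}$, and minimum-length walks (of length $\ell$) control the leading asymptotic, yielding contact order $2\ell$. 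The $+2$ discrepancy when $t\in(0,1)$ emerges from replacing the last diagonal entry $-z_2$ by $-(tz_1+(1-t)z_2)$, which shifts the asymptotic by one unit in $z_1$ on each side and accounts for the move of the boundary zero from $(-1,1)$ to $(-1,-1)$, mirroring the path-graph situation of \cite{Pascoe18}.

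The lower bound on contact order is the easier half: no walk from $v_1$ to $v_n$ has length smaller than $\ell$, so no higher power of $z_1$ can appear in the leading cofactor. The main obstacle is the matching upper bound, which requires showing that the contributions of all length-$\ell$ walks do not cancel in the Leibniz sum. Because distinct minimum-length walks can correspond to permutations of different cycle types and hence opposite signs, nonvanishing of the leading coefficient is not automatic. The most promising route is a Schur-complement reduction that eliminates every vertex not lying on some shortest $v_1$-$v_n$ path: the reduced problem should be equivalent to a ``thickened'' path graph, to which the computation in Theorem~\ref{thm:COpaths} applies with only minor modifications. Verifying that this reduction preserves the boundary-zero order and that the relevant positivity of the leading coefficient is retained will be the technical heart of the argument, and is where I expect any additional graph-theoretic hypotheses (or subtleties) to appear.
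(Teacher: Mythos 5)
The first thing to say is that the statement you are attempting is presented in the paper as Conjecture \ref{con:CO}, not as a theorem: the paper gives no proof of it, only supporting evidence (the path-graph case, Theorem \ref{thm:COpaths}, and the three graph modifications of Theorem \ref{thm:CO2}), and it explicitly records that the $t=0$ half was resolved only afterwards by other authors in \cite{ATTT23}, while the $t\neq 0$ half appears to still be open. So there is no proof in the paper to compare yours against, and a complete argument here would be new mathematics rather than a reconstruction.

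Your outline is consistent with the paper's framework --- the identity $f_A^t=\det(\overline{A-z_Y})/\det(A-z_Y)$, the half-plane coordinates $(\infty,0)$ and $(\infty,\infty)$ for the two boundary zeros, and the heuristic that shortest $v_1$--$v_n$ walks govern the leading asymptotics are all correct and match what the paper does in its partial results --- but the decisive step is missing, and you say so yourself. The lower bound is indeed the easy half; the entire difficulty of the conjecture is concentrated in the non-cancellation claim. Distinct shortest walks, and more generally the distinct cycle covers appearing in the Leibniz expansion, carry signs, and nothing in your sketch rules out that the coefficient controlling the contact order vanishes, which would push the contact order strictly above $2\ell$. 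The proposed remedy --- a Schur complement eliminating every vertex off a shortest $v_1$--$v_n$ path, reducing to a ``thickened path graph'' --- is not carried out, and it is not clear that it can be in the form stated: the Schur complement of $A-z_Y$ over the eliminated vertices is a rational matrix function of $z_1$, not the adjacency matrix of any graph, and the induced structure on the shortest-path vertices need not be a path (two disjoint geodesics already give a theta-shaped graph), so the computation of Theorem \ref{thm:COpaths}, which rests on the exact tridiagonal recursion $G(n,z_1)=-z_1G(n-1,z_1)-G(n-2,z_1)$, does not transfer ``with only minor modifications.'' The paper's own non-cancellation device in Proposition \ref{prop:pConn-genZero} (the strict inequality $\gamma^TA_2^{2M}\gamma>\eta^TA_3^{2M}\eta$, obtained from nonnegativity of entries of powers of adjacency matrices) shows the kind of positivity one would need, but it is only available there for squared quantities at even powers; producing the analogous strict positivity for the contact-order coefficient is exactly what remains open. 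As it stands, your proposal is a plausible research plan with its hardest step unaddressed, not a proof.
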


To investigate this conjecture, in Subsection \ref{subsec:GM}, we study how the contact order of $p_A$ at $(-1,1)$ changes when we append additional vertices and edges to the graph $G_A$. The results are summarized in the following theorem:

\begin{theorem} \label{thm:CO2} Let $G_A$ be an $n$-vertex graph such that $p_A$ has contact order $K$ at $(-1,1).$ 
\begin{itemize}
    \item[i.] Create $G_{\widehat{A}}$ from $G_A$ by adding a vertex $v_{n+1}$ that is only connected directly to $v_n.$ Then $p_{\widehat{A}}$ has contact order $K+2$ at $(-1,1).$
    \item[ii.] Create $G_{\widetilde{A}}$ from $G_A$ by adding a vertex $v_{0}$ that is only connected directly to $v_1$ (and then renumbering to $v_1 ,\dots, v_{n+1}$). Then $p_{\widetilde{A}}$ has contact order $K+2$ at $(-1,1).$
      \item[iii.] Assume that $v_n$ is only connected to the rest of $G_A$ by an edge to $v_{n-1}.$  For $m \ge n+1$, create an $m$-vertex graph $G_{\breve{A}}$ from $G_A$ by  attaching an $(m-n)$-vertex graph $G'$ to $G_A$ via an edge to $v_{n-1}$  and then relabeling $v_n$ to $v_m$. Then $p_{\breve{A}}$ has contact order $K$ at $(-1,1).$   
\end{itemize}
\end{theorem}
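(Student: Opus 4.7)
The plan is to address each part by expressing the modified function $f_{\text{new}}$ in closed form in terms of $f_A$ (and a small amount of auxiliary data from the added vertices), pushing this through the conformal map $\beta$ to obtain a relation between $p_{\text{new}}$ and $p_A$, and then reading off the contact-order change using the machinery developed in Subsection \ref{subsec:COdef}. The underlying tool is always the same: since $f_A = [(A - z_Y)^{-1}]_{11}$, each graph modification is a structured perturbation of $A - z_Y$, and Schur-complement / Sherman--Morrison identities give closed-form expressions for the new $f$ in terms of the old.

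For part (iii), the structural hypothesis on $v_n$ is exactly what makes the argument work. Partitioning $\breve A - z_{\breve Y}$ into the block indexed by $\{v_1,\dots,v_{n-1},v_m\}$ (which equals $A - z_Y$), the block on the $G'$-vertices, and their coupling, and using that $v_m$ is adjacent only to $v_{n-1}$ and $G'$ is attached only at $v_{n-1}$, one Schur-eliminates the $G'$-block to obtain $A - z_Y$ perturbed by the rank-one matrix $-s(z_1)\, e_{n-1} e_{n-1}^{T}$, where $s(z_1) = [(B - z_1 I)^{-1}]_{jj}$ for $B$ the adjacency matrix of $G'$ and $v_j$ its vertex incident to $v_{n-1}$. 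Since $s(z_1) = O(1/z_1)$ at $z_1 = \infty$, this perturbation vanishes in the $w$-coordinates at $w_1 = -1$, so it is strictly higher order than the leading local behavior of $p_A$ at $(-1,1)$ and therefore leaves the contact order unchanged.

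For parts (i) and (ii), the modifications change both $A$ and $Y$, but direct Schur-complement computations nevertheless yield clean identities. For part (i), setting $g_{ij}(z_1) := [(A - z_1 I_n)^{-1}]_{ij}$,
\[
f_{\widehat A}(z_1,z_2) \;=\; g_{11}(z_1) - \frac{g_{1n}(z_1)^{2}}{z_2 + g_{nn}(z_1)}, \qquad f_A(z_1,z_2) \;=\; g_{11}(z_1) - \frac{(z_1 - z_2)\,g_{1n}(z_1)^{2}}{1 + (z_1 - z_2)\,g_{nn}(z_1)}.
\]
For part (ii), Schur-eliminating the $(A-z_Y)$-block in $\widetilde A - z_{\widetilde Y} = \bigl(\begin{smallmatrix} -z_1 & e_1^{T} \\ e_1 & A - z_Y \end{smallmatrix}\bigr)$ yields the especially clean relation
\[
f_{\widetilde A}(z_1,z_2) \;=\; -\frac{1}{z_1 + f_A(z_1, z_2)}.
\]
In both cases, substituting $z_j = \beta^{-1}(w_j)$ and clearing common factors isolates an explicit extra factor separating $p_{\text{new}}$ from $p_A$, which one shows vanishes to order exactly two at $(-1,1)$, producing the claimed $K \mapsto K+2$ shift.

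The principal obstacle is the local analysis in parts (i) and (ii): confirming that the extra factor contributes \emph{exactly} two additional orders of vanishing, neither more nor less. This requires carefully tracking orders of vanishing and leading coefficients of $g_{11}$, $g_{1n}$, $g_{nn}$ at $z_1 = \infty$ (in particular, $g_{1n}$ vanishes to order $\ell + 1$ where $\ell$ is the shortest $v_1$-to-$v_n$ path length in $G_A$), and checking that no accidental cancellation with the numerator/denominator structure of $p_A$ occurs after the conformal substitution. In part (iii), the analogous but milder concern is to rule out that the $O(1/z_1)$ perturbation could combine with the existing local data of $p_A$ to shift the dominant term at $(-1,1)$; the hypothesis that $v_n$ connects to the rest of $G_A$ only through $v_{n-1}$ is precisely what prevents this.
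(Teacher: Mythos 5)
Your closed-form identities are correct (I checked $f_{\widehat A}=g_{11}-g_{1n}^2/(z_2+g_{nn})$ and $f_{\widetilde A}=-1/(z_1+f_A)$, and the Schur elimination in (iii)), and your overall strategy --- reduce everything to the order of vanishing of $\IM\big(r_1(x)\overline{r_2(x)}\big)$ at $0$ as in Remark \ref{rem:cor} --- is the same as the paper's. But the proposal stops short of a proof in two places. For parts (i) and (ii) you correctly flag the decisive step, showing the shift is \emph{exactly} two, but then propose to handle it by tracking leading coefficients of $g_{11},g_{1n},g_{nn}$ at infinity and ruling out ``accidental cancellation.'' No such asymptotic bookkeeping is needed, and the proof is not complete without the actual computation: pushing your own identity $f_{\widetilde A}=-1/(z_1+f_A)$ through $z_1\mapsto -1/z_1$ gives $\tilde q_1=z_1q_2$ and $\tilde q_2=q_2-z_1q_1$, and writing $q_j=a_j+z_2b_j$ with $a_j,b_j\in\mathbb{R}[z_1]$, a two-line computation yields the \emph{exact} algebraic identity $\IM\big(\tilde r_1(x)\overline{\tilde r_2(x)}\big)=x^2\,\IM\big(r_1(x)\overline{r_2(x)}\big)$; there is nothing to estimate. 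The paper's proofs of Theorems \ref{thmCO:i} and \ref{thmCO:ii} are exactly such cancellations, carried out via Laplace expansions of $\det(\widehat A-\widehat z_Y)$ rather than Sherman--Morrison. One must also verify that the new numerator and denominator share no factor of $z_1$ or factor involving $z_2$ before invoking Remark \ref{rem:cor}; you do not address this.

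The more serious problem is part (iii). The inference ``$s(z_1)=O(1/z_1)$ at infinity, so the perturbation vanishes at $w_1=-1$ and is strictly higher order than the leading local behavior of $p_A$, hence the contact order is unchanged'' is a non sequitur. In the coordinate $x=-1/z_1$ the perturbation vanishes at $x=0$ only to order one or two, whereas the contact order $K$ can be arbitrarily large (for instance $K=2(n-1)$ for path graphs by Theorem \ref{thm:COpaths}); a perturbation of low vanishing order can in principle destroy a high-order tangency, and indeed your own parts (i) and (ii) are examples of ``small'' modifications that do change $K$. What actually makes (iii) work is a multiplicative structure: as in the paper's Theorem \ref{thmCO:iii}, one shows $\IM\big(\breve r_1(x)\overline{\breve r_2(x)}\big)=\IM\big(r_1(x)\overline{r_2(x)}\big)\cdot\big(S_1(x)S_4(x)-S_2(x)S_3(x)\big)$, where the second factor is a unit at $x=0$ because $S_1(0),S_4(0)\ne 0$ while $S_2(0)=0$ --- facts that encode precisely the hypothesis that $v_m$ reaches the rest of the graph only through $v_{n-1}$. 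You need to establish an identity of this form (or an equivalent exact statement); the ``higher-order perturbation'' heuristic does not substitute for it.
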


Note that all three situations in Theorem \ref{thm:CO2} support Conjecture \ref{con:CO}. In the first two situations, we extend the length of the shortest path connecting the first and last vertices by $1$; thus, it makes sense that the contact order increases by $2$. In the last situation, the length of the shortest path connecting the first and last vertices does not change, so we would expect the contact order to remain the same. Theorem \ref{thm:CO2} is proved as three separate results (Theorem \ref{thmCO:i}, Theorem \ref{thmCO:ii}, Theorem \ref{thmCO:iii}), where Theorem \ref{thmCO:iii} also allows the option of constructing $G_{\breve{A}}$ by attaching $G'$ to both $v_{n-1}$ and $v_n.$

It is worth noting that since the completion of this work, the $t=0$ part of Conjecture \ref{con:CO} has been resolved by  Adlin, Thai, Tiscarenoby, and Tully-Doyle using very interesting and efficient methods in the new preprint \cite{ATTT23}. The $t\ne 0$ case appears to still be open.

Finally, Section \ref{sec:open} contains a short list of open questions related to stable polynomials and graphs. Of particular interest to the authors are investigations that go in the other direction: given a stable polynomial or Cauchy transform function  associated to a colored graph, what can one say about the original graph? To the best of our knowledge, little in that direction is currently known.

\section*{Acknowledgments}
Special thanks to Ryan Tully-Doyle and J.E. Pascoe for very useful insights and discussions during the writing of this paper. The authors Kelly Bickel and Yang Hong were partially supported by the National Science Foundation DMS grant \#2000088 during these research activities. Additional support was provided to Hong by Bucknell University. Many results in Section $2$ originally appeared in Hong's Bucknell honors thesis \cite{H23}.

\section{Existence of Boundary Zeros} \label{sec:bdyzero}
\subsection{Key Lemmas}\label{sec:lemmas}
In this section, we record some facts about inverses, determinants, and the functions $f^t_A$ defined in \eqref{eqnft} that will be used later.

\begin{lemma}\label{lem:one-one elem of inverse matrix}
If $A$ is a square matrix and $\overline{A}$ is the matrix obtained from $A$ by removing its first row and column, then the $(1,1)$ entry of $A^{-1}$ is given by \[(A^{-1})_{11} = \frac{\det{\overline{A}}}{\det{A}}.\]
\end{lemma}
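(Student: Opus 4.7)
The plan is to deduce this directly from the classical cofactor (adjugate) formula for the matrix inverse. Recall that for any invertible square matrix $A$, one has $A^{-1} = \frac{1}{\det A} \operatorname{adj}(A)$, where $\operatorname{adj}(A)$ is the transpose of the cofactor matrix. In particular, the $(1,1)$ entry of $A^{-1}$ equals $\frac{C_{11}}{\det A}$, where $C_{11} = (-1)^{1+1} M_{11}$ is the $(1,1)$ cofactor and $M_{11}$ is the corresponding minor, namely the determinant of the submatrix of $A$ obtained by deleting the first row and first column. By the definition in the statement, this submatrix is precisely $\overline{A}$, so $M_{11} = \det \overline{A}$ and the formula follows.

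If one prefers to avoid quoting the adjugate formula directly, the same identity drops out of Cramer's rule applied to the system $A x = e_1$: the first coordinate $x_1$ of the solution equals $(A^{-1})_{11}$, and Cramer's rule expresses it as $\frac{\det A_1}{\det A}$, where $A_1$ is $A$ with its first column replaced by $e_1$. Expanding $\det A_1$ along the first column leaves exactly $\det \overline{A}$, giving the claim.

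The statement implicitly assumes $A$ is invertible so that $A^{-1}$ makes sense; in the applications of this lemma within the paper, $A$ will be of the form $A - z_Y$ for $z \in \mathbb{H}^2$, which is invertible on the relevant domain. There is no serious obstacle here: the result is essentially a one-line consequence of standard linear algebra, and the only thing to be careful about is the sign $(-1)^{1+1} = 1$ that makes the cofactor equal to the minor without a sign flip.
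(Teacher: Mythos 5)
Your proposal is correct and matches the paper's approach exactly: the paper also derives this as an immediate consequence of the adjugate (cofactor) formula, noting that the $(1,1)$ cofactor carries no sign. The Cramer's rule alternative and the remark about invertibility are fine but add nothing beyond the paper's one-line justification.
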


This is an immediate consequence of the adjugate formula for $A^{-1}.$ We will also require the following. 

\begin{lemma} \label{lem:Detmatrix} Let $M$ be a square matrix, partitioned as 
\[
M=
\left[
\begin{array}{c|c}
A  & B \\
\hline
C & D
\end{array}
\right],\]
where $A$ and $D$ are square matrices and $A^{-1}$ and $D^{-1}$ exist. Then 
\begin{equation}\label{det of partitioned matrix}
    \det(M) = \det(A)\det(D - CA^{-1}B) = \det(D)\det(A - BD^{-1}C).
\end{equation}
\end{lemma}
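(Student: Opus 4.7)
The plan is to prove the identity by a block LU factorization of $M$, which reduces the determinant of $M$ to a product of determinants of triangular blocks.

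First I would exploit the invertibility of $A$ to write the factorization
\[
\begin{bmatrix} A & B \\ C & D \end{bmatrix}
= \begin{bmatrix} I & 0 \\ CA^{-1} & I \end{bmatrix}
\begin{bmatrix} A & B \\ 0 & D - CA^{-1}B \end{bmatrix}.
\]
A direct block multiplication verifies this equality. Since the first factor is block lower triangular with identity blocks on the diagonal, its determinant is $1$; since the second factor is block upper triangular, its determinant equals the product of the determinants of its diagonal blocks, namely $\det(A)\det(D - CA^{-1}B)$. Multiplicativity of the determinant then gives the first formula
\[
\det(M) = \det(A)\det(D - CA^{-1}B).
\]

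For the second formula, I would use the analogous factorization that exploits the invertibility of $D$:
\[
\begin{bmatrix} A & B \\ C & D \end{bmatrix}
= \begin{bmatrix} A - BD^{-1}C & B \\ 0 & D \end{bmatrix}
\begin{bmatrix} I & 0 \\ D^{-1}C & I \end{bmatrix},
\]
which again is verified by direct block multiplication. Taking determinants, the second factor contributes $1$ and the first contributes $\det(A - BD^{-1}C)\det(D)$, yielding the second identity. Alternatively, one can obtain it from the first by conjugating with the permutation that swaps the two block rows (and columns), which has determinant $\pm 1$ squared $= 1$, and applying the first formula to the swapped matrix.

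No step is a real obstacle here: the only things to check are the two block identities, each of which is a one-line block multiplication, and the fact that a block triangular matrix with square diagonal blocks has determinant equal to the product of the determinants of those blocks. The latter is standard and follows, for example, from cofactor expansion along the zero block or from the Leibniz formula restricted to permutations that respect the block structure.
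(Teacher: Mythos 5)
Your proof is correct. The paper does not actually prove this lemma; it simply remarks that the identity is well known and cites Garcia--Horn for it. Your block LU factorization argument (together with its mirror image using $D^{-1}$) is the standard self-contained proof of exactly this statement: both block multiplications check out, the unitriangular factors have determinant $1$, and the block-triangular determinant formula you invoke is elementary. The alternative derivation of the second identity by conjugating with the block-swap permutation is also valid. Nothing is missing.
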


This is well known and can be found for example in (50) in Example 47 of \cite{Garcia2020}. We lastly record the following fact about the functions $f^t_A$ from \eqref{eqnft}:

\begin{lemma} \label{lem:1} Let $G_A$ be an $n$-vertex graph with $n \ge 3$. Fix $1 < i,j <n$ and let $G_{A'}$ be the graph obtained from $G_A$ by switching $v_i$ and $v_j$. Then for $t \in [0,1),$ $f^t_A = f^t_{A'}$.
\end{lemma}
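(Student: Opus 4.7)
The plan is to realize the switch of $v_i$ and $v_j$ as a similarity transformation of $A-z_Y$ by a permutation matrix that fixes the target vector $e_1$, so that the $(1,1)$-entry of the inverse is unchanged. Let $P$ denote the permutation matrix that transposes coordinates $i$ and $j$. Then $A' = PAP^{-1}$, since permuting vertices relabels the rows and columns of the adjacency matrix by the same permutation.

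Next I would record two elementary invariances. First, because $1 < i, j < n$, the permutation $P$ acts trivially on the first and $n$-th basis vectors; in particular $Pe_1 = e_1$. Second, the coloring matrix $Y = \mathrm{diag}(1,\dots,1,t)$ has its $i$-th and $j$-th diagonal entries both equal to $1$ (since $i, j < n$), so swapping these entries leaves $Y$ unchanged, giving $PYP^{-1} = Y$ and hence also $P(I-Y)P^{-1} = I-Y$. Combining these shows $P z_Y P^{-1} = z_Y$ for all $(z_1, z_2)$.

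Putting the pieces together, I get
\[ A' - z_Y \;=\; PAP^{-1} - P z_Y P^{-1} \;=\; P(A - z_Y)P^{-1}, \]
so $(A' - z_Y)^{-1} = P(A-z_Y)^{-1} P^{-1}$. Evaluating the $(1,1)$-entry using $Pe_1 = e_1$ then yields
\[ f^t_{A'}(z_1,z_2) \;=\; e_1^\star P (A-z_Y)^{-1} P^{-1} e_1 \;=\; e_1^\star (A-z_Y)^{-1} e_1 \;=\; f^t_A(z_1,z_2), \]
as desired. There is no real obstacle here: the only subtle point is observing that the hypothesis $i, j < n$ ensures both swapped vertices share the same color, which is exactly what makes $Y$ invariant under the permutation; without this the conjugation would alter $z_Y$ and the argument would fail.
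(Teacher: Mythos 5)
Your proof is correct and follows essentially the same route as the paper's: both realize the vertex swap as conjugation by the transposition permutation matrix, observe that this matrix commutes with $z_Y$ (because $1<i,j<n$ forces the swapped diagonal entries of $Y$ to agree) and fixes $e_1$, and conclude the $(1,1)$-entry of the inverse is unchanged. Your write-up is, if anything, slightly more explicit than the paper's about why $z_Y$ is invariant.
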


\begin{proof} Let $U$ be the unitary matrix obtained from the $n \times n$ identity matrix $I$ by switching its $i^{th}$ and $j^{th}$ columns and note that $U^{-1} =U$. Then the adjacency matrices satisfy $A' = U A U$ and the formula for $z_Y$ gives $z_Y = U z_YU$. Thus,
 \[ (A'-z_Y)^{-1} =(UA U-z_Y)^{-1}= U (A - U z_Y U)^{-1} U = U (A - z_Y)^{-1} U.\]
As the application of $U$ does not alter the $(1,1)$ entry of $(A - z_Y)^{-1}$, \eqref{eqnft} implies that $f^t_A =f^t_{A'}.$
\end{proof}

\subsection{Proof of Theorem \ref{thm:1}} \label{subsec:thm1}
This section provides the proof of Theorem \ref{thm:1}. We first establish parts (i) and (ii) as the following proposition. The functions $\beta, \beta^{-1}, f_A, \phi_A,$ and  $p_A$ are defined as in the introduction with $t=0$.

\begin{proposition} \label{prop:Paz1} If $v_1$ and $v_n$ are not path connected, then $p_A \in \mathbb{C}[z_1]$. Furthermore, if $v_1$ is isolated, then $p_A$ is constant and if $v_1$ is not isolated, then $p_A$ is not constant.
\end{proposition}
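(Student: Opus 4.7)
The plan is to handle the three sub-claims in two stages. For the first assertion that $p_A \in \mathbb{C}[z_1]$, I would invoke Lemma \ref{lem:1} to permute the internal vertices $v_2, \dots, v_{n-1}$ so that the connected component of $v_1$, of size $k+1$, occupies positions $1, \dots, k+1$. Because $v_n$ sits in a different component, this reordering leaves it at position $n$ and leaves $Y = \mathrm{diag}(1,\dots,1,0)$ unchanged; thus $A$ becomes block-diagonal $\mathrm{diag}(A_1, A_2)$, with $A_1$ the adjacency matrix of $v_1$'s component, and $z_Y$ restricted to the first block is $z_1 I_{k+1}$. The block structure then gives
\[
f_A(z_1, z_2) = (A - z_Y)^{-1}_{11} = (A_1 - z_1 I)^{-1}_{11},
\]
a rational function of $z_1$ alone, which forces $\phi_A = \beta \circ f_A \circ \beta^{-1}$ and its denominator $p_A$ to lie in $\mathbb{C}[z_1]$.

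If $v_1$ is isolated, then $A_1 = [0]$, so $f_A(z_1, z_2) = -1/z_1$. A short computation composing with the maps in \eqref{eqn:beta} yields $\phi_A(z_1, z_2) = -z_1$, whose denominator in lowest terms is $1$; hence $p_A$ is constant.

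For the remaining case where $v_1$ is not isolated, $A_1$ is the adjacency matrix of a connected simple graph on at least two vertices, so $v_1$ has a neighbor in its component and the first column $A_1 e_1$ is nonzero. Using the spectral decomposition $A_1 = \sum_j \lambda_j P_j$ (real $\lambda_j$, orthogonal projections $P_j$), I obtain the partial fraction expansion
\[
f_A(z_1) = \sum_j \frac{c_j}{\lambda_j - z_1}, \qquad c_j = \|P_j e_1\|^2 \ge 0, \qquad \sum_j c_j = 1,
\]
so $f_A$ is non-constant and $\phi_A$ is a non-constant one-variable Blaschke product, say of degree $d \ge 1$. By \eqref{eqn:RIFform}, $p_A$ is constant iff $\phi_A$ equals a monomial $\lambda z_1^d$; in that case $\phi_A(0) = 0$ forces $f_A(i) = \beta^{-1}(0) = i$, and equating imaginary parts yields
\[
\IM f_A(i) = \sum_j \frac{c_j}{\lambda_j^2+1} = 1.
\]
Since $1/(\lambda_j^2+1) \le 1$ with equality only when $\lambda_j = 0$, and $\sum_j c_j = 1$, this forces $c_j = 0$ whenever $\lambda_j \ne 0$, i.e., $e_1 \in \ker A_1$, contradicting $A_1 e_1 \ne 0$. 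Therefore $p_A$ is not constant.

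The main obstacle will be the final step, ruling out $\phi_A$ being a monomial. The key trick is evaluating $f_A$ at the preimage $w = i$ of $0 \in \mathbb{D}$ under $\beta$, which converts the qualitative condition \emph{``$\phi_A$ is a monomial''} into the sharp identity $\sum_j c_j/(\lambda_j^2+1) = 1$; this identity is incompatible with the real spectrum of $A_1$ unless the spectral data of $e_1$ collapses entirely into the zero eigenspace, a condition that directly encodes the isolation of $v_1$.
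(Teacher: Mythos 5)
Your proof is correct, and while the first two claims (the block-diagonal reduction via Lemma \ref{lem:1} showing $f_A=(A_1-z_1I)^{-1}_{11}$ depends only on $z_1$, and the computation $f_A=-1/z_1$, $\phi_A=-z_1$ in the isolated case) match the paper's argument, your treatment of the final claim takes a genuinely different route. The paper expands $f_A=\beta^{-1}\circ\phi_A\circ\beta$ in a Laurent series at infinity, matches the constant and $1/z_1$ coefficients against the Neumann series $\sum_{n\ge 0}-(A_1^n)_{11}/z_1^{n+1}$ to conclude that $f_A=-1/z_1$ is the \emph{only} function in the paradigm with constant $p_A$, and then rules this out using the combinatorial fact that $(A_1^2)_{11}$ counts closed walks of length $2$. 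You instead use the spectral decomposition $f_A(z_1)=\sum_j c_j/(\lambda_j-z_1)$ with $c_j=\|P_je_1\|^2$, observe that $\phi_A=\lambda z_1^d$ forces the fixed-point identity $f_A(i)=i$, and exploit the resulting equality case in $\sum_j c_j/(\lambda_j^2+1)=1=\sum_j c_j$ to conclude $e_1\in\ker A_1$, contradicting $A_1e_1\ne 0$. Both arguments are sound; the paper's buys the explicit characterization of the constant case and a direct graph-theoretic interpretation via walk counts, while yours is a cleaner rigidity argument in the Herglotz/spectral representation that avoids coefficient matching at infinity. One small point worth making explicit in your write-up: the reduction from ``$p_A$ constant'' to ``$\phi_A=\lambda z_1^d$'' uses that $\tilde p_A/p_A$ is constant when $p_A$ is, together with the fact that $\phi_A$ has no $z_2$-dependence to kill the $z_2^\ell$ factor in \eqref{eqn:RIFform}; you assert this correctly but tersely.
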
 

\begin{proof} By Lemma \ref{lem:1}, we can reorder the vertices of $G_A$ so that $v_1, \dots, v_m$ are exactly the vertices in the path-connected component of $G_A$ containing $v_1.$ Then $A$ can be partitioned as 
\[A = 
\left[
\begin{array}{c|c}
A_1 & 0 \\
\hline
0 & \star
\end{array}
\right], \ \text{ so } \ (A - z_Y)^{-1} = \left[
\begin{array}{c|c}
(A_1-z_1I)^{-1} & 0 \\
\hline
0 & \star
\end{array}
\right] ,
\]
where $A_1$ is an $m\times m$ matrix, $0$ and $I$ denote the zero and identity matrices of the appropriate size, and $\star$ denotes matrices immaterial to our calculations. From this, it is clear that the $(1,1)$ entry of $(A - z_Y)^{-1}$, and hence $f_A$, depends only on $z_1,$ so $p_A \in \mathbb{C}[z_1]$. 
Furthermore if $m=1$, then $f_A = -\frac{1}{z_1}$, which implies $\phi_A = -z_1$ and $p_A$ is constant. Moreover, the rational inner function form from \eqref{eqn:RIFform} implies that if $f_A$ only depends on $z_1$ and $p_A$ is constant, then $\phi_A = \alpha z_1^k$, for some $\alpha \in \mathbb{T}$, $k \in \mathbb{N}.$ That implies
\[ f_A(z_1) = \beta^{-1} \circ \phi_A \circ \beta   = -i + \frac{2i}{1 + \alpha (-1 + \frac{2i}{i+z_1})^k },\]
and if we expand this around $\infty$ for $z_1$ sufficiently large, we get
\begin{equation} \label{eqn:fAz1} f_A(z_1)= \left(-i + \frac{2 i}{\alpha (-1)^k+1}\right) - \frac{4(-1)^k \alpha k}{(1 + (-1)^k \alpha)^2}\frac{1}{z_1} + \text{higher order terms.}\end{equation}
Meanwhile, we can write $f_A$ using a Neumann series (for sufficiently large $z_1$) as
\[
\begin{aligned}
f_A(z_1) &=(A_1-z_1I)^{-1}_{11} = -\frac{1}{z_1}(I - \tfrac{1}{z_1}A_1 )^{-1}_{11}  \\ 
&= \sum_{n=0}^{\infty} -\frac{1}{z_1^{n+1}} (A_1^n)_{11} = 0 -\frac{1}{z_1} + \text{higher order terms.}\end{aligned}\]
Setting the first coefficient in \eqref{eqn:fAz1} to $0$ and the second to $-1$ implies that $\alpha =-1$ and $k=1$. Thus $f_A = -\frac{1}{z_1}$ is the only function in our paradigm yielding a constant $p_A$. 

Now, assume $v_1$ is not isolated. Thus, $v_1$ is connected to some other vertex $v_j$ and so, there are paths of length $2$ that begin and end at $v_1$. Because $(A^2)_{11}$ is the number of paths of length $2$ that begin and end at $v_1$, we know $(A^2)_{11} \ne 0$. Thus by the above expansion of $f_A$, if $v_1$ is not isolated, then $f_A \ne - \frac{1}{z_1}$, which means $p_A$ is not constant.
\end{proof}

To prove part (iii) of Theorem \ref{thm:1}, we require the following lemma, which gives us an easy way to use $f_A$ to identify when $p_A$ has a boundary zero at $(-1,1)$.

\begin{lemma} \label{lem:2} Define $h_A$ by $h_A(z_1,z_2) = f_A(-\tfrac{1}{z_1}, z_2)$ and write $h_A = \frac{q_1}{q_2},$ where $q_1, q_2$ are polynomials with no common factors. If $q_1(0,0)=0=q_2(0,0)$, then $p_A(-1,1)=0$.
\end{lemma}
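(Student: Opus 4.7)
The plan is to exploit a clean M\"obius identity that transports the singular behavior of $h_A$ at $(0,0)$ to that of $\phi_A$ at $(-1,1)$. First I would verify the one-variable identity $\beta(-1/z) = -\beta(z)$ directly from $\beta(z) = (1+iz)/(1-iz)$: both sides equal $(z-i)/(z+i)$ after a short manipulation. Lifting this to two variables and using $\phi_A = \beta \circ f_A \circ \beta^{-1}$ together with $\beta^{-1}(-\beta(z_1)) = -1/z_1$ yields the key identity
\[ \beta\bigl(h_A(z_1,z_2)\bigr) = \phi_A\bigl(-\beta(z_1),\beta(z_2)\bigr). \]
Since $\beta(0) = 1$, evaluating at $(z_1,z_2) = (0,0)$ probes $\phi_A$ precisely at $(-1,1)$, which is the heart of the argument.

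Next I would rewrite both sides of this identity as ratios of polynomials in $(z_1,z_2)$ and compare. On the left, since $h_A = q_1/q_2$ is in lowest terms,
\[ \beta(h_A) = \frac{q_2 + iq_1}{q_2 - iq_1}, \]
and this representation is also in lowest terms: any common factor of $q_2 + iq_1$ and $q_2 - iq_1$ divides their sum $2q_2$ and difference $2iq_1$, hence divides $\gcd(q_1,q_2) = 1$. On the right, choose $N_1,N_2$ bounding the degrees of $q_A$ and $p_A$ in each variable and multiply numerator and denominator of $\phi_A(-\beta(z_1),\beta(z_2))$ by $(1-iz_1)^{N_1}(1-iz_2)^{N_2}$ to obtain a ratio $\widetilde{q}_A/\widetilde{p}_A$ of honest polynomials in $(z_1,z_2)$. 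Because $(1\pm iz_j)|_{z_j=0} = 1$ and $(-\beta(0),\beta(0)) = (-1,1)$, a direct evaluation gives $\widetilde{p}_A(0,0) = p_A(-1,1)$ and $\widetilde{q}_A(0,0) = q_A(-1,1)$.

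Equating the two polynomial-ratio representations then yields the identity $\widetilde{q}_A(q_2 - iq_1) = \widetilde{p}_A(q_2 + iq_1)$ in $\mathbb{C}[z_1,z_2]$. By unique factorization and the coprimality of $q_2 \pm iq_1$, the factor $(q_2 - iq_1)$ must divide $\widetilde{p}_A$, so $\widetilde{p}_A = R\cdot(q_2 - iq_1)$ for some polynomial $R$. Evaluating at $(0,0)$ and invoking the hypothesis $q_1(0,0) = q_2(0,0) = 0$ forces $\widetilde{p}_A(0,0) = R(0,0)\cdot 0 = 0$, and therefore $p_A(-1,1) = 0$, as desired.

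The main obstacle is the bookkeeping needed to clear the M\"obius denominators so that $\widetilde{q}_A$ and $\widetilde{p}_A$ are honest polynomials and so that their evaluations at $(0,0)$ recover $q_A(-1,1)$ and $p_A(-1,1)$ cleanly. Once the identity $\beta(-1/z) = -\beta(z)$ is in hand, the remainder of the argument is a unique-factorization manipulation, and in particular no delicate analysis of $f_A$ near $(\infty,0)$ is required.
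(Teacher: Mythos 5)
Your proof is correct and follows essentially the same route as the paper's: both transfer the vanishing of $q_1,q_2$ at $(0,0)$ to $p_A$ at $(-1,1)$ via the M\"obius identity relating $h_A$ and $\phi_A$, clear denominators to obtain two polynomial representations of the same rational function, and invoke coprimality of $q_2+iq_1$ and $q_2-iq_1$. Your direct divisibility step (showing $q_2-iq_1$ divides $\widetilde{p}_A$) is a slightly cleaner rendering of the paper's contradiction argument, which instead deduces that the two representations would have to share a common non-constant factor.
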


\begin{proof} Note that for almost every $(z_1, z_2) \in \mathbb{C}^2$, $\phi_A$ satisfies
\[ 
\begin{aligned}
\phi_A(z_1, z_2) &= \beta \circ h_A\left( \frac{-1}{\beta^{-1}(z_1)}, \beta^{-1}(z_2) \right)\\
&= \frac{ q_2\left( -\tfrac{1}{\beta^{-1}(z_1)}, \beta^{-1}(z_2) \right) +iq_1\left( -\tfrac{1}{\beta^{-1}(z_1)}, \beta^{-1}(z_2) \right)}{q_2\left( -\tfrac{1}{\beta^{-1}(z_1)}, \beta^{-1}(z_2) \right)-iq_1\left( -\tfrac{1}{\beta^{-1}(z_1)}, \beta^{-1}(z_2) \right)}.\end{aligned}\]
Multiply through by the lowest powers of $(z_1-1)$ and $(z_2+1)$ to turn both the numerator and denominator into polynomials and denote those polynomials by $r_A, s_A$. Then
\[ \phi_A(z_1, z_2) = \frac{q_A(z_1, z_2)}{p_A(z_1, z_2)} = \frac{r_A(z_1, z_2)}{s_A(z_1, z_2)},\]
for almost every $(z_1, z_2) \in \mathbb{C}^2.$ By way of contradiction, assume $p_A(-1,1) \ne 0$. By our assumptions, $r_A(-1,1) = 0 = s_A(-1,1),$ which implies that $r_A, s_A$ must share a common, non-constant factor. This implies that $q_1, q_2$ share a common curve of zeros as well and hence, a common factor, which gives the contradiction. \end{proof}

Now we can prove part (iii) of Theorem \ref{thm:1}. 

\begin{proposition}\label{prop:pConn-genZero} If $v_1$ and $v_n$ are path connected, then $p_A \in \mathbb{C}[z_1, z_2]$ depends on $z_2$  and $p_A(-1,1)=0$.
\end{proposition}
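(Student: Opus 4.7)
The plan is to apply Lemma~\ref{lem:2} by producing the reduced form of $h_A(u, z_2) := f_A(-1/u, z_2)$ and verifying that both its numerator and denominator vanish at $(u, z_2) = (0, 0)$. First I would use Lemma~\ref{lem:one-one elem of inverse matrix} to write $f_A$ as a ratio of determinants, substitute $z_1 \mapsto -1/u$, and scale the first $n - 1$ rows of each determinant by $u$ to clear denominators. This gives $h_A(u, z_2) = u\,\overline{N}(u, z_2)/N(u, z_2)$, where $N$ and $\overline{N}$ are polynomials arising from $G_A$ and from the induced subgraph on $\{v_2, \ldots, v_n\}$, respectively. Applying the Schur complement formula (Lemma~\ref{lem:Detmatrix}) to the rescaled block matrix
\[
\begin{pmatrix} I + uB & uc \\ c^\top & -z_2 \end{pmatrix},
\]
where $B$ is the adjacency of $\{v_1, \ldots, v_{n-1}\}$ and $c = (A_{1n}, \ldots, A_{n-1,n})^\top$, yields the explicit form
\[
N(u, z_2) \;=\; -E(u)\,z_2 - u\,F(u), \qquad E(u) := \det(I + uB),\quad F(u) := c^\top\mathrm{adj}(I + uB)\,c,
\]
with an analogous formula for $\overline{N}$ using the corresponding subgraph quantities $\overline{E}, \overline{F}$. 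Since $E(0) = \overline{E}(0) = 1$, I get $N(0, z_2) = \overline{N}(0, z_2) = -z_2$.

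Next I would show that $g := \gcd(u\,\overline{N}, N)$ satisfies $g(0, 0) \ne 0$. Because $N(0, z_2) = -z_2 \ne 0$, $u \nmid N$, so $u \nmid g$ and $g \mid \overline{N}$. Direct computation gives the identities
\[
\overline{E}\,N - E\,\overline{N} \;=\; -u\,\Delta, \qquad \overline{F}\,N - F\,\overline{N} \;=\; z_2\,\Delta,
\]
where $\Delta(u) := \overline{E}F - E\overline{F} \in \mathbb{C}[u]$. Hence $g \mid u\Delta$ and $g \mid z_2\Delta$, and since $\gcd(u, z_2) = 1$, $g \mid \Delta(u)$, forcing $g \in \mathbb{C}[u]$. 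From $g \mid N$ in $\mathbb{C}[u][z_2]$ the $z_2$-coefficient of $N$ being $-E(u)$ gives $g \mid E$, and $E(0) = 1$ then forces $g(0) \ne 0$. Therefore the reduced quotients $q_1 := u\,\overline{N}/g$ and $q_2 := N/g$ both vanish at $(0, 0)$, so Lemma~\ref{lem:2} gives $p_A(-1, 1) = 0$. The $z_2$-dependence of $p_A$ then follows from the observation, already noted in the paper, that the denominator of a rational inner function cannot be univariate if it has a zero on $\mathbb{T}^2$.

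The hard part is ensuring $\Delta \not\equiv 0$, since otherwise the bound $g \mid \Delta$ is vacuous. If $\Delta \equiv 0$, then $\overline{N}/N = \overline{E}/E$ depends only on $u$, making $f_A$ independent of $z_2$. Via the computation $\partial_{z_2} f_A = \bigl((A - z_Y)^{-1}_{1,n}\bigr)^2$, this would force $(A - z_Y)^{-1}_{1,n} \equiv 0$. However, at $z_1 = z_2 = z \to \infty$ the Neumann series $(A - zI)^{-1}_{1,n} = -\sum_{k \ge 0} (A^k)_{1,n}\,z^{-(k+1)}$ is nonvanishing whenever $v_1, v_n$ are path connected, since $(A^k)_{1,n}$ counts walks of length $k$ from $v_1$ to $v_n$ and is strictly positive for $k$ equal to any connecting walk's length, ruling out the degenerate case.
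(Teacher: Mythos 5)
Your proof is correct, and it diverges from the paper's at the crucial step. Both arguments begin the same way: reduce $h_A(u,z_2)=f_A(-1/u,z_2)$ via Lemma~\ref{lem:one-one elem of inverse matrix} and the Schur complement to a ratio whose numerator and denominator have the form $-(\text{det})\,z_2 - u(\cdot)$ with the determinant factor equal to $1$ at $u=0$, and then invoke Lemma~\ref{lem:2}. Where the paper shows directly that the two $z_2$-containing Schur factors cannot be identical --- by expanding $\gamma^T(uA_2+I)^{-1}\gamma$ and $\eta^T(uA_3+I)^{-1}\eta$ in Neumann series, deducing $\gamma^TA_2^m\gamma=\eta^TA_3^m\eta$ for all $m$, extracting $b=0$ from $m=0$, and then running a fairly delicate walk-counting argument through an intermediate vertex $v_{n-1}$ using $A_2^{2M}$ --- you instead pin down $\gcd(u\overline{N},N)$ exactly, via the two linear syzygies $\overline{E}N-E\overline{N}=-u\Delta$ and $\overline{F}N-F\overline{N}=z_2\Delta$ with $\Delta=\overline{E}F-E\overline{F}$, concluding $g\mid E$ and hence $g(0)\neq0$. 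The degenerate case $\Delta\equiv0$ is then excluded by the identity $\partial_{z_2}f_A=\bigl((A-z_Y)^{-1}_{1,n}\bigr)^2$ (valid since $I-Y=e_ne_n^T$ when $t=0$ and $A-z_Y$ is symmetric) together with the single fact that $(A^k)_{1,n}>0$ for some $k$ by path-connectedness. This buys you two things: the combinatorial input is reduced to one nonvanishing walk count rather than the paper's two-stage moment comparison, and the reduction to coprime $q_1,q_2$ demanded by Lemma~\ref{lem:2} is handled explicitly rather than implicitly. The paper's route, on the other hand, exposes the stronger structural fact that the two Schur factors differ as functions, which is reused in spirit elsewhere. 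The closing step (a zero on $\mathbb{T}^2$ forces genuine two-variable dependence, since a one-variable $\phi_A$ would be a finite Blaschke product) is identical to the paper's.
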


\begin{proof} Assume that $v_1$ and $v_n$ are path connected. Define the submatrices $A_1$, $A_2$, $A_3$ and vectors $\alpha, \gamma, \zeta, \eta, b$ of the appropriate size, so that  
\[
A=
\left[
\begin{array}{c|c}
0 & \alpha^T\\
\hline
\alpha & A_1
\end{array}
\right] =
\left[
\begin{array}{c|c}
A_2  & \gamma \\
\hline
\gamma^T & 0
\end{array}
\right]
=
\left[
\begin{array}{c|c|c}
0 & \zeta^T & b \\
\hline
\zeta & A_3 & \eta \\
\hline
b & \eta^T & 0
\end{array}
\right].
\]
Then using Lemma \ref{lem:one-one elem of inverse matrix} and Lemma \ref{lem:Detmatrix}, we obtain
\[ f_A(z_1, z_2) = \frac{\det(A_1 - z_Y)}{\det(A - z_Y)} = \frac{\det(A_3 - z_1  I)(-z_2 - \eta^T(A_3 - z_1I)^{-1}\eta)}{\det(A_2 - z_1  I)(-z_2 - \gamma^T(A_2 - z_1I)^{-1}\gamma)}.\]
With an eye towards using Lemma \ref{lem:2}, replace $z_1$ with $-\tfrac{1}{z_1}$ and multiply by $\frac{z_1^{n-1}}{z_1^{n-1}}$ to obtain
\begin{equation} \label{eqn:hg}
h_A(z_1,z_2):=f_A(-\tfrac{1}{z_1}, z_2) 
=  \frac{z_1 \det(z_1A_3 + I)(-z_2 - z_1\eta^T(z_1A_3 +I)^{-1}\eta)}{\det(z_1A_2 +  I)(-z_2 - z_1\gamma^T(z_1A_2 + I)^{-1}\gamma)}.
\end{equation}
We claim that the factors containing $z_2$ in the numerator and denominator in \eqref{eqn:hg} cannot cancel out. By way of contradiction, assume that they do. Then for all $z_1$,
\[ \gamma^T \left(z_1A_2 +  I\right)^{-1} \gamma  = \eta^T \left(z_1A_3 + I\right)^{-1}\eta. \]
For small $z_1$, the Neumann series for $-z_1 A_1$ and $-z_1A_2$ converge and so, we can conclude
\[ \sum_{m=0}^{\infty} (-z_1)^{m}  \gamma^T A_2^m \gamma   =  \gamma^T \left(z_1A_2 + I\right)^{-1} \gamma  = \eta^T \left(z_1A_3 + I\right)^{-1}\eta = \sum_{m=0}^{\infty} (-z_1)^m \eta^T A_3^m \eta,\]
or more specifically, for all $m \ge 0,$
\begin{equation} \label{eqn:mequal}  \gamma^T A_2^m \gamma  = \eta^T A_3^m \eta.\end{equation}
Setting $m = 0$, this immediately gives $\gamma^T \gamma  = \eta^T \eta$, so $b=0.$ This implies that $v_1$ and $v_n$ are not connected by a single edge. But since they are path connected, $v_1$ must be path connected to some other vertex through a path that does not include $v_n$, such that this intermediate vertex is directly connected to $v_n$. By Lemma \ref{lem:1}, we can assume that this intermediate vertex is $v_{n-1}$.

Let $M$ denote the length of a path between $v_1$ and $v_{n-1}$ in $G_A$ that does not go through $v_n$. By well-known properties of adjacency matrices, $k:=(A_2)^M_{1,n-1}$ gives the number of paths between $v_1$ and $v_{n-1}$ of length $M$ in the graph obtained by removing $v_n$ from $G_A$. Then we can write
\[
A_2^M=
\left[
\begin{array}{c|c}
c & \begin{matrix}\epsilon^T & k \end{matrix} \\
\hline
\begin{matrix} \epsilon \\ k \end{matrix} & B + A_3^M \\
\end{array}
\right],
\]
where $k>0$, $\epsilon$ is a vector, and $B$ is a matrix, both with real, nonnegative entries. Then since $k>0$, $b=0$, and $\eta_{n-2} =1$ because $v_{n-1}$ and $v_n$ are path connected, we have
\[\gamma^T A_2^{2M} \gamma = \eta^T \left(  \begin{bmatrix} \epsilon \\ k \end{bmatrix} \begin{bmatrix}\epsilon^T & k \end{bmatrix} + (B + A_3^M)^2 \right ) \eta >\eta^T A_3^{2M} \eta,\]
which contradicts \eqref{eqn:mequal}.  Thus, the factors in the numerator and denominator that involve $z_2$ in \eqref{eqn:hg} do not cancel. 

Since those factors vanish at $(0,0),$ we can write $h_A = \frac{q_1}{q_2}$ where $q_1$ and $q_2$ are polynomials with no common factors and $q_1(0,0) =0=q_2(0,0).$ Thus, by Lemma \ref{lem:2}, $p_A(-1,1)=0$. Since $p_A(-1,1) =0$, it follows immediately that $p_A$ must have both $z_1$ and $z_2$-dependence, since if $\phi_A$  were one variable, it would be a finite Blaschke product, and those do not have singularities on $\mathbb{T}.$
\end{proof}

\subsection{Boundary zeros for nonzero t}\label{transition}

In this subsection, we track how the location of  boundary zeros of $p_A^t$ depends on the coloring parameter $t$. This allows us to use the $t=0$ case to derive results when $t \in (0,1)$ and in particular, prove Corollary \ref{cor:pt}.

We require this initial lemma. (Note the superscript $n$ in $z_j^n$  and $w_j^n$ below represents an index and not a power.)

\begin{lemma}\label{lem:phi_seq} Let $\phi$ be a rational inner function on $\mathbb{D}^2$ with stable polynomial denominator $p$ (after any common factors are canceled). Then

\begin{enumerate}
    \item[i.] If $p(\tau_1,\tau_2) = 0$ for $(\tau_1,\tau_2) \in \mathbb{T}^2$, then for almost every $\eta \in \mathbb{T}$, there is a sequence $z_n = (z_1^n,z_2^n) \subset \mathbb{T}^2$, with each $z_1^n \ne \tau_1$, each $z_2^n \ne \tau_2$, each $\phi(z_1^n,z_2^n) = \eta$, and $(z_1^n,z_2^n) \rightarrow (\tau_1, \tau_2)$.
    \item[ii.] Assume there are distinct $\eta_1, \eta_2 \in \mathbb{T}$ with corresponding sequences $z_n = (z_1^n,z_2^n) \subset \mathbb{T}^2$ and $w_n = (w_1^n,w_2^n) \subset \mathbb{T}^2$ such that $(z_1^n,z_2^n)$ and $(w_1^n,w_2^n)$ both converge to $(\tau_1,\tau_2) \in \mathbb{T}^2$ and $\phi(z_1^n,z_2^n) = \eta_1$ and $\phi(w_1^n,w_2^n) = \eta_2$ for all $n$. Then $p(\tau_1,\tau_2) = 0$.
\end{enumerate}
\end{lemma}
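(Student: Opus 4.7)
The plan is to handle (ii) first by a short contrapositive argument, then tackle (i) via a slicing argument combined with a careful multiplicity count on how roots of the slice equations behave as the slice parameter approaches $\tau_2$.

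For (ii), I would argue by contrapositive: assume $p(\tau_1,\tau_2) \ne 0$. Since $\phi$ is written with no common factors between numerator and denominator, $\phi$ is continuous at $(\tau_1,\tau_2)$ with a single unimodular value. Every sequence in $\mathbb{T}^2$ converging to $(\tau_1,\tau_2)$ then yields a convergent $\phi$-sequence with this common limit, contradicting the existence of two sequences producing distinct limits $\eta_1 \ne \eta_2$. Hence $p(\tau_1,\tau_2) = 0$.

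For (i), I would slice $\phi$ at $z_2 = w$ with $w \in \mathbb{T}$ close to but not equal to $\tau_2$. Since the zero set of $p$ on $\mathbb{T}^2$ is finite and, by Lemma 10.1 in \cite{Kne15}, $p$ has no zeros on $(\mathbb{D} \times \mathbb{T}) \cup (\mathbb{T} \times \mathbb{D})$, the slice $\phi(\cdot,w)$ is a finite Blaschke product on $\mathbb{D}$ of a fixed positive degree $d$, hence surjects $\mathbb{T} \to \mathbb{T}$ as a $d$-fold covering. At $w = \tau_2$, a common factor $(z_1 - \tau_1)^a$ divides both the numerator $\lambda z_1^k \tau_2^\ell \tilde{p}(z_1,\tau_2)$ and the denominator $p(z_1,\tau_2)$ --- the reflection relation $|\tilde{p}| = |p|$ on $\mathbb{T}^2$ forces $\tilde{p}(\tau_1,\tau_2) = 0$, and the common multiplicity $a \ge 1$ comes from $p(\tau_1,\tau_2) = 0$. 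After canceling this factor, $\phi(\cdot,\tau_2)$ reduces to a Blaschke product of degree $d - a$. By continuity of roots, for each $\eta \in \mathbb{T}$ the equation $\phi(\cdot,w) = \eta$ has $d$ solutions on $\mathbb{T}$, and exactly $a \ge 1$ of them converge to $\tau_1$ as $w \to \tau_2$, giving candidate sequences $(z_1^n,w_n) \to (\tau_1,\tau_2)$ with $\phi \equiv \eta$.

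The remaining step is to arrange that the collapsing preimages $z_1^n$ stay strictly different from $\tau_1$ along some sequence $w_n \to \tau_2$ with $w_n \ne \tau_2$. The condition $z_1^n = \tau_1$ is equivalent to $\phi(\tau_1,w_n) = \eta$; after canceling common $(z_2-\tau_2)$ factors, the one-variable function $\phi(\tau_1,\cdot)$ is itself a finite Blaschke product of some degree $e \ge 0$. If $e \ge 1$, then $\phi(\tau_1,\cdot) = \eta$ has only finitely many solutions on $\mathbb{T}$, so almost every $w_n \to \tau_2$ avoids them and the construction succeeds for every $\eta \in \mathbb{T}$. If $e = 0$, then $\phi(\tau_1,\cdot) \equiv c$ for some $c \in \mathbb{T}$, and the construction succeeds for every $\eta \in \mathbb{T} \setminus \{c\}$, still a set of full measure. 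The main obstacle I expect is the root-continuity/multiplicity count showing that exactly $a$ roots of the slice equation collapse to $\tau_1$; I would justify this via the argument principle or by explicitly factoring out the common $(z_1-\tau_1)^a$ before passing to the limit $w \to \tau_2$.
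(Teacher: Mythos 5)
Your proof is correct, and part (i) follows a genuinely different route from the paper. For (ii) both arguments are the same contrapositive observation: a nonvanishing denominator forces continuity of $\phi$ at $(\tau_1,\tau_2)$, which is incompatible with two unimodular cluster values. For (i), the paper does not construct the sequences by hand; it first invokes Theorem 1.5 of \cite{kb13} to rule out a continuous unimodular extension at $(\tau_1,\tau_2)$, and then cites Corollary 1.7 of \cite{Pascoe17} together with the level-curve parametrization Theorem 2.8 of \cite{bps19a}, which describe the unimodular level sets $\{\phi=\eta\}$ near the singularity as nonconstant analytic curves for almost every $\eta$; the sequences are read off from those curves. You instead run a self-contained one-variable slicing argument: for generic $w\in\mathbb{T}$ near $\tau_2$ the slice $\phi(\cdot,w)$ is a finite Blaschke product, the function $F_w(z_1)=\lambda z_1^k w^\ell\tilde p(z_1,w)-\eta p(z_1,w)$ vanishes at $(\tau_1,\tau_2)$ to positive finite order for all but at most one $\eta$, and Hurwitz/Rouch\'e then forces at least one root of $\phi(\cdot,w)=\eta$ (necessarily on $\mathbb{T}$, since unimodular values of a finite Blaschke product are attained only on $\mathbb{T}$, and the no-common-factor hypothesis keeps the poles of the slice from being zeros of $F_w$) to collapse to $\tau_1$ as $w\to\tau_2$; your handling of the degenerate cases $z_1^n=\tau_1$ via the one-variable function $\phi(\tau_1,\cdot)$ is also sound. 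Two small remarks: the exact multiplicity count ``exactly $a$ roots collapse'' requires knowing that the orders of vanishing of $p(\cdot,\tau_2)$ and $\tilde p(\cdot,\tau_2)$ at $\tau_1$ coincide (which does follow from $|\tilde p|=|p|$ on $\mathbb{T}^2$ by comparing orders of vanishing along the circle), but for the lemma you only need ``at least one,'' which is immediate; and your approach, while more elementary and self-contained, yields only the sequences and not the analytic parametrization of the level sets that the paper's citations provide and that the paper reuses later (Remark \ref{rem:CO2} and Proposition \ref{prop:COt}).
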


\begin{proof} For (i), note that since $\phi$ is a rational inner function, $p(\tau_1,\tau_2) = 0$ implies that $\phi$ does not have a continuous unimodular extension to a neighborhood of $(\tau_1, \tau_2)$ in $\mathbb{T}^2$; this would imply that $\phi$ actually extended analytically to a neighborhood of $(\tau_1,\tau_2)$ in $\mathbb{C}^2$, see Theorem 1.5 in \cite{kb13}, which in turn would contradict the fact that the numerator and denominator of $\phi$ have no common factors. Then the statement about level set sequences follows from an application of Corollary 1.7 in \cite{Pascoe17} and the description of these level sets (in terms of non-constant analytic functions for almost every $\eta$) in  Theorem 2.8  in \cite{bps19a}. For (ii), note that the assumptions imply that $\phi$ is discontinuous at $(\tau_1,\tau_2)$ and since $\phi$ is rational, its denominator must vanish at $(\tau_1, \tau_2).$
\end{proof}

Now we study the correspondence between boundary zeros in the $t = 0$ case and in the $t \neq 0$ case. We first study the boundary zero whose existence is often guaranteed by Theorem \ref{thm:1}.
\begin{proposition} \label{thm:7} Using the notation defined earlier, if $p_A(-1,1) = 0$, then $p_A^t(-1,-1) = 0$.
\end{proposition}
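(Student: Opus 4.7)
The plan is to relate $\phi_A^t$ to $\phi_A$ by a simple substitution in the second slot and then transport the approach sequences produced by Lemma~\ref{lem:phi_seq}(i) at $(-1,1)$ to new ones converging to $(-1,-1)$. The starting observation is that $Y=\mathrm{diag}(1,\dots,1,t)$ forces $z_Y$ to agree with its $t=0$ version in every diagonal entry except the last, where it becomes $tw_1+(1-t)w_2$. Reading the $(1,1)$ entry of the inverse in \eqref{eqnft} off gives the identity
\[
f_A^t(w_1, w_2) = f_A\bigl(w_1,\, tw_1 + (1-t)w_2\bigr), \qquad (w_1, w_2) \in \mathbb{H}^2,
\]
which is well posed because convex combinations of points in $\mathbb{H}$ stay in $\mathbb{H}$. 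Conjugating coordinatewise by $\beta$ then yields
\[
\phi_A^t(z_1, z_2) = \phi_A\bigl(z_1,\, \Psi_t(z_1, z_2)\bigr), \qquad \Psi_t(z_1, z_2) := \beta\bigl(t\beta^{-1}(z_1) + (1-t)\beta^{-1}(z_2)\bigr),
\]
and since $\beta^{-1}$ sends $\mathbb{T}\setminus\{-1\}$ bijectively onto $\mathbb{R}$, the map $\Psi_t$ takes $(\mathbb{T}\setminus\{-1\})^2$ into $\mathbb{T}$.

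Next, I would apply Lemma~\ref{lem:phi_seq}(i) to $\phi_A$ at its boundary zero $(-1,1)$ to obtain, for almost every $\eta\in\mathbb{T}$, a sequence $(z_1^n, z_2^n)\in\mathbb{T}^2$ with $z_1^n\ne -1$, $z_2^n\ne 1$, $(z_1^n, z_2^n)\to(-1, 1)$, and $\phi_A(z_1^n, z_2^n)=\eta$. I would then lift this sequence by setting $w_1^n := z_1^n$ and defining $w_2^n\in\mathbb{T}$ via
\[
\beta^{-1}(w_2^n) := \frac{\beta^{-1}(z_2^n) - t\beta^{-1}(z_1^n)}{1 - t}.
\]
The right-hand side is a real number, hence determines a unique $w_2^n\in\mathbb{T}\setminus\{-1\}$; by construction $\Psi_t(w_1^n, w_2^n)=z_2^n$, so $\phi_A^t(w_1^n, w_2^n) = \phi_A(z_1^n, z_2^n) = \eta$. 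Since $\beta^{-1}(z_2^n)\to 0$ while $|\beta^{-1}(z_1^n)|\to\infty$ and $t>0$, we get $|\beta^{-1}(w_2^n)|\to\infty$, so $w_2^n\to -1$ in $\mathbb{T}$ and the sequence $(w_1^n, w_2^n)$ converges to $(-1,-1)$.

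Finally, choosing two distinct unimodular values $\eta_1\ne\eta_2$ from the full-measure set produced above yields two sequences in $\mathbb{T}^2$ converging to $(-1,-1)$ along which $\phi_A^t$ equals $\eta_1$ and $\eta_2$ respectively. An appeal to Lemma~\ref{lem:phi_seq}(ii) applied to $\phi_A^t$ then forces $p_A^t(-1,-1)=0$. The step I would watch most carefully is the direction-of-approach bookkeeping: $\beta^{-1}(z_1^n)$ can oscillate between $+\infty$ and $-\infty$ as $z_1^n$ approaches $-1$ from different arcs of $\mathbb{T}$, and $\beta^{-1}(w_2^n)$ oscillates correspondingly. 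However, only the divergence of $|\beta^{-1}(w_2^n)|$ is needed to conclude $w_2^n\to -1$ in $\mathbb{T}$, so no genuine obstruction arises and the entire argument reduces to the substitution identity together with the two halves of Lemma~\ref{lem:phi_seq}.
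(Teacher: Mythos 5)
Your proof is correct and follows essentially the same route as the paper: the substitution identity $f_A^t(w_1,w_2)=f_A(w_1,tw_1+(1-t)w_2)$, the transport of level-set sequences from $(-1,1)$ to $(-1,-1)$ via the inverse affine change of variables in the second coordinate, and the two halves of Lemma~\ref{lem:phi_seq}. The only cosmetic difference is that you perform the bookkeeping directly on $\mathbb{T}^2$ through the conjugated map $\Psi_t$, whereas the paper passes explicitly through sequences in $\mathbb{R}^2$; the content is identical.
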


\begin{proof} Assume $p_A(-1,1) = 0$. By Lemma \ref{lem:phi_seq}, there exist  distinct $\eta_1, \eta_2 \in \mathbb{T}$ with $\eta_1, \eta_2 \neq -1$ that possess corresponding sequences $z_n = (z_1^n,z_2^n)$ and $ w_n= (w_1^n,w_2^n)$ in  $\mathbb{T}^2$  such that $z_n, w_n \rightarrow (-1,1)$ with $\phi_A(z_1^n,z_2^n) = \eta_1$, $\phi_A(w_1^n,w_2^n) = \eta_2$, and $z_1^n, w_1^n \ne -1$  for each $n$. Recall the definitions of $\beta, \beta^{-1}$ from \eqref{eqn:beta} and
 define sequences $r_n, s_n \subseteq \mathbb{R}^2$ by  $$r_n = (r_1^n,r_2^n) = (\beta^{-1}(z_1^n),\beta^{-1}(z_2^n)) \ \ \text{ and } \ \ s_n = (s_1^n,s_2^n) = (\beta^{-1}(w_1^n),\beta^{-1}(w_2^n))$$
 and note that $r_n, s_n \rightarrow (\beta^{-1}(-1),\beta^{-1}(1)) = (\infty, 0)$, in the sense that the sequences with elements $|\beta^{-1}(r^1_n)|$ and $ |\beta^{-1}(s^1_n)|$ diverge to infinity. Set $t_1 = \beta^{-1}(\eta_1) \in \mathbb{R}$ and $t_2 = \beta^{-1}(\eta_2) \in \mathbb{R}$. Since $f_A = \beta^{-1} \circ \phi_A \circ \beta$, we have $f_A (r_n) = t_1$, $f_A (s_n) =t_2$ for all $n.$

Now we shift these facts to the $t \in (0,1)$ setting.
By the definition from \eqref{eqnft}, we have $$f_A^t(z_1,z_2) = f_A\big(z_1,tz_1 + (1-t)z_2\big).$$ Construct a sequence $\hat{r}_n \subseteq \mathbb{R}^2$ from $r_n$ by 
$$\hat{r}_1^n = r_1^n, \;\;\; \hat{r}_2^n = \frac{1}{1-t}(r_2^n - tr_1^n)$$ 
and  construct $\hat{s}_n$ from $s_n$ in an identical way.
Then  $f_A^t(\hat{r}_n) =  f_A(r_n)=t_1$, $f_A^t(\hat{s}_n) = f_A(s_n)=t_2$ for all $n$, and 
 $\hat{r}_n,\hat{s}_n \rightarrow (\infty,\infty)$.

To convert this to the $\phi_A^t$ setting,  consider the sequences $\hat{z}_n = (\beta(\hat{r}^1_n), \beta(\hat{r}^2_n))$ 
and $\hat{w}_n = (\beta(\hat{s}^1_n), \beta(\hat{s}^2_n))$ in $\mathbb{T}^2.$ Then $\hat{z}_n,\hat{w}_n \rightarrow (-1,-1)$ and $\phi_A^t(\hat{z}_1^n,\hat{z}_2^n) = \eta_1$, $\phi_A^t(\hat{w}_1^n,\hat{w}_2^n) = \eta_2$ for all $n$. An application of Lemma \ref{lem:phi_seq} gives $p_A^t(-1,-1) = 0$, as needed.
\end{proof}

We can now prove Corollary \ref{cor:pt}.


\begin{proof} Let vertices $v_1$ and $v_n$ be path connected. Then Theorem \ref{thm:1} immediately implies that $p_A(-1,1)=0$ and so, Proposition \ref{thm:7} tells us that $p_A^t(-1,-1)=0.$

Conversely, if $v_1$ and $v_n$ are not path connected, then arguments identical to those at the beginning of Proposition \ref{prop:Paz1} imply that $f_A^t$ (and hence $\phi_A^t$) only depends on $z_1$. Then $\phi_A^t$ is a finite Blaschke product, which cannot have any boundary singularities, and so its denominator $p_A^t$ cannot have any boundary zeros. 
\end{proof}

It is worth noting that one can also use the previous change-of-variables arguments to move between general boundary zeros of $p_A$ and general boundary zeros of $p_A^t$.

\begin{proposition} \label{thm:8} Using the notation defined earlier, for any $\tau_1,\tau_2 \in \mathbb{T}$ with $\tau_1,\tau_2 \neq -1$, $p_A(\tau_1,\tau_2) = 0$ if and only if $p_A^t(\lambda_1,\lambda_2) = 0$ where $\lambda_1 = \tau_1$ and $\lambda_2 = \beta\left(\dfrac{\beta^{-1}(\tau_2) - t\beta^{-1}(\tau_1)}{1 - t}\right)$.
\end{proposition}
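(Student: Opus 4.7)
The plan is to mirror the proof of Proposition \ref{thm:7}, exploiting the fact that we are now working away from the point $-1 \in \mathbb{T}$ where $\beta^{-1}$ blows up. The key identity remains $f_A^t(z_1, z_2) = f_A(z_1, t z_1 + (1-t) z_2)$; equivalently, if one sets $\hat{z}_1 = z_1$ and $\hat{z}_2 = (z_2 - t z_1)/(1-t)$, then $f_A^t(\hat{z}_1, \hat{z}_2) = f_A(z_1, z_2)$. I will use Lemma \ref{lem:phi_seq} in both directions to transfer the existence of a boundary zero between the $t=0$ and $t \in (0,1)$ settings.

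For the forward direction, I would assume $p_A(\tau_1, \tau_2) = 0$ with $\tau_1, \tau_2 \in \mathbb{T} \setminus \{-1\}$. By Lemma \ref{lem:phi_seq}(i), I can choose two distinct values $\eta_1, \eta_2 \in \mathbb{T} \setminus \{-1\}$ (using the ``almost every'' clause to avoid $-1$) together with sequences $z_n, w_n \in \mathbb{T}^2$ converging to $(\tau_1, \tau_2)$ and satisfying $\phi_A(z_n) = \eta_1$ and $\phi_A(w_n) = \eta_2$. For $n$ large enough, all coordinates involved avoid $-1$, so coordinatewise application of $\beta^{-1}$ yields sequences $r_n, s_n \in \mathbb{R}^2$ converging to the finite point $(\beta^{-1}(\tau_1), \beta^{-1}(\tau_2))$, with $f_A$ taking the constant values $\beta^{-1}(\eta_1)$ and $\beta^{-1}(\eta_2)$ on them. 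I then apply the linear change of variables in the second coordinate to produce $\hat{r}_n, \hat{s}_n$ converging to the finite point $(\beta^{-1}(\lambda_1), \beta^{-1}(\lambda_2))$, with $f_A^t$ taking those same values. Pushing back to $\mathbb{T}^2$ via $\beta$ gives two sequences in $\mathbb{T}^2$ converging to $(\lambda_1, \lambda_2)$ on which $\phi_A^t$ takes the distinct values $\eta_1$ and $\eta_2$; Lemma \ref{lem:phi_seq}(ii) then forces $p_A^t(\lambda_1, \lambda_2) = 0$.

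For the reverse direction, I would first check that $\lambda_1, \lambda_2 \in \mathbb{T} \setminus \{-1\}$: $\lambda_1 = \tau_1 \ne -1$ is free, and $\beta^{-1}(\lambda_2) = (\beta^{-1}(\tau_2) - t\beta^{-1}(\tau_1))/(1-t)$ is a finite real number because $\tau_1, \tau_2\ne -1$, so $\lambda_2 \ne -1$. With that in hand the situation is symmetric: starting from $p_A^t(\lambda_1, \lambda_2) = 0$ and applying the inverse change of variables $z_2 = t z_1 + (1-t) \hat{z}_2$ in the second coordinate, the same sequence-pushing argument returns $p_A(\tau_1, \tau_2) = 0$.

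The main hazard is bookkeeping rather than mathematical: I must ensure that the chosen level values $\eta_j$ and all sequence coordinates (eventually) stay away from $-1$, so that coordinatewise $\beta^{-1}$ is defined and the linear change of variables commutes with the limit. Since $\tau_j, \lambda_j \ne -1$ by hypothesis and Lemma \ref{lem:phi_seq}(i) gives its conclusion for almost every $\eta$, these conditions can be met simultaneously, and the proof reduces to a clean, symmetric two-way change-of-variables argument.
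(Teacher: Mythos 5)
Your proposal is correct and follows essentially the same route as the paper: the authors likewise prove the forward direction by repeating the sequence-transfer argument of Proposition \ref{thm:7} with the change of variables $\hat{z}_2 = (z_2 - t z_1)/(1-t)$, and for the converse they observe, exactly as you do, that the hypothesis $\tau_1, \tau_2 \neq -1$ keeps all sequences finite so the steps reverse. Your explicit check that $\lambda_2 \neq -1$ and the care with the ``almost every $\eta$'' clause are just slightly more detailed versions of what the paper leaves implicit.
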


\begin{proof}

The forward direction is proved in a way analogous to that of Proposition \ref{thm:7}.

For the backwards direction, the restriction that $\tau_1, \tau_2 \ne -1$ means that the limits of the sequences $\hat{r}_n, \hat{s}_n$ also determine the limits of the $r_n, s_n$ sequences (since none of the sequences diverge to $\infty$). Then we can simply reverse the steps in the proof Proposition \ref{thm:7}.
\end{proof}

\section{Contact Order} \label{sec:CO}
\subsection{Computing Contact Order} \label{subsec:COdef}
Let $\phi$ be a rational inner function with stable polynomial denominator $p$ and a boundary singularity at $(1,1)$. Recall the definition of $\tilde{p}$ from \eqref{eqn:tildep}.
Then
the contact order of $\phi$ at $(1,1)$ (or equivalently, the contact order of $p$ at $(1,1)$) is a positive even integer $K$ that measures how the zero set of $\tilde{p}$ approaches $(1,1)$ from within the face $\mathbb{T} \times \mathbb{D}$ of the bidisk. Specifically, $K$ is the number satisfying
\[ \inf \{ 1-|z_2|: \tilde{p}(z_1, z_2)=0\} \approx |1-z_1|^K\]
for $z_1\in \mathbb{T}$ and $z_2 \in \mathbb{D}$ both sufficiently close to $1$. More generally, if $\phi$ has a singularity at  $(\tau_1, \tau_2) \in \mathbb{T}^2$, then the contact order of $\phi$ at $(\tau_1, \tau_2)$ is just the contact order of $\psi(z_1, z_2):= \phi(\tau_1 z_1, \tau_2 z_2)$ at $(1,1).$

This bidisk definition translates  to a definition of contact order on the bi-upper half plane $\mathbb{H}^2$. Specifically, recalling the conformal map $\beta: \mathbb{H} \rightarrow \mathbb{D}$ given in \eqref{eqn:beta},
we can define a related polynomial
\begin{equation} \label{eqn:r}
r(w_1, w_2) = (1-iw_1)^m(1-iw_2)^n p \left( \beta(w_1), \beta(w_2)\right).
\end{equation}
Then the relationship between $r$, $p$, and $\tilde{p}$ implies that $K$ is also the number satisfying 
\begin{equation} \label{eqn:COdef2} \inf \{ |\IM(w_2)|: r(x_1, w_2)=0\} \approx |x_1|^K,\end{equation}
for $x_1 \in \mathbb{R}$ $w_2 \in -\mathbb{H}$ 
 both sufficiently close to $0$. 

In this paper, the graph construction implies that the highest power of $z_2$ appearing in our rational inner functions  is at most $1$.  When $\phi$ has a singularity on $\mathbb{T}^2$, the highest power of $z_2$ in $p$ (and hence in $\phi$) cannot be $0$ and so must equal $1$. In that case, there is a particularly easy way to evaluate contact order on the bi-upper half plane.

\begin{remark} \label{rem:cor} Let $\deg p  = (n,1)$ and consider $f$ so that $\phi =\beta \circ f \circ \beta^{-1}$. Then, there is a polynomial $Q$ such that
\[
\beta \circ f = \phi \circ \beta = \frac{Q}{r},\]
where $r$ is defined in \eqref{eqn:r} and $Q$ and $r$ have no common factors. 
Clearly, $p(1,1)=0$ implies that $r(0,0)=0$.  Now find polynomials $r_1, r_2 \in \mathbb{C}[w]$ such that
\[ r(w_1, w_2) = r_1(w_1) + w_2 r_2(w_1) = r_2(w_1) \left( w_2 + \frac{r_1(w_1)}{r_2(w_1)}\right).\]
If $r_2(0)=0,$ then  since $r_1(0)=0$, $r$ would vanish along the line $w_1=0$, which cannot happen because $p$ can only have a finite number of zeros on $\mathbb{T}^2$. Thus, $r_2(0) \ne 0$, which means that we can expand the function $\frac{r_1}{r_2}$ in a power series centered at $w_1=0.$ Using \eqref{eqn:COdef2}, we can then conclude that the contact order $K$ of $\phi$ at $(1,1)$ is also the number that satisfies
\[ K = \min \left \{ k\in \mathbb{N}: \IM \Big( \left( \tfrac{r_1}{r_2}\right)^{(k)}(0)\Big) \ne 0\right \}.\]
Alternatively, let $\bar{r}_2$ denote the polynomial whose coefficients are exactly the complex complex conjugates of those of $r_2$. Then for real inputs $x$, we have $\bar{r}_2(x) = \overline{r_2(x)}$ and 
\[\frac{r_1(x)}{r_2(x)} =\frac{ r_1(x) \overline{r_2(x)}}{|r_2(x)|^2},\]
which shows that the contact order $K$ of $\phi$ at $(1,1)$ is also given by 
\[ K = \text{ the order of vanishing at $0$ of } \IM \Big( r_1(x) \overline{r_2(x)} \Big).\]

To compute this quantity in practice, we will start with writing $f=\frac{q_1}{q_2}$ for polynomials $q_1$ and $q_2$. If $q_1$ and $q_2$ have no common factors, then up to a constant factor, $r=q_2-iq_1$. However if $q_1$ and $q_2$ actually have a common factor of the form $m(z_1)$ with $m(0) \ne 0,$ then then order of vanishing of $\IM ( r_1(x) \overline{r_2(x)})$ at $0$ would be unaffected by still letting $r=q_2-iq_1$. Thus, in what follows, we simply require that $q_1$ and $q_2$ not share a common factor of $z_1$ or a factor involving $z_2.$

Additionally, in later theorems, we also encounter $\phi$ with a singularity at $(-1,1)$. As mentioned above, to compute its contact order, we computing the contact order of $\psi(z_1, z_2):= \phi(- z_1, z_2)$ at $(1,1).$ To do this computation, we note that if $\phi \circ \beta = \beta \circ f$, then $\psi \circ \beta = \beta \circ g,$
where $g(w_1, w_2) = f(-1/w_1, w_2).$ This means we can conduct the same argument as above to compute contact order, but with $g = \frac{q_1}{q_2}.$
\end{remark}

There is also a useful way to define contact order in terms of unimodular level sets of $\phi$, see Section $3$ in \cite{bps19a}, which we encode in the following remark:

\begin{remark} \label{rem:CO2} Let $\phi$ have a singularity at $(\tau_1, \tau_2) \in \mathbb{T}^2$. Then for all but a finite number of $\eta \in \mathbb{T}$, the unimodular level set 
\[ \mathcal{C}_\eta = \text{Closure of } \left\{ (z_1, z_2) \in \mathbb{T}^2 : \phi(z_1, z_2) = \eta \right\}\]
near $(\tau_1,\tau_2)$ can be parameterized by $\ell$ analytic functions $g^1_\eta, \dots, g^\ell_\eta$ (where $\ell$ does not depend on $\eta$) in the following way: there is an open neighborhood of $(\tau_1,\tau_2)$ such that all points of $\mathcal{C}_\eta$ in that set are of the form
\[(z_1, g^1_\eta(z_1)), \dots, (z_1, g^\ell_\eta(z_1))\]
 and such points are also in $\mathcal{C}_\eta$.
Then contact order is actually a measure of how these unimodular level sets agree near the singularity $(\tau_1,\tau_2)$ for different $\eta$ values. Specifically, as shown in \cite{bps19a}, for almost every $\eta_1, \eta_2 \in \mathbb{T}$ (called generic values), if we order the parameterizing functions appropriately, then
\[ K = \max_{1\le j \le \ell}\left \{ \text{Order of vanishing of } | g^j_{\eta_1}(z_1) - g^j_{\eta_2}(z_1) | \text{ at } \tau_1 \right\}.\]
In our setting, we generally have $\deg \phi = (n,1)$ for some $n$, which will imply that the number of branches of a generic $C_\eta$ will be $1$, so $\ell=1.$
\end{remark}

\subsection{Contact order and t-values} \label{subsec:COtvalues}
We can now examine the contact orders of zeros of stable polynomials $p_A$ or $p_A^t$ obtained via our construction.
We first establish the following connection between $t$ values and contact order at most related zeros of $p_A$ and $p_A^t$.

\begin{proposition} \label{prop:COt}
Let $\tau_1, \tau_2 \in \mathbb{T} \setminus \{-1\}$. By Proposition \ref{thm:8}, $p_A(\tau_1,\tau_2) = 0$ if and only if $p_A^t(\lambda_1,\lambda_2) = 0$ where 
\[ \lambda_1 = \tau_1 \ \ \text{ and } \ \ \lambda_2 = \beta\left(\dfrac{\beta^{-1}(\tau_2) - t\beta^{-1}(\tau_1)}{1 - t}\right).\]
Then the contact order of $p_A$ at $(\tau_1, \tau_2)$ equals the contact order of $p_A^t$ at $(\lambda_1, \lambda_2).$
\end{proposition}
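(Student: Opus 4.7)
My plan is to use the unimodular level set description of contact order from Remark \ref{rem:CO2} and show that the level sets of $\phi_A$ at $(\tau_1, \tau_2)$ and of $\phi_A^t$ at $(\lambda_1, \lambda_2)$ are related by a local biholomorphism, so that their parameter differences share the same order of vanishing.

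The identity $f_A^t(z_1, z_2) = f_A(z_1,\, t z_1 + (1-t) z_2)$ yields an explicit bijection between level sets at each $\eta \in \mathbb{T}$: if $(\zeta_1, \zeta_2) \in \mathbb{T}^2$ satisfies $\phi_A(\zeta_1, \zeta_2) = \eta$, then the corresponding point with $\phi_A^t = \eta$ is $(\zeta_1, \Phi(\zeta_1, \zeta_2))$, where
\[
\Phi(z_1, \zeta) := \beta\!\left( \frac{\beta^{-1}(\zeta) - t\,\beta^{-1}(z_1)}{1-t}\right).
\]
This bijection maps $(\tau_1, \tau_2) \mapsto (\lambda_1, \lambda_2)$. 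The construction forces $\deg p_A = (n,1)$, so the generic level sets of $\phi_A$ near $(\tau_1, \tau_2)$ have a single analytic branch $(z_1, g_\eta(z_1))$ with $g_\eta(\tau_1) = \tau_2$, and correspondingly the generic level sets of $\phi_A^t$ near $(\lambda_1, \lambda_2)$ are $(z_1, h_\eta(z_1))$ with $h_\eta(z_1) = \Phi(z_1, g_\eta(z_1))$.

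By Remark \ref{rem:CO2}, for generic $\eta_1, \eta_2 \in \mathbb{T}$ the two contact orders equal the orders of vanishing at $z_1 = \tau_1$ of $|g_{\eta_1} - g_{\eta_2}|$ and of $|h_{\eta_1} - h_{\eta_2}|$ respectively. The integral form of Taylor's theorem gives
\[
h_{\eta_1}(z_1) - h_{\eta_2}(z_1) = \bigl(g_{\eta_1}(z_1) - g_{\eta_2}(z_1)\bigr)\int_0^1 \partial_\zeta\Phi\bigl(z_1,\, g_{\eta_2}(z_1) + s(g_{\eta_1}(z_1) - g_{\eta_2}(z_1))\bigr)\,ds,
\]
so the equality of orders reduces to checking $\partial_\zeta\Phi(\tau_1,\tau_2) \ne 0$. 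A direct chain-rule computation confirms this: the hypothesis $\tau_1, \tau_2 \ne -1$ ensures $\beta^{-1}$ is holomorphic at $\tau_2$ and $\beta$ is holomorphic at the finite real number $\beta^{-1}(\lambda_2) = (\beta^{-1}(\tau_2) - t\beta^{-1}(\tau_1))/(1-t)$, so both factors in $\partial_\zeta\Phi(\tau_1,\tau_2) = (1-t)^{-1} (\beta^{-1})'(\tau_2)\, \beta'(\beta^{-1}(\lambda_2))$ are nonzero. This nonvanishing is the main technical obstacle, in the sense that it is the one place the restriction $\tau_1, \tau_2\in \mathbb{T}\setminus\{-1\}$ enters essentially; the remainder of the argument is routine bookkeeping with the single-branch level set parameterization.
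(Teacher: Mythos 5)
Your proposal is correct and follows essentially the same route as the paper: both arguments rest on the level-set characterization of contact order from Remark \ref{rem:CO2} together with the bijection of unimodular level sets induced by $(z_1,z_2)\mapsto\bigl(z_1,\Phi(z_1,z_2)\bigr)$ coming from $f_A^t(z_1,z_2)=f_A(z_1,tz_1+(1-t)z_2)$, with the single-branch parameterization justified by $\deg_{z_2}=1$. The only difference is the final step: the paper compares the derivatives $g^{(\ell)}_{\eta_k}(\tau_1)$ order by order to get $K\le K_t$ and then runs the converse formulas to get $K_t\le K$, whereas you factor $h_{\eta_1}-h_{\eta_2}$ through $g_{\eta_1}-g_{\eta_2}$ and check $\partial_\zeta\Phi(\tau_1,\tau_2)\ne 0$ (which is where $\tau_1,\tau_2\ne -1$ enters, exactly as in the paper); this is a slightly more direct way of reaching the same conclusion.
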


\begin{proof} The fact that $f_A^t(z_1 ,z_2) = f_A (z_1, t z_1 +(1-t)z_2)$ and the connections between the $f_A, \phi_A$ and $f^t_A, \phi^t_A$ functions give
\[
\begin{aligned}
\phi_A^t (z_1, z_2) &= \phi_A \left( z_1, \beta\big( t \beta^{-1}(z_1) +(1-t) \beta^{-1}(z_2)\big)\right) \\
\phi_A (z_1, z_2) &= \phi_A^t \left( z_1, \beta\Big( \frac{ \beta^{-1}(z_2)-t\beta^{-1}(z_1)}{1-t}\Big)\right) 
\end{aligned}
\]
at every $(z_1,z_2) \in \overline{\mathbb{D}^2}$ with $z_1, z_2 \ne -1$. 

Assume that $p_A(\tau_1, \tau_2)=0$, so $p_A^t(\lambda_1, \lambda_2) =0$. We'll use the unimodular level set definition of contact order from Remark \ref{rem:CO2}. To that end, for each $\eta \in \mathbb{T}$, define
\[ 
\begin{aligned}
\mathcal{C}_\eta &= \text{Closure of } \left\{ (z_1, z_2) \in \mathbb{T}^2 : \phi_A(z_1, z_2) = \eta \right\}, \\
\mathcal{C}^t_\eta &= \text{Closure of } \left\{ (z_1, z_2) \in \mathbb{T}^2 : \phi_A^t(z_1, z_2) = \eta \right\}.
\end{aligned}
\]
Then for every $(z_1, z_2) \in \mathbb{T}^2$ near $(\tau_1, \tau_2)$, we have $(z_1, z_2) \in \mathcal{C}_\eta$ if and only if 
\[\left(z_1, \beta \left( \frac{\beta^{-1}(z_2)-t \beta^{-1}(z_1)}{1-t}\right)\right) \in \mathcal{C}^t_\eta.\]
Choose $\eta_1, \eta_2 \in \mathbb{T}$ generic for both $\phi_A$ and $\phi_A^t$. Because the degrees of $\phi_A$ and $\phi_A^t$ in $z_2$ are $1$, each $\mathcal{C}_{\eta_k}, \mathcal{C}^t_{\eta_k}$  can be parameterized by a single analytic function
 $z_2=g_{\eta_k}(z_1)$ or $z_2=g^t_{\eta_k}(z_1)$ and the relationship between $\mathcal{C}_\eta$ and $\mathcal{C}^t_\eta$ implies that 
\begin{equation} \label{eqn:gteta} g^t_{\eta_k}(z_1) = \beta \left( \frac{\beta^{-1}(g_{\eta_k}(z_1))-t \beta^{-1}(z_1)}{1-t}\right).  \end{equation}
 Let $K$ be the contact order of $\phi_A$ at $(\tau_1,\tau_2)$. Then  
\[
\begin{aligned}
K &= \text{Order of vanishing of } | g_{\eta_1}(z_1) - g_{\eta_2}(z_1) | \text{ at } \tau_1 \\
&= \min \left \{ L: \frac{d^L}{dz_1^L}\left(g_{\eta_1} - g_{\eta_2}\right)|_{z_1=\tau_1}\ne0\right\}\\
& =  \min \left\{L :g^{(L)}_{\eta_1}(\tau_1) \ne  g^{(L)}_{\eta_2}(\tau_1)\right\}.
\end{aligned}
\]
Let $K_t$ be the contact order of $\phi_A^t$ at $(\lambda_1,\lambda_2)$. Then  
\[ 
\begin{aligned}K_t &=  \text{Order of vanishing of } | g^{t}_{\eta_1}(z_1) - g^{t}_{\eta_2}(z_1) | \text{ at } \lambda_1  \\
& =  \min \left \{ L: \left(g^{t}_{\eta_1}\right)^{(L)}(\lambda_1) \ne  \left(g^{t}_{\eta_2}\right)^{(L)}(\lambda_1) \right\}.
\end{aligned}
\]
Using \eqref{eqn:gteta}, $\tau_1, \tau_2 \ne -1$, and $\tau_1 =\lambda_1$, we can see that if
\[g^{(\ell)}_{\eta_1}(\tau_1) =  g^{(\ell)}_{\eta_2}(\tau_1)\]
for $\ell=0, \dots, L-1$, then 
\[\left(g^{t}_{\eta_1}\right)^{(\ell)}(\lambda_1) =  \left(g^{t}_{\eta_2}\right)^{(\ell)}(\lambda_1) \]
for $\ell=0, \dots, L-1$ as well. This immediately implies that $K \le K_t.$
A similar argument (using the converse formulas) gives $K_t \le K$ and completes the proof.
\end{proof}

\subsection{Contact Order and Path Graphs}  \label{subsec:COpaths} In this subsection, we look specifically at stable polynomials constructed from path graphs and prove Theorem \ref{thm:COpaths}. Note that an $n$-vertex path graph has adjacency matrix $A_n$ as in Figure \ref{fig:pathgraph}.

\begin{center}
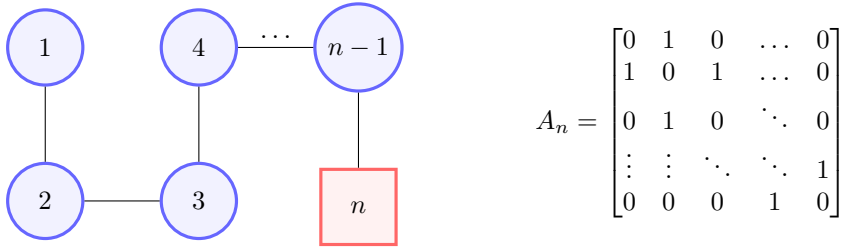
\begin{figure}[ht!]
\begin{minipage}{0.5\linewidth}
\begin{tikzpicture}[
roundnode/.style={circle, draw=blue!60, fill=blue!5, very thick, minimum size=10mm},
squarednode/.style={rectangle, draw=red!60, fill=red!5, very thick, minimum size=10mm}, 
]
\node[roundnode] (node1) {1};
\node[roundnode] (node2) [below=of node1] {2};
\node[roundnode] (node3) [right=of node2] {3};
\node[roundnode] (node4) [above=of node3] {4};
\node[roundnode] (node5) [right=of node4] {$n-1$};
\node[squarednode] (node6) [below=of node5] {$n$};

\draw[-] (node1.south) -- (node2.north);
\draw[-] (node2.east) -- (node3.west);
\draw[-] (node4.south) -- (node3.north);
\draw[-] (node4.east) -- node[midway, above] {$\hdots$} (node5.west);
\draw[-] (node5.south) -- (node6.north);
\end{tikzpicture}
\end{minipage}%
\begin{minipage}{0.4 \linewidth}
\[ A_n = \begin{bmatrix} 
0 & 1 & 0 & \hdots & 0 \\
1 & 0 & 1 & \hdots & 0 \\
0 & 1 & 0 & \ddots & 0 \\
\vdots & \vdots & \ddots & \ddots & 1 \\
0 & 0 & 0 & 1 & 0
\end{bmatrix} \].
\end{minipage}
\caption{An $n$-vertex path graph and its adjacency matrix}
\label{fig:pathgraph}
\end{figure}
\end{center}
Before proving  Theorem \ref{thm:COpaths}, we prove a related lemma. Below, for nonnegative integers $a,b$, we let ${{a} \choose {b}}$ be defined as follows:
\[{{a} \choose {b}} = \frac{a!}{b!(a-b)!} \text{ if } a \ge b \ \ \text{ and }  \ \ {{a} \choose {b}} = 0 \text{ if } a <b.\]
Then we have the following computation.

\begin{lemma} \label{lem:bionomial} Let $a_1, a_2, k \in \mathbb{N}$ with $a_1 >k$ and $a_2 \ge k$. Then $$\sum_{m=0}^{k} {{a_1 - m} \choose {m}}{{a_2 - (k-m)} \choose {k-m}} = \sum_{m=0}^{k} {{a_1 - 1 - m} \choose {m}}{{a_2 + 1 - (k-m)} \choose {k-m}}.$$
\end{lemma}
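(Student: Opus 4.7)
The plan is to recast the identity as a statement about generating polynomials and then exploit a Cassini-type recursion. Introduce the polynomials
\[
F_a(x) := \sum_{m\ge 0} \binom{a-m}{m}\, x^m,
\]
with the conventions $F_{-1}(x)=0$ and $F_0(x)=F_1(x)=1$. The two sides of the desired identity are exactly the coefficient of $x^k$ in $F_{a_1}(x)F_{a_2}(x)$ and in $F_{a_1-1}(x)F_{a_2+1}(x)$, respectively, so the problem reduces to showing that
\[
G(a_1,a_2) := F_{a_1}(x)F_{a_2}(x) - F_{a_1-1}(x)F_{a_2+1}(x)
\]
has vanishing coefficient at $x^k$ whenever $a_1 > k$ and $a_2 \ge k$.

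First I would use Pascal's identity $\binom{a-m}{m} = \binom{a-1-m}{m} + \binom{a-1-m}{m-1}$ to establish the three-term recurrence $F_a(x) = F_{a-1}(x) + xF_{a-2}(x)$. Substituting this recurrence into both factors of $G(a_1,a_2)$ and cancelling yields the Cassini-type identity
\[
G(a_1,a_2) = -x\, G(a_1-1, a_2-1).
\]
Iterating this relation reduces the problem to one of two base cases: $G(0,b) = F_b(x)$ and $G(a,-1) = -F_{a-1}(x)$.

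I would then split into three cases. If $a_2 \ge a_1$, iterating $a_1$ times gives $G(a_1,a_2) = (-x)^{a_1} F_{a_2-a_1}(x)$, whose lowest nonzero power of $x$ is $x^{a_1}$. If $a_2 = a_1 - 1$, then $G(a_1,a_2) = 0$ directly from the definition. If $a_2 \le a_1 - 2$, iterating $a_2+1$ times gives $G(a_1,a_2) = (-1)^{a_2}\, x^{a_2+1}\, F_{a_1-a_2-2}(x)$, whose lowest nonzero power of $x$ is $x^{a_2+1}$. In every case the smallest power of $x$ appearing in $G(a_1,a_2)$ is at least $\min(a_1,\, a_2+1)$, which by the hypotheses $a_1 > k$ and $a_2 \ge k$ strictly exceeds $k$; hence $[x^k]G(a_1,a_2)=0$.

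The main obstacle will be keeping the three cases straight and handling the boundary behavior of the iterated recursion carefully: one must verify that the conventions force $G(0,b)=F_b$ and $G(a,-1)=-F_{a-1}$, and confirm that $F_b(x)$ has a nonzero constant term for $b \ge 0$ so the claimed minimal degree of $G$ is correct in each case. Once that bookkeeping is complete, the hypotheses $a_1 > k$ and $a_2 \ge k$ translate directly into the required vanishing, giving a short and transparent proof.
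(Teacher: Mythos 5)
Your proposal is correct, and its engine is the same as the paper's even though the packaging and the endgame differ. The paper defines $\mathrm{Sub}(a_1,a_2,k)$ as the difference of the two sums and uses Pascal's formula termwise to derive $\mathrm{Sub}(a_1,a_2,k)=-\mathrm{Sub}(a_1-1,a_2-1,k-1)$, then inducts on $k$ down to the base case $k=0$. Your Cassini-type relation $G(a_1,a_2)=-x\,G(a_1-1,a_2-1)$ is exactly the generating-function form of that same recursion (equate coefficients of $x^k$), and your derivation of it via $F_a=F_{a-1}+xF_{a-2}$ is Pascal's identity in disguise. Where you genuinely diverge is in how the recursion is terminated: instead of inducting on $k$, you iterate on the pair $(a_1,a_2)$ all the way to the boundary values and evaluate $G$ there explicitly, obtaining the closed form $G(a_1,a_2)=\pm x^{\min(a_1,\,a_2+1)}F_{|a_1-a_2-1|-\ldots}$ (up to the three cases you list). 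This buys something the paper's argument does not: it exhibits the exact order of vanishing of the difference, so one sees not only that the coefficient of $x^k$ vanishes under the hypotheses $a_1>k$, $a_2\ge k$ but also precisely when it fails to vanish; the hypotheses become visibly sharp. The cost is the bookkeeping you flag yourself (conventions $F_{-1}=0$, the two distinct base cases, and checking $F_b(0)=1$ for $b\ge 0$), all of which checks out. The paper's version is marginally more self-contained, since it never leaves the world of finite sums.
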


\begin{proof} For ease of notation, define 
\[
\begin{aligned} \text{Sub}(a_1,a_2,k)&:= \sum_{m=0}^{k} {{a_1 - m} \choose {m}}{{a_2 - (k-m)} \choose {k-m}} \\
&\hspace{.5in} - \sum_{m=0}^{k} {{a_1 - 1 - m} \choose {m}}{{a_2 + 1 - (k-m)} \choose {k-m}}.\end{aligned} \]
We will prove $\text{Sub}(a_1,a_2,k) =0$ for all $a_1, a_2, k$ with $a_1 >k$ and $a_2 \ge k$ by induction on $k$. For the base case $k=0$, fix any $a_1, a_2 \in \mathbb{N}$ with $a_1 >0$ and $a_2 \ge 0$ and observe that
$$\text{Sub}(a_1,a_2,0) = {{a_1 } \choose {0}}{{a_2 } \choose {0}} - {{a_1 - 1 } \choose {0}}{{a_2 + 1 } \choose {0}} = 1 \cdot 1 - 1 \cdot 1 = 0.$$

For the inductive step, assume that $\text{Sub}(c_1,c_2,k-1) =0$ for all $c_1, c_2 \in \mathbb{N}$ with $c_1 >k-1$ and $c_2 \ge k-1$. Fix $a_1 >k$ and $a_2 \ge k$. We will show  $\text{Sub}(a_1,a_2,k) =0.$ As part of that argument, we use Pascal's formula, which holds for all $j, \ell >0$ (though if $j <\ell$, then the coefficient is $0$):
\begin{equation} \label{eqn:pascal} {{j}\choose{\ell}} = {{j-1}\choose{\ell}} + {{j-1}\choose{\ell-1}}.\end{equation}
Using the formula for $\text{Sub}(a_1,a_2,k)$, extracting one term from each sum, using the Pascal's triangle formula on each sum, and canceling any common terms from the two sums gives:

\begin{align*}
\text{Sub}&(a_1,a_2,k) = \sum_{m=0}^{k} {{a_1 - m} \choose {m}}{{a_2 - (k-m)} \choose {k-m}}\\
&\hspace{.75in}- \sum_{m=0}^{k} {{a_1 - 1 - m} \choose {m}}{{a_2 + 1 - (k-m)} \choose {k-m}} \\
&= {{a_2 - k} \choose k} + \sum_{m=1}^{k} \left( {{a_1 - m - 1}\choose{m}}+ {{a_1 - m - 1}\choose{m-1}}\right) {{a_2 - (k-m)} \choose {k-m}}  \\
& \quad - \sum_{m=0}^{k-1}{{a_1 - m - 1}\choose{m}}\left({{a_2 - (k-m)} \choose {k-m}} + {{a_2 - (k-m)} \choose {k-m-1}}\right) - {{a_1-1-k}\choose{k}}  \\
&= {{a_2-k}\choose{k}} - {{a_1 - 1 - k} \choose k} + {{a_1 - 1 - k} \choose k} - {{a_2-k}\choose{k}} \\
& \quad + \sum_{m=1}^{k}{{a_1 - m - 1}\choose{m-1}}{{a_2 - (k-m)} \choose {k-m}} - \sum_{m=0}^{k-1}{{a_1 - m - 1}\choose{m}}{{a_2 - (k-m)} \choose {k-m-1}}. 
\end{align*}
Canceling terms and re-indexing the first sum gives:
\begin{align*}
\text{Sub}(a_1,a_2,k)&= \sum_{m=0}^{k-1}{{(a_1 - 1) - m - 1}\choose{m}}{{a_2 - (k-1-m)} \choose {k-1-m}}  \\
& \qquad  \qquad - \sum_{m=0}^{k-1}{{a_1 - 1-m}\choose{m}}{{a_2 -1- (k-1-m)} \choose {k-1-m}} \\
&= -\text{Sub}(a_1-1,a_2-1,k-1 ) =0,\end{align*}
by the inductive hypothesis. This completes the proof.
\end{proof}

Now we can prove Theorem \ref{thm:COpaths}:

\begin{proof} Since $v_1$ and $v_n$ are path connected, Theorem $\ref{thm:1}$ implies that $p_{A_n}(-1,1)=0.$ We will use Remark \ref{rem:cor} to show that the contact order of $p_A$ at $(-1,1)$ is $2n-2$. First write
\[ g_{A_n}(z_1, z_2) : = f_{A_n} \left(-\tfrac{1}{z_1}, z_2 \right) = \frac{q_1}{q_2}(z_1, z_2),  \]
where the polynomials $q_1, q_2$ have no common factors of $z_1$ or common factors involving $z_2$ and define $r_1, r_2 \in \mathbb{C}[z]$ so that 
\begin{equation} \label{eqn:r1r2} \left( q_2- i q_1\right) (z_1, z_2) 
= r_1 (z_1) + z_2 r_2(z_1). 
\end{equation}
Then the contact order of $p_{A_n}$ at $(-1,1)$ is the order of vanishing of $\IM(r_1(x) \overline{r_2(x)})$ near $0$. To compute that, let $\widehat{z}_Y$ be the diagonal matrix (of appropriate size) with entries $-\frac{1}{z_1}, -\frac{1}{z_1}, \dots, -\frac{1}{z_1}, z_2$ along the diagonal and observe that for $n >3$,
\begin{equation} \label{eqn:gan}
g_{A_n}(z_1,z_2) = \frac{\det(A_{n-1} - \widehat{z}_Y)}{\det(A_{n} - \widehat{z}_Y)} =  \frac{-z_2\det(A_{n-2} +\frac{1}{z_1}I)-\det(A_{n-3} +\frac{1}{z_1}I) }{-z_2\det(A_{n-1} +\frac{1}{z_1}I)-\det(A_{n-2} +\frac{1}{z_1}I) },\end{equation}
where we used the structure of the $A_n$ matrices. This shows that we will need a formula for
\[ G(n, z_1):=\det(A_n - z_1 I).\]
Direct computation gives $G(1, z_1)=-z_1,$ $G(2, z_1)= z_1^2 - 1$, and the structure of the $A_n$ matrices gives the recursive formula 
\[ G(n,z_1) = -z_1 G(n-1, z_1) - G(n-2, z_1).\]
We claim that 
\[ G(n, z_1)=
\begin{cases}\displaystyle
\sum_{m=0}^{\frac{n-1}{2}} {{\frac{n+1}{2}+m}\choose{\frac{n-1}{2}-m}} (-1)^{\frac{n+1}{2}-m}z_1^{2m+1}, & \textrm{for $n$ odd},\\
\displaystyle \sum_{m=0}^{\frac{n}{2}} {{\frac{n}{2}+m}\choose{\frac{n}{2}-m}} (-1)^{\frac{n}{2}-m}z_1^{2m}, & \textrm{ for $n$ even.}
\end{cases}\]
Our earlier formulas imply that this holds for $n=1,2$ and we prove the result via induction for $n \ge 3$. Specifically, we assume that the formula holds for $n=k-2$ and $n=k-1$ and we show that it holds for $n=k$.  Since the cases are almost identical, we show the case when $k$ is odd and leave the case when $k$ is even to the reader. 

Since $k$ is odd, we know $k-1$ is even and $k-2$ is odd. Applying the inductive hypothesis, extracting the last term of the first sum, using Pascal's formula \eqref{eqn:pascal} to combine the two sums, and then adding the final term back in gives
\begin{align*}
G(k, z_1) &= -z_1 G(k-1, z_1) - G(k-2, z_1) \\ 
&= - \sum_{m=0}^{\frac{k-1}{2}} {{\frac{k-1}{2} + m}\choose{\frac{k-1}{2} - m}} (-1)^{\frac{k-1}{2}-m} z_1^{2m+1} 
-\sum_{m=0}^{\frac{k-3}{2}} {{\frac{k-1}{2} + m}\choose{\frac{k-3}{2} - m}} (-1)^{\frac{k-1}{2}-m} z_1^{2m+1}\\
&= \sum_{m=0}^{\frac{k-3}{2}} {{\frac{k+1}{2} + m}\choose{\frac{k-1}{2} - m}} (-1)^{\frac{k+1}{2}-m} z_1^{2m+1} - z_1^{k}\\
&= \sum_{m=0}^{\frac{k-1}{2}} {{\frac{k+1}{2} + m}\choose{\frac{k-1}{2} - m}} (-1)^{\frac{k+1}{2}-m} z_1^{2m+1},
\end{align*}
as needed. 

To study $g_{A_n}$, we consider
\[
\begin{aligned} h(n,z_1):= z_1^n G\left (n,\tfrac{1}{z_1}\right) &=\begin{cases} \displaystyle
\sum_{m=0}^{\frac{n-1}{2}} {{\frac{n+1}{2}+m}\choose{\frac{n-1}{2}-m}} (-1)^{\frac{n+1}{2}-m}z_1^{n - 2m - 1}, & \textrm{for $n$ odd},\\
\displaystyle \sum_{m=0}^{\frac{n}{2}} {{\frac{n}{2}+m}\choose{\frac{n}{2}-m}} (-1)^{\frac{n}{2}-m}z_1^{n - 2m}, & \textrm{for $n$ even}
\end{cases}\\
& =\begin{cases} \displaystyle
\sum_{m=0}^{\frac{n-1}{2}} {{n - m}\choose{m }} (-1)^{m+1 }z_1^{2m}, & \textrm{for $n$ odd,}\\
 \displaystyle \sum_{m=0}^{\frac{n}{2}} {{n - m}\choose{m}} (-1)^{m}z_1^{2m} & \textrm{for $n$ even,}
\end{cases}
\end{aligned}\]
where the second line following by re-indexing the sums (basically replacing $m$ with $\frac{n-1}{2} - m$ for odd cases, and $m$ with $\frac{n}{2} - m$ for even cases). Then from \eqref{eqn:gan}
\begin{align*}
g_{A_n}(z_1,z_2) &= \frac{-z_2G(n-2,-\frac{1}{z_1}) - G(n-3,-\frac{1}{z_1})}{-z_2G(n-1,-\frac{1}{z_1}) - G(n-2,-\frac{1}{z_1})} \\
&= \frac{z_1^{n-1}\big(z_2G(n-2,-\frac{1}{z_1}) + G(n-3,-\frac{1}{z_1})\big)}{z_1^{n-1}\big(z_2G(n-1,-\frac{1}{z_1}) + G(n-2,-\frac{1}{z_1})\big)} \\
&= \frac{z_1 z_2h(n-2,-z_1) + z^2_1 h(n-3,-z_1)}{z_2h(n-1,-z_1) + z_1 h(n-2,-z_1)}\\
&=\frac{z_1z_2 h(n-2,z_1) + z^2_1 h(n-3,z_1)}{z_2h(n-1,z_1) +z_1 h(n-2,z_1)},
\end{align*}
since the powers of $z_1$ in the formula for $h(n,z_1)$ are always even. The numerator and denominator of the above fraction give us $q_1$ and $q_2$; specifically, we can see that they have no common factor involving $z_2$ since $p_{A_n}$ has a zero at $(-1,1)$ and no common factor of $z_1$ since $h(n-1,z_1)$ has a non-zero constant term. Then, we can compute
\[ 
\begin{aligned}\left( q_2- i q_1\right)& (z_1, z_2) 
= z_2 h(n-1,z_1) +z_1 h(n-2,z_1) 
\\ & \hspace{1in} - i\Big( z_1z_2 h(n-2,z_1)  + z^2_1 h(n-3,z_1)\Big) \\
&=\big( z_1 h(n-2,z_1)- iz^2_1 h(n-3,z_1) \big)+z_2\big( h(n-1,z_1) - iz_1 h(n-2,z_1)\big).
\end{aligned}
\]
This gives us $r_1$ and $r_2$ via \eqref{eqn:r1r2} and so, we just need to compute the order of vanishing of 
\[\IM(r_1(x) \overline{r_2(x)}) = x^2 h(n-2,x)h(n-2,x) -x^2h(n-1,x)h(n-3,x),
\]
Note that this requires $n >3$. The cases $n=2,3$ can be easily checked by hand.
As before, we assume that $n$ is odd, since the even case can be handled very similarly.  Then
\begin{align*}
-\IM(r_1(x) \overline{r}_2(x))  
&= x^2\sum_{m=0}^{\frac{n-1}{2}} {{n-1-m}\choose{m}} (-1)^mx^{2m} \sum_{j=0}^{\frac{n-3}{2}} {{n-3-j}\choose{j}} (-1)^jx^{2j} \\
&- x^2 \sum_{m=0}^{\frac{n-3}{2}} {{n-2-m}\choose{m}} (-1)^{m+1} x^{2m} \sum_{j=0}^{\frac{n-3}{2}} {{n-2-j}\choose{j}} (-1)^{j+1}x^{2j} \\
&= x^2\sum_{k=0}^{n-2} c_k x^{2k},
\end{align*}
for coefficients $c_k \in \mathbb{R}$. We claim that all coefficients are $0$, except $c_{n-2}$. First note that
\[c_{n-2} = {{\tfrac{n-1}{2}}\choose{\frac{n-1}{2}}}{{\tfrac{n-3}{2}}\choose{\frac{n-3}{2}}}(-1)^{n-2}  = (-1)^{n-2} = -1. \]
To show that the other coefficients are zero, we consider two cases. First, if $0 \le k \le \frac{n-3}{2},$ then $c_k$ equals
\[ (-1)^{k}\sum_{m=0}^{k} \left( {{n-1-m}\choose{m}}  {{n-3-(k-m)}\choose{k-m}}-
{{n-2-m}\choose{m}}  {{n-2-(k-m)}\choose{k-m}}\right), \]
which is $0$ by Lemma \ref{lem:bionomial}. Meanwhile, if $\frac{n-1}{2} \le k \le n-3$, then we have some restrictions on the terms that appear in the formula for $c_k.$ Namely, one can check that $c_k$ equals $(-1)^k$ times
\[\sum_{m=k-\frac{n-3}{2}}^{\frac{n-1}{2}} \hspace{-.08in} {{n-1-m}\choose{m}}  {{n-3-(k-m)}\choose{k-m}}-\hspace{-.08in} \sum_{m=k-\frac{n-3}{2}}^{\frac{n-3}{2}} \hspace{-.08in} {{n-2-m}\choose{m}}  {{n-2-(k-m)}\choose{k-m}}. \]
However, when $m$ is both outside of the current sum bounds and satisfies $0\le m \le k$, the binomial coefficient formulas in the summands are both well defined and equal zero.  Thus, we can also write the formula for $c_k$ as 
\[ (-1)^{k}\sum_{m=0}^{k} \left( {{n-1-m}\choose{m}}  {{n-3-(k-m)}\choose{k-m}}-
{{n-2-m}\choose{m}}  {{n-2-(k-m)}\choose{k-m}}\right), \]
which is again $0$ by Lemma \ref{lem:bionomial}. This means that for $n$ odd, 
\[\IM(r_1(x) \overline{r_2(x)}) = x^{2n-2}\]
and so the contact order of $p_{A_n}$ at $(-1,1)$ is $2n-2$. An analogous argument gives the result for $n$ even.
\end{proof}

\subsection{Contact Order with Graph Modifications} \label{subsec:GM}

In this section, we investigate Conjecture \ref{con:CO} and prove Theorem \ref{thm:CO2}, which gives three cases that support the conjecture. We split the proof into the three different cases below.

Throughout these proofs we will use the following notation: for an arbitrary $n\times n$ matrix $M$, we let $\overline{M}$ denote the $(n-1) \times (n-1)$ matrix obtained from $M$ by removing its first row and first column, $\underline{M}$ 
denote the $(n-1) \times (n-1)$ matrix obtained from $M$ by removing its last row and last column, and $\overline{\underline{M}}$ denote the $(n-2) \times (n-2)$ matrix obtained from $M$ by removing both its first row and first column and its last row and last column.

For the first case in Theorem \ref{thm:CO2}, let $G_A$ be an $n$-vertex graph (as always, undirected and simple) with the vertices $v_1$ and $v_n$ path connected. We define a new graph $G_{\widehat{A}}$ as the graph we get from $G_A$ by adding a vertex $v_{n+1}$ that is only connected directly to $v_n.$ Specifically, $G_{\widehat{A}}$ has the adjacency matrix  
\begin{equation}\label{eqn:A_hat}
    \widehat{A} = \left[
\begin{array}{c|c}
A & \begin{array}{c} 0 \\ \vdots \\ 0 \\ 1 \end{array} \\
\hline
\begin{array}{ccccc} 0 & \cdots & 0 & 1\end{array} & 0
\end{array}
\right].
\end{equation}
An example of this construction is given in Figure \ref{fig:extendtail} below. 
\begin{figure}[ht!] 
\begin{minipage}{0.30\linewidth}
\begin{center}
\begin{tikzpicture}[
roundnode/.style={circle, draw=blue!60, fill=blue!5, very thick, minimum size=7mm},
squarednode/.style={rectangle, draw=red!60, fill=red!5, very thick, minimum size=7mm}, 
]
\node[roundnode]      (node1)                              {1};
\node[roundnode]        (node2)       [below=of node1] {2};
  \node[roundnode]    (node3)       [right=of node2] {3};
 \node[squarednode]       (node4)       [right=of node1] {4};
\draw[-] (node1.south) -- (node2.north);
\draw[-] (node2.east) -- (node3.west);
\draw[-]  (node4.west) -- (node1.east);
\draw[-]  (node4.south) -- (node3.north);
\end{tikzpicture}
\end{center}
\end{minipage}%
\begin{minipage}{0.03\linewidth}
$\rightarrow$
\end{minipage}
\begin{minipage}{0.35\linewidth}
\begin{center}
\begin{tikzpicture}[
roundnode/.style={circle, draw=blue!60, fill=blue!5, very thick, minimum size=7mm},
squarednode/.style={rectangle, draw=red!60, fill=red!5, very thick, minimum size=7mm}, 
]
\node[roundnode]      (node1)                              {1};
\node[roundnode]        (node2)       [below=of node1] {2};
  \node[roundnode]    (node3)       [right=of node2] {3};
 \node[roundnode]       (node4)       [right=of node1] {4};
  \node[squarednode]       (node5)       [right=of node4] {5};
\draw[-] (node1.south) -- (node2.north);
\draw[-] (node2.east) -- (node3.west);
\draw[-]  (node4.west) -- (node1.east);
\draw[-]  (node5.west) -- (node4.east);
\draw[-]  (node4.south) -- (node3.north);
\end{tikzpicture}
\end{center}
\end{minipage}%

\caption{An example of the graph modification $G_A$ to $G_{\widehat{A}}$ for Theorem \ref{thmCO:i}}
\label{fig:extendtail}
\end{figure}

Then we have the following result.

\begin{theorem} \label{thmCO:i} Let $G_A$ be an $n$-vertex graph with $v_1$ and $v_n$ path connected. Assume that the contact order of $p_A$ at $(-1,1)$ is $K$. 
Then, with $\widehat{A}$ defined in \eqref{eqn:A_hat}, the contact order of $p_{\widehat{A}}$ at $(-1,1)$ is $K+2$.
\end{theorem}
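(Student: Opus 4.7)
The plan is to use Remark \ref{rem:cor} to compute the contact orders of $\phi_A$ and $\phi_{\widehat A}$ at $(-1,1)$ in parallel, and then to show that the imaginary-part polynomials controlling the two contact orders differ exactly by a factor of $x^2$.

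First I would derive determinantal formulas for $f_A$ and $f_{\widehat A}$ via Lemma \ref{lem:one-one elem of inverse matrix}. Introduce
\[
P(z_1) = \det(A - z_1 I_n), \quad Q(z_1) = \det(\underline A - z_1 I_{n-1}),
\]
\[
P'(z_1) = \det(\overline A - z_1 I_{n-1}), \quad Q'(z_1) = \det(\overline{\underline A} - z_1 I_{n-2}).
\]
Writing $A - z_Y = (A - z_1 I_n) + (z_1 - z_2) e_n e_n^T$ as a rank-one perturbation and applying the matrix determinant lemma gives
\[
f_A(z_1, z_2) = \frac{P'(z_1) + (z_1 - z_2) Q'(z_1)}{P(z_1) + (z_1 - z_2) Q(z_1)}.
\]
Meanwhile $\widehat A - z_{\widehat Y}$ has $A - z_1 I_n$ (not $A - z_Y$, since $v_n$ is now blue) in its top-left block, $e_n$ as its new column, $e_n^T$ as its new row, and $-z_2$ in the bottom-right; a Schur-complement expansion along the new last row and column yields
\[
f_{\widehat A}(z_1, z_2) = \frac{z_2 P'(z_1) + Q'(z_1)}{z_2 P(z_1) + Q(z_1)}.
\]

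Next, substitute $z_1 \mapsto -1/z_1$ per Remark \ref{rem:cor} and introduce the reverse polynomials $\pi(z_1) = z_1^n P(-1/z_1)$, $\sigma(z_1) = z_1^{n-1} Q(-1/z_1)$, $\tilde\pi(z_1) = z_1^{n-1} P'(-1/z_1)$, and $\tilde\sigma(z_1) = z_1^{n-2} Q'(-1/z_1)$. Since $P$ and $Q$ have leading coefficients $(-1)^n$ and $(-1)^{n-1}$ respectively, $\pi(0) = \sigma(0) = 1$, hence $\pi - \sigma = z_1 \rho$ for some polynomial $\rho$. Multiplying through by $z_1^n$ produces
\[
g_A = \frac{z_1 \bigl[(\tilde\pi - \tilde\sigma) - z_1 z_2 \tilde\sigma\bigr]}{z_1 \bigl[\rho - z_2 \sigma\bigr]}, \qquad g_{\widehat A} = \frac{z_1 \bigl[z_2 \tilde\pi + z_1 \tilde\sigma\bigr]}{z_2 \pi + z_1 \sigma}.
\]
The key observation is that the $z_1$ factor cancels in $g_A$ but not in $g_{\widehat A}$: the denominator $z_2 \pi + z_1 \sigma$ equals $z_2$ at $z_1 = 0$ and so carries no $z_1$ factor.

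Decomposing each $q_2 - i q_1$ in the form $r_1(z_1) + z_2 r_2(z_1)$ yields $r_1^A = \rho - i(\tilde\pi - \tilde\sigma)$, $r_2^A = -\sigma + iz_1 \tilde\sigma$, $r_1^{\widehat A} = z_1 \sigma - iz_1^2 \tilde\sigma$, and $r_2^{\widehat A} = \pi - iz_1 \tilde\pi$. A direct expansion, using $x \rho(x) = \pi(x) - \sigma(x)$ to simplify the $A$-case, gives
\[
\IM(r_1^A(x) \overline{r_2^A(x)}) = \sigma(x) \tilde\pi(x) - \pi(x) \tilde\sigma(x),
\]
\[
\IM(r_1^{\widehat A}(x) \overline{r_2^{\widehat A}(x)}) = x^2 \bigl[\sigma(x) \tilde\pi(x) - \pi(x) \tilde\sigma(x)\bigr].
\]
Since these differ by exactly $x^2$, the orders of vanishing at $0$ differ by $2$, giving contact order $K + 2$ for $p_{\widehat A}$. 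The main obstacle is tracking the asymmetric $z_1$ cancellation and verifying that the resulting $q_1, q_2$ in each case share no further common factors of $z_1$ or factors involving $z_2$, as required by Remark \ref{rem:cor}; this single mismatch is precisely what accounts for the $+2$ gap in contact order.
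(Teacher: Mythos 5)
Your proposal is correct and follows essentially the same route as the paper's proof: both apply Remark \ref{rem:cor}, express $f_A$ and $f_{\widehat A}$ through the four determinants $\det(A-wI)$, $\det(\underline{A}-wI)$, $\det(\overline{A}-wI)$, $\det(\overline{\underline{A}}-wI)$, and then verify that $\IM(\hat r_1\overline{\hat r_2})=x^2\,\IM(r_1\overline{r_2})$. Your use of the rank-one update and Schur complement in place of the paper's Laplace expansions, and your reverse-polynomial notation $\pi,\sigma,\tilde\pi,\tilde\sigma,\rho$ in place of the paper's $s_1,s_2$, are only cosmetic differences (indeed $\rho=-z_1^{n-1}s_2(1/z_1)$ and $\tilde\pi-\tilde\sigma=-z_1^{n-1}s_1(1/z_1)$).
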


\begin{proof} We will use Remark \ref{rem:cor} to obtain tractable formulas for the contact orders of $p_A$ and $p_{\widehat{A}}$ at $(-1,1)$ that will allow us to compare the two quantities directly. To that end, we consider polynomials $q_1, q_2, \hat{q}_1, \hat{q_2}$ that satisfy
\[ g_A(z_1, z_2) : = f_A \left(-\tfrac{1}{z_1}, z_2 \right) = \frac{q_1}{q_2} \ \ \text{ and } \ \ g_{\widehat{A}}(z_1, z_2) : = f_{\widehat{A}}\left(-\tfrac{1}{z_1}, z_2 \right) = \frac{\hat{q}_1}{\hat{q}_2}  \]
and the pairs $(q_1, q_2)$ and $(\hat{q}_1, \hat{q}_2)$ do not have any common factors of $z_1$ or common factors involving $z_2.$
Then if we define $r_1, r_2, \hat{r}_1, \hat{r}_2 \in \mathbb{C}[z]$ so that they satisfy 
\begin{equation} \label{eqn:qrconn} \left( q_2- i q_1\right) (z_1, z_2) 
= r_1 (z_1) + z_2 r_2(z_1) 
\ \ \text{ and } 
\ \ \left( \hat{q}_2- i \hat{q}_1\right) (z_1, z_2) = \hat{r}_1 (z_1) + z_2 \hat{r}_2(z_1), \end{equation}
$K$ is the order of vanishing of $\IM( r_1(x) \overline{r_2(x)})$ near $0$ and the contact order of $p_{\widehat{A}}$ at $(-1,1)$ is the order of vanishing of $\IM( \hat{r}_1(x) \overline{\hat{r}_2(x)})$ near $0$. 

Now we find formulas for $q_1, q_2, \hat{q}_1, \hat{q_2}$. Let $\widehat{z}_Y$ be the diagonal matrix (of appropriate size) with entries $-\frac{1}{z_1}, -\frac{1}{z_1}, \dots, -\frac{1}{z_1}, z_2$ along the diagonal. Then
\begin{equation}\label{eqn:gAcok} g_A(z_1, z_2) = \frac{\det(\overline{A} - \widehat{z}_Y)}{\det(A - \widehat{z}_Y)} = \frac{ z_1^{n-1} \left( -z_2 \det\left(\overline{\underline{A}} + \tfrac{1}{z_1}I\right) -s_1\left( \tfrac{1}{z_1}\right) \right) }{ z_1^{n-1} \left( -z_2 \det\left(\underline{A} + \tfrac{1}{z_1}I\right) -s_2\left( \tfrac{1}{z_1}\right) \right) },
\end{equation}
where we expanded the determinant using the Laplace expansion method along the last column and last row of $(A- \widehat{z}_Y)$ and $(\overline{A}- \widehat{z}_Y)$ and where $s_1, s_2 \in \mathbb{R}[z]$ satisfy $\deg s_1 \le n-3$ and $\deg s_2 \le n-2$. Then we can take the numerator and denominator of \eqref{eqn:gAcok} to be $q_1, q_2$, since the denominator does not vanish identically when $z_1=0$ (so there is no common factor of $z_1$) and vertices $v_1$ and $v_n$ are path connected (so a factor involving $z_2$ cannot cancel). 

For $g_{\widehat{A}}$, we can use Laplace expansion first along the last row and column to reduce to a formula involving $A$ and then apply Laplace expansion again (in a way analogous to that for $g_A$) to obtain
\[ \begin{aligned} 
  g_{\widehat{A}}(z_1, z_2) &= \frac{\det\left (\overline{\widehat{A}} - \widehat{z}_Y\right)}{\det\left(\widehat{A} - \widehat{z}_Y\right)} = \frac{ -z_2 \det \left(\overline{A} + \tfrac{1}{z_1} I\right) - \det\left(\overline{\underline{A}} +\tfrac{1}{z_1}I\right)}{ -z_2 \det \left(A + \tfrac{1}{z_1} I\right) - \det\left(\underline{A} +\tfrac{1}{z_1} I\right)} \\
&= \frac{ z_1^n\left( z_2 \left( {-} \frac{1}{z_1} \det \left(\overline{\underline{A}} + \tfrac{1}{z_1} I\right) + s_1\left( \frac{1}{z_1} \right) \right) - \det\left(\overline{\underline{A}} +\tfrac{1}{z_1}I\right)\right)}{ z_1^n\left(z_2 \left( {-} \frac{1}{z_1}\det \left(\underline{A} + \tfrac{1}{z_1} I\right) +s_2\left(\frac{1}{z_1}\right) \right) -\det\left(\underline{A} +\tfrac{1}{z_1}I\right)\right)}. \label{eqn:gAcok2}
\end{aligned} \]
As before, we can take the numerator and denominator of the above expression to be $\hat{q}_1, \hat{q}_2$. From this and \eqref{eqn:qrconn}, we can easily identify $r_1, r_2, \hat{r}_1,$ and $\hat{r}_2:$
\[
\begin{aligned}
r_1(z_1)&= -z_1^{n-1} \left( s_2\left( \tfrac{1}{z_1}\right) - i    s_1\left( \tfrac{1}{z_1}\right) \right) \\
r_2(z_1) &=  -z_1^{n-1} \left( \det\left(\underline{A} + \tfrac{1}{z_1}I\right)- i \det\left(\overline{\underline{A}} + \tfrac{1}{z_1}I\right) \right) \\
\hat{r}_1(z_1)&=-z_1^{n}\left( \det\left(\underline{A} +\tfrac{1}{z_1}I\right) - i \det\left(\overline{\underline{A}} +\tfrac{1}{z_1}I\right)\right) \\
\hat{r}_2(z_1)& =z_1^{n}\left( {-} \tfrac{1}{z_1}\det \left(\underline{A} + \tfrac{1}{z_1} I\right) +s_2\left(\tfrac{1}{z_1}\right) - i \left({-} \tfrac{1}{z_1} \det \left(\overline{\underline{A}} + \tfrac{1}{z_1} I\right) + s_1\left( \tfrac{1}{z_1} \right) \right) \right).
\end{aligned}
\]
By assumption, we know for $x \in \mathbb{R}$ near $0$,
\begin{equation} \label{eqn:coks1s2} x^K \approx \IM( r_1(x) \overline{r_2(x)})  = x^{2n-2} \left(s_2\left( \tfrac{1}{x}\right)\det\left(\overline{\underline{A}} + \tfrac{1}{x}I\right) - s_1\left( \tfrac{1}{x}\right) \det\left(\underline{A} + \tfrac{1}{x}I\right)\right).  \end{equation}
Then to compute the contact order of $p_{\widehat{A}}$ at $(-1,1)$, just observe that for $x\in \mathbb{R}$ near $0$,
\[
\begin{aligned}
\IM( \hat{r}_1(x) \overline{\hat{r}_2(x)}) &= -x^{2n} \Big( \det\left(\underline{A} +\tfrac{1}{x}I\right)\left(- \tfrac{1}{x} \det \left(\overline{\underline{A}} + \tfrac{1}{x} I\right) + s_1\left( \tfrac{1}{x} \right) \right) \\
& \qquad -\det\left(\overline{\underline{A}} +\tfrac{1}{x}I\right)\left( -\tfrac{1}{x}\det \left(\underline{A} + \tfrac{1}{x} I\right) +s_2\left(\tfrac{1}{x}\right) \right) \Big) \\
&= x^2x^{2n-2} \left(s_2\left( \tfrac{1}{x}\right)\det\left(\overline{\underline{A}} + \tfrac{1}{x}I\right) - s_1\left( \tfrac{1}{x}\right) \det\left(\underline{A} + \tfrac{1}{x}I\right)\right) \approx x^{2+K},
\end{aligned}
\]
by \eqref{eqn:coks1s2}. \end{proof}

    For the second case in Theorem \ref{thm:CO2}, let $G_A$ be an $n$-vertex graph with $v_1$ and $v_n$ path connected.  Create $G_{\widetilde{A}}$ from $G_A$ by adding a vertex $v_{0}$ that is only connected directly to $v_1$ and then renumbering the vertices to $v_1,\dots, v_{n+1}$. Then $G_{\widetilde{A}}$ has the adjacency matrix  
    \begin{equation}\label{eqn:A_tilde}
        \widetilde{A} = \left[
\begin{array}{c|c}
0 & \begin{array}{cccc} 1 & 0 &\cdots & 0 \end{array} \\
\hline
\begin{array}{c}1 \\ 0 \\ \vdots \\ 0 \end{array} & A
\end{array}
\right].
    \end{equation}
This construction is illustrated in Figure \ref{fig:extendhead} below.

\begin{figure}[ht!] 
\begin{minipage}{0.30\linewidth}
\begin{center}
\begin{tikzpicture}[
roundnode/.style={circle, draw=blue!60, fill=blue!5, very thick, minimum size=7mm},
squarednode/.style={rectangle, draw=red!60, fill=red!5, very thick, minimum size=7mm}, 
]
\node[roundnode]      (node1)                              {1};
\node[roundnode]        (node2)       [below=of node1] {2};
  \node[roundnode]    (node3)       [right=of node2] {3};
 \node[squarednode]       (node4)       [right=of node1] {4};
\draw[-] (node1.south) -- (node2.north);
\draw[-] (node2.east) -- (node3.west);
\draw[-]  (node4.west) -- (node1.east);
\draw[-]  (node4.south) -- (node3.north);
\end{tikzpicture}
\end{center}
\end{minipage}%
\begin{minipage}{0.05\linewidth}
$\rightarrow$
\end{minipage}
\begin{minipage}{0.35\linewidth}
\begin{center}
\begin{tikzpicture}[
roundnode/.style={circle, draw=blue!60, fill=blue!5, very thick, minimum size=7mm},
squarednode/.style={rectangle, draw=red!60, fill=red!5, very thick, minimum size=7mm}, 
]
\node[roundnode]      (node1) {1}; 
\node[roundnode]      (node2) [right=of node1]{2};
\node[roundnode]        (node3)       [below=of node2] {3};
  \node[roundnode]    (node4)       [right=of node3] {4};
 \node[squarednode]       (node5)       [right=of node2] {5};
\draw[-]  (node2.west) -- (node1.east);
\draw[-] (node2.south) -- (node3.north);
\draw[-] (node3.east) -- (node4.west);
\draw[-]  (node5.west) -- (node2.east);
\draw[-]  (node5.south) -- (node4.north);
\end{tikzpicture}
\end{center}
\end{minipage}%
\caption{An example of the graph modification $G_A$ to $G_{\widetilde{A}}$ for Theorem  \ref{thmCO:ii}}
\label{fig:extendhead}
\end{figure}
    
We then have the following result.

\begin{theorem} \label{thmCO:ii} Let $G_A$ be an $n$-vertex graph with vertices $1$ and $n$ path connected. Assume that the contact order of $p_A$ at $(-1,1)$ is $K$. Then, with $\widetilde{A}$ defined in \eqref{eqn:A_tilde}, the contact order of $p_{\widetilde{A}}$ at $(-1,1)$ is $K+2$.
\end{theorem}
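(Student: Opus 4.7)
The plan is to adapt the strategy of Theorem \ref{thmCO:i}, but now exploiting the structure at the first vertex rather than the last. The added vertex (relabeled as the new $v_1$) is connected only to the old $v_1$ (relabeled $v_2$), so after substituting $-1/z_1$ for $z_1$ the matrix $\widetilde{A} - \widetilde{z}_Y$ is a bordered matrix of the form
\[
\widetilde{A} - \widetilde{z}_Y = \begin{bmatrix} 1/z_1 & e_1^T \\ e_1 & A - \widehat{z}_Y \end{bmatrix},
\]
where $e_1 \in \mathbb{R}^n$ is the first standard basis vector and $\widehat{z}_Y$ is the diagonal matrix from the proof of Theorem \ref{thmCO:i}. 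The Schur complement formula for the $(1,1)$ entry of the inverse, combined with the observation that $e_1^T(A - \widehat{z}_Y)^{-1} e_1 = g_A(z_1, z_2)$, immediately yields
\[
g_{\widetilde{A}}(z_1, z_2) = \frac{1}{1/z_1 - g_A(z_1, z_2)} = \frac{z_1 q_2}{q_2 - z_1 q_1},
\]
where $g_A = q_1/q_2$ as in the previous proof. Setting $\widetilde{q}_1 := z_1 q_2$ and $\widetilde{q}_2 := q_2 - z_1 q_1$, a short check confirms that the pair $(\widetilde{q}_1, \widetilde{q}_2)$ shares no factor of $z_1$ (since $\widetilde{q}_2|_{z_1=0} = q_2|_{z_1=0} \ne 0$) and no factor involving $z_2$ (any such common factor would, via $\widetilde{q}_1 = z_1 q_2$, have to divide $q_2$ and then $q_1$, contradicting the hypothesis on $(q_1,q_2)$).

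Next, I will split variables by writing $q_j(z_1, z_2) = a_j(z_1) + z_2 b_j(z_1)$, where $a_j, b_j$ are real polynomials because $A$ is real symmetric. Then $r_1 = a_2 - i a_1$, $r_2 = b_2 - i b_1$, and by the hypothesis on $K$, the order of vanishing of
\[
\IM\bigl(r_1(x) \overline{r_2(x)}\bigr) = a_2(x) b_1(x) - a_1(x) b_2(x)
\]
at $x = 0$ equals $K$. Expanding $\widetilde{q}_2 - i \widetilde{q}_1 = (1 - i z_1) q_2 - z_1 q_1$ and collecting $z_2$-coefficients gives $\widetilde{r}_1(z_1) = (1-iz_1) a_2 - z_1 a_1$ and $\widetilde{r}_2(z_1) = (1-iz_1) b_2 - z_1 b_1$.

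The key identity to verify is
\[
\IM\bigl(\widetilde{r}_1(x) \overline{\widetilde{r}_2(x)}\bigr) = x^2 \, \IM\bigl(r_1(x) \overline{r_2(x)}\bigr) \qquad (x \in \mathbb{R}).
\]
This is a quick expansion: the product $[(1-ix)a_2 - x a_1][(1+ix) b_2 - x b_1]$ has a purely real piece $(1+x^2) a_2 b_2 + x^2 a_1 b_1$, while the two cross terms $-x(1-ix) a_2 b_1$ and $-x(1+ix) a_1 b_2$ contribute imaginary parts $i x^2 a_2 b_1$ and $-i x^2 a_1 b_2$, respectively. Hence $\IM(\widetilde{r}_1(x) \overline{\widetilde{r}_2(x)}) = x^2(a_2 b_1 - a_1 b_2)$ has order of vanishing $K+2$ at $0$, and Remark \ref{rem:cor} then identifies this as the contact order of $p_{\widetilde{A}}$ at $(-1,1)$. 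I do not expect any serious obstacle: the main content is the Schur complement simplification and the two-line expansion above, with the only real subtlety being the verification of the no-common-factor conditions needed to invoke Remark \ref{rem:cor}.
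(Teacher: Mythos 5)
Your proposal is correct and follows essentially the same route as the paper: the Schur-complement identity $g_{\widetilde{A}} = 1/(1/z_1 - g_A)$ is exactly what the paper obtains by Laplace expansion along the first row and column, and both arguments reduce to showing $\IM(\widetilde{r}_1(x)\overline{\widetilde{r}_2(x)}) = x^2\,\IM(r_1(x)\overline{r_2(x)})$ before invoking Remark \ref{rem:cor}. Your packaging in terms of generic $q_1,q_2$ (rather than the explicit determinant formulas for $s_1,s_2$ carried over from Theorem \ref{thmCO:i}) and your algebraic verification of the no-common-factor conditions are clean and fully adequate.
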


\begin{proof} We proceed as in the proof of Theorem \ref{thmCO:i}. Then from that  proof, we can extract the following useful identities from \eqref{eqn:gAcok} and \eqref{eqn:coks1s2}:
\begin{eqnarray}
\det(\overline{A} - \widehat{z}_Y) &=& -z_2 \det\left(\overline{\underline{A}} + \tfrac{1}{z_1}I\right) -s_1\left( \tfrac{1}{z_1}\right) \label{eqn:detco1} \\
\det(A - \widehat{z}_Y) &=& -z_2 \det\left(\underline{A} + \tfrac{1}{z_1}I\right) -s_2\left( \tfrac{1}{z_1}\right) \label{eqn:detco2} \\
x^K &\approx&   x^{2n-2} \left(s_2\left( \tfrac{1}{x}\right)\det\left(\overline{\underline{A}} + \tfrac{1}{x}I\right) - s_1\left( \tfrac{1}{x}\right) \det\left(\underline{A} + \tfrac{1}{x}I\right)\right), \label{eqn:firstco} 
\end{eqnarray}
for $x \in \mathbb{R}$ near $0$,
where $K$ is the contact order of $p_A$ at $(-1,1)$ and $s_1, s_2, \widehat{z}_Y$ are defined in the proof of Theorem \ref{thmCO:i}.
As in the proof of Theorem \ref{thmCO:i}, we now find polynomials $\tilde{q}_1, \tilde{q}_2$ (with no common factor of $z_1$ or common factor containing $z_2$) such that 
\[g_{\widetilde{A}}(z_1, z_2) : = f_{\widetilde{A}}\left(-\tfrac{1}{z_1}, z_2 \right) = \frac{\tilde{q}_1}{\tilde{q}_2},  \]
write $\tilde{q}_2 -i\tilde{q}_1 = \tilde{r}_1(z_1) + z_2 \tilde{r}_2(z_1)$ for polynomials $\tilde{r}_1, \tilde{r}_2 \in \mathbb{C}[z]$, and compute the order of vanishing of $\IM( \tilde{r}_1(x) \overline{\tilde{r}_2(x)})$ for $x\in \mathbb{R}$ near $0$. 

To that end, observe that using Laplace expansion along the first row and column of $(\widetilde{A}- \widehat{z}_Y)$, $(\overline{\widetilde{A}}- \widehat{z}_Y)$ and then using the formulas \eqref{eqn:detco1} and \eqref{eqn:detco2} will give

\begin{align}
g_{\widetilde{A}}(z_1, z_2) &= \frac{\det\left (\overline{\widetilde{A}} - \widehat{z}_Y\right)}{\det\left(\widetilde{A} - \widehat{z}_Y\right)}
=\frac{\det\left (A- \widehat{z}_Y\right)}{\frac{1}{z_1}\det(A-\widehat{z}_Y) - \det\left(\overline{A}-\widehat{z}_Y\right)} \nonumber\\
&= \frac{-z_2 \det\left(\underline{A} + \tfrac{1}{z_1}I\right) -s_2\left( \tfrac{1}{z_1}\right)}{\frac{1}{z_1}\left(-z_2 \det\left(\underline{A} + \tfrac{1}{z_1}I\right) -s_2\left( \tfrac{1}{z_1}\right)\right) -\left(-z_2 \det\left(\overline{\underline{A}} + \tfrac{1}{z_1}I\right) -s_1\left( \tfrac{1}{z_1}\right) \right)} \nonumber\\
&= \frac{z_1^{n}\left(z_2 \det\left(\underline{A} + \tfrac{1}{z_1}I\right) +s_2\left( \tfrac{1}{z_1}\right)\right)}{z_1^{n}\left(z_2 \left(\frac{1}{z_1}\det\left(\underline{A} + \tfrac{1}{z_1}I\right) -\det\left(\overline{\underline{A}} + \tfrac{1}{z_1}I\right)\right) +
\tfrac{1}{z_1} s_2\left( \tfrac{1}{z_1}\right) - s_1\left( \tfrac{1}{z_1}\right) \right)}. \label{eqn:q1q2final}
\end{align}
We can let the numerator and denominator in \eqref{eqn:q1q2final} be $\tilde{q}_1, \tilde{q}_2$ (there is no common $z_1$ factor since the denominator has a nonzero constant term and no term with $z_2$ can cancel since vertices $v_1$ and $v_{n+1}$ are path connected) and easily find $\tilde{r}_1$ and $\tilde{r}_2$:
\[
\begin{aligned}
\tilde{r}_1(z_1)&= z_1^{n} \left( \tfrac{1}{z_1} s_2\left( \tfrac{1}{z_1}\right) - s_1\left( \tfrac{1}{z_1}\right) -i s_2\left( \tfrac{1}{z_1}\right) \right)\\
\tilde{r}_2(z_1)&=z_1^{n} \left(\tfrac{1}{z_1}\det\left(\underline{A} + \tfrac{1}{z_1}I\right) -\det\left(\overline{\underline{A}} + \tfrac{1}{z_1}I\right)- i\det\left(\underline{A} + \tfrac{1}{z_1}I\right) \right). 
\end{aligned}
\]
Now we can compute the contact order for $p_{\widetilde{A}}$ at $(-1,1)$. Specifically, for $x\in \mathbb{R}$ near $0$,
\[ 
\begin{aligned}
\IM( \tilde{r}_1(x) \overline{\tilde{r}_2(x)}) &= x^{2n} \Big( 
 \left( \tfrac{1}{x} s_2\left( \tfrac{1}{x}\right) - s_1\left( \tfrac{1}{x}\right)\right) \det\left(\underline{A} + \tfrac{1}{x}I\right) 
\\
& \qquad -s_2\left( \tfrac{1}{x}\right) \left(\tfrac{1}{x}\det\left(\underline{A} + \tfrac{1}{x}I\right) -\det\left(\overline{\underline{A}} + \tfrac{1}{x}I\right)\right)  \Big) \\
&= x^2 x^{2n-2} \left(s_2\left( \tfrac{1}{x}\right)\det\left(\overline{\underline{A}} + \tfrac{1}{x}I\right) - s_1\left( \tfrac{1}{x}\right) \det\left(\underline{A} + \tfrac{1}{x}I\right)\right) \approx x^{2+K},
\end{aligned}
\]
where we used \eqref{eqn:firstco}.
Then applying Remark \ref{rem:cor} establishes the claim.
\end{proof}

Let $G_A$ be an $n$-vertex graph such that the only connection between vertex $n$ and the rest of the graph is a single edge between vertex $n-1$ and vertex $n$. Then $G_{A}$ has the adjacency matrix
$$A = \left[
\begin{array}{c|c}
B & \begin{array}{c} 0 \\ \vdots \\ 0 \\ 1 \end{array} \\
\hline
\begin{array}{cccc} 0 & \cdots & 0 & 1\end{array} & 0
\end{array}
\right], $$ for some $(n-1) \times (n-1)$ matrix $B$.

Now, fix $m \ge n+1$.  Relabel vertex $n$ to vertex $m$ and
expand $G_{A}$ to a new graph $G_{\breve{A}}$ by attaching an $(m-n)$-vertex graph to $G_A$ via an edge at vertex $n-1$ (and potentially other edges to the new vertex $m$). This follows Modification (iii) in Theorem \ref{thm:CO2}. Then $G_{\breve{A}}$ has the $m\times m$ adjacency matrix
\begin{equation}\label{eqn:A_breve}
    \breve{A} = \left[\begin{array}{c|c}
B & \begin{array}{cccc} 0 & \cdots & \cdots & 0 \\ \vdots & \ddots & \ddots & \vdots \\ 0 & \cdots & \cdots & 0 \\ * & \cdots & * & 1 \end{array} \\
\hline
\begin{array}{cccc}0 & \cdots & 0 & * \\ \vdots & \ddots & \vdots & \vdots\\ \vdots & \ddots & \vdots & *\\ 0 & \cdots & 0 & 1 \end{array} & C
\end{array}\right],
\end{equation}
 where the asterisks * contain at least one $1$ and are $0$ otherwise, and $C$ is the adjacency matrix of an $(m-(n-1))$-vertex graph $G_C$. This construction for $n=4$ and $m=6$ is illustrated in Figure \ref{fig:attachxgraph} below. The dotted edge indicates that one could also insert an edge between $v_4$ and $v_6$, and the contact order result would still hold.

\begin{figure}[ht!] 
\begin{minipage}{0.27\linewidth}
\begin{center}
\begin{tikzpicture}[
roundnode/.style={circle, draw=blue!60, fill=blue!5, very thick, minimum size=7mm},
squarednode/.style={rectangle, draw=red!60, fill=red!5, very thick, minimum size=7mm}, 
]
\node[roundnode]      (node1)                              {1};
\node[roundnode]        (node2)       [below=of node1] {2};
  \node[roundnode]    (node3)       [right=of node2] {3};
 \node[squarednode]       (node4)       [right=of node1] {4};
\draw[-] (node1.south) -- (node2.north);
\draw[-] (node2.east) -- (node3.west);
\draw[-]  (node4.south) -- (node3.north);
\end{tikzpicture}
\end{center}
\end{minipage}%
\begin{minipage}{0.04\linewidth}
$+$
\end{minipage}
\begin{minipage}{0.14\linewidth}
\begin{center}
\begin{tikzpicture}[
roundnode/.style={circle, draw=blue!60, fill=blue!5, very thick, minimum size=7mm},
squarednode/.style={rectangle, draw=red!60, fill=red!5, very thick, minimum size=7mm}, 
]
\node[roundnode]      (node1)                              {1};
\node[roundnode]        (node2)       [below=of node1] {2};

\draw[-] (node1.south) -- (node2.north);
\end{tikzpicture}
\end{center}
\end{minipage}%
\begin{minipage}{0.04\linewidth}
$\rightarrow$
\end{minipage}
\begin{minipage}{0.35\linewidth}
\begin{center}
\begin{tikzpicture}[
roundnode/.style={circle, draw=blue!60, fill=blue!5, very thick, minimum size=7mm},
squarednode/.style={rectangle, draw=red!60, fill=red!5, very thick, minimum size=7mm}, 
]
\node[roundnode]      (node1)                              {1};
\node[roundnode]        (node2)       [below=of node1] {2};
  \node[roundnode]    (node3)       [right=of node2] {3};
 \node[squarednode]       (node6)       [right=of node1] {6};
   \node[roundnode]       (node4)       [right=of node6] {4};
    \node[roundnode]       (node5)       [below=of node4] {5};
\draw[-] (node1.south) -- (node2.north);
\draw[-] (node2.east) -- (node3.west);
\draw[-]  (node3.north) -- (node6.south);
\draw[-]  (node4.south) -- (node5.north);
\draw[-]  (node5.west) -- (node3.east);
\draw[dashed]  (node4.west) -- (node6.east);
\end{tikzpicture}
\end{center}
\end{minipage}%

\caption{An example of the graph modification in Theorem  \ref{thmCO:iii}}
\label{fig:attachxgraph}
\end{figure}
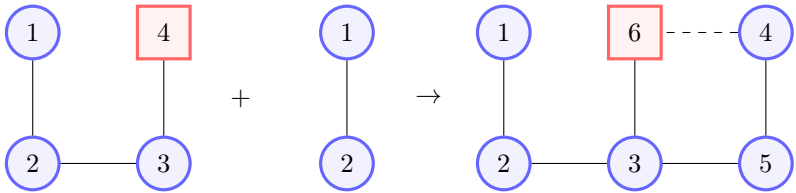

Note that the length of the shortest path between the first and last vertex will be the same for $G_A$ and $G_{\breve{A}}$. Thus, Conjecture \ref{con:CO} implies that this the contact order at $(-1,1)$ should be the same for both related polynomials. This is proved in the following result:

\begin{theorem} \label{thmCO:iii} Let $G_A$ be an $n$-vertex graph with $v_1$ and $v_{n-1}$ path connected, and $v_n$ only connected to $v_{n-1}$ and assume that contact order of $p_A$ at $(-1,1)$ is $K$. Then, with $\breve{A}$ defined in \eqref{eqn:A_breve}, the contact order of $p_{\breve{A}}$ at $(-1,1)$ is also $K$.
\end{theorem}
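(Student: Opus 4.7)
The plan is to mirror the Schur-complement strategy of Theorems \ref{thmCO:i} and \ref{thmCO:ii}: I will partition $\breve A - z_Y$ to isolate the rows and columns corresponding to the added vertices $v_n,\dots,v_m$, express $f_{\breve A}$ as a composition involving $f_A$, and then use Remark \ref{rem:cor} to read off the contact order at $(-1,1)$ from the order of vanishing of $\IM(r_1(x)\overline{r_2(x)})$ at $x=0$. The guiding idea is that under the hypothesis that $v_n$ is a single leaf attached to $v_{n-1}$ in $G_A$, the added subgraph $G'$ affects $f_A$ only through a rank-one, single-entry scalar perturbation of the reduced matrix.

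I would first block-partition
\[
\breve A - z_Y \;=\; \begin{bmatrix} B - z_1 I_{n-1} & Q \\ Q^{T} & C - \widetilde{z}_Y \end{bmatrix},
\]
where $B$ is the $(n-1)\times(n-1)$ adjacency matrix of $G_A$ with $v_n$ deleted, $Q$ has its only nonzero row in position $n-1$ equal to a $\{0,1\}$-vector $q^{T}$ (encoding the edges from $v_{n-1}$ to $v_n,\dots,v_m$, with $1$ in the last entry for the edge $v_{n-1}v_m$), and $\widetilde{z}_Y$ is diagonal with $z_1$ in the first $m-n$ entries and $z_2$ in the last. The sparsity of $Q$ forces $Q(C-\widetilde{z}_Y)^{-1}Q^{T} = \alpha(z_1,z_2)\,e_{n-1}e_{n-1}^{T}$ for the scalar $\alpha := q^{T}(C-\widetilde{z}_Y)^{-1}q$, so the Schur complement of the bottom-right block gives $f_{\breve A}(z_1,z_2) = (B - z_1 I_{n-1} - \alpha\,e_{n-1}e_{n-1}^{T})^{-1}_{11}$. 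The same reduction applied to $A - z_Y$, where $v_n$ is a single leaf attached to $v_{n-1}$, yields $f_A(z_1,z_2) = (B - z_1 I_{n-1} + \tfrac{1}{z_2}\,e_{n-1}e_{n-1}^{T})^{-1}_{11}$, and matching the two single-entry perturbations produces the composition law $f_{\breve A}(z_1,z_2) = f_A(z_1,-1/\alpha(z_1,z_2))$. A further Schur complement applied to $C - \widetilde{z}_Y$ that removes the row/column of $v_m$ rewrites $\alpha = \beta_1(z_1) + (\mu(z_1)-1)^2/(-z_2-\beta_3(z_1))$, with $\beta_1, \mu, \beta_3$ rational functions of $z_1$ built from $(C_1 - z_1 I)^{-1}$, where $C_1$ is the top $m-n$ rows/columns of $C$; so $-1/\alpha$ is a Möbius transformation in $z_2$ with coefficients rational in $z_1$.

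Next I would pass to the variables of Remark \ref{rem:cor}. Writing $g_A(z_1,z_2) := f_A(-1/z_1,z_2) = (P_1 + z_2 P_2)/(P_3 + z_2 P_4)$ in the standard form, the hypothesis says $\mathrm{ord}_0(P_3 P_2 - P_1 P_4) = K$. A Neumann expansion $(C_1 + \tfrac{1}{z_1}I)^{-1} = z_1(I + z_1 C_1)^{-1}$ shows that $\tilde\beta_i(z_1) := \beta_i(-1/z_1) = O(z_1)$ for $i \in \{1,3\}$ while $\tilde\beta_2 := (\tilde\mu - 1)^2$ satisfies $\tilde\beta_2(0) = 1$, with each $\tilde\beta_i$ rational in $z_1$ with denominator a power of the polynomial $\Delta(z_1) := \det(I + z_1 C_1)$ satisfying $\Delta(0) = 1$. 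Writing $g_{\breve A}(z_1,z_2) = g_A(z_1, \tilde h(z_1,z_2))$ in Möbius form $(\hat P_1 + z_2 \hat P_2)/(\hat P_3 + z_2 \hat P_4)$ and expanding, a direct computation yields the central identity
\[
\hat P_3 \hat P_2 - \hat P_1 \hat P_4 \;=\; \tilde\beta_2(z_1)\,(P_3 P_2 - P_1 P_4),
\]
which is the ``determinants multiply under composition'' rule for the two Möbius transformations in $z_2$.

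Finally, to convert this into a statement about contact order, I would multiply numerator and denominator of $g_{\breve A}$ by $\Delta^2$ to obtain polynomial representatives $(\tilde q_1, \tilde q_2)$ of $g_{\breve A}$; the coefficients of $z_2^{0}$ and $z_2^{1}$ in $\tilde q_1, \tilde q_2$, evaluated at $z_1 = 0$, reduce to the corresponding $P_i(0)$ thanks to $\Delta(0) = 1$, $\tilde\beta_2(0) = 1$, and $\tilde\beta_1, \tilde\beta_3 = O(z_1)$. Combined with the central identity, this gives $\IM(\tilde r_1(x)\overline{\tilde r_2(x)})$ vanishing to order exactly $K$ at $x = 0$. \emph{The hard part} will be verifying that $(\tilde q_1, \tilde q_2)$ does not share a common factor vanishing at $z_1 = 0$, which would otherwise artificially depress the contact order. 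Since $g_{\breve A}$ has genuine $z_2$-dependence (by Theorem \ref{thm:1}, applied to $G_{\breve A}$ in which $v_1$ and $v_m$ remain path connected), any common polynomial factor of $(\tilde q_1, \tilde q_2)$ must lie in $\mathbb{C}[z_1]$; and if such a factor vanished at $0$, it would force $P_i(0) = 0$ for all four $i$, which would in turn force $q_1^A, q_2^A$ to share a $z_1$-factor, contradicting the canonicity of the representation of $g_A$. Remark \ref{rem:cor} (which permits common factors $m(z_1)$ with $m(0) \neq 0$) then yields that the contact order of $p_{\breve A}$ at $(-1,1)$ is precisely $K$.
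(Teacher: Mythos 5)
Your proposal is correct, and it reaches the conclusion by a genuinely different route from the paper. The paper works entirely at the level of determinants: it Laplace-expands $\det(\breve{A}-\breve{z_Y})$ and $\det(\overline{\breve{A}}-\breve{z_Y})$ into bilinear combinations
\[
z_2\Bigl(s_1\det(B+\tfrac{1}{z_1}I)+s_2\det(\underline{B}+\tfrac{1}{z_1}I)\Bigr)+\Bigl(s_3\det(B+\tfrac{1}{z_1}I)+s_4\det(\underline{B}+\tfrac{1}{z_1}I)\Bigr),
\]
observes that $\IM(\breve{r}_1(x)\overline{\breve{r}_2(x)})$ then factors as the corresponding quantity for $A$ times $S_1(x)S_4(x)-S_2(x)S_3(x)$, and shows this extra factor has a nonzero constant term by tracking the top-degree terms of $s_1, s_2, s_4$ along the diagonal of $C$. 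Your Schur-complement reduction exposes the same multiplicative structure more conceptually: the composition law $f_{\breve{A}}(z_1,z_2)=f_A\bigl(z_1,-1/\alpha(z_1,z_2)\bigr)$ with $-1/\alpha$ a M\"obius map in $z_2$ identifies the paper's $S_1S_4-S_2S_3$ as the determinant of that M\"obius map, which you compute in closed form as $(\tilde\mu-1)^2$ (up to powers of $\Delta$) and evaluate at $0$ by a Neumann series rather than by combinatorial degree counting. Your version also isolates exactly why the leaf hypothesis on $v_n$ matters --- it makes the perturbation of $B-z_1I$ rank one and supported in the single $(n-1,n-1)$ entry, so it can be absorbed into the $z_2$-slot of $f_A$ --- and your closing common-factor check (reducing a hypothetical common $z_1$-factor of $(\tilde q_1,\tilde q_2)$ to one of $(q_1,q_2)$ via $\Delta(0)=1$, $\tilde\beta_2(0)=1$, $\tilde\beta_1(0)=\tilde\beta_3(0)=0$) makes explicit a verification the paper passes over quickly. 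In short, the paper's proof is self-contained determinant bookkeeping, while yours trades that for a structural composition identity that is arguably easier to adapt to other attachment schemes through a single cut vertex.
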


\begin{proof} To compare the contact orders, we will apply Remark \ref{rem:cor} to the functions
\[ g_A(z_1, z_2) : = f_A \left(-\tfrac{1}{z_1}, z_2 \right) = \frac{q_1}{q_2} \ \ \text{ and } \ \ g_{\breve{A}}(z_1, z_2) : = f_{\breve{A}}\left(-\tfrac{1}{z_1}, z_2 \right) = \frac{\breve{q}_1}{\breve{q}_2},  \]
where the pairs $q_1, q_2$ and $\breve{q}_1,\breve{q}_2$ have no common factor of $z_1$ or common factor containing $z_2$.
Let $\breve{z_Y}$ be the diagonal matrix (of appropriate size) with entries $-\frac{1}{z_1}, -\frac{1}{z_1}, \dots, -\frac{1}{z_1}, z_2$ along the diagonal. Then the structure of $A$  implies that \begin{align*}
g_A(z_1, z_2) = \frac{\det(\overline{A} - \breve{z_Y})}{\det(A - \breve{z_Y})} 
= \frac{z_2\det(\overline{B} + \frac{1}{z_1}I) + \det(\overline{\underline{B}} + \frac{1}{z_1}I)}{z_2\det(B + \frac{1}{z_1}I) + \det(\underline{B} + \frac{1}{z_1}I)},
\end{align*}
where the largest power of $\frac{1}{z_1}$ is exactly $n-1$ and so, we can take
\[
\begin{aligned} q_{1}(z_1,z_2) &= z_1^{n-1}\Big(z_2\det(\overline{B} + \tfrac{1}{z_1}I) + \det(\overline{\underline{B}} + \tfrac{1}{z_1}I)\Big), \\ 
q_{2}(z_1, z_2) &= z_1^{n-1}\Big(z_2\det(B + \tfrac{1}{z_1}I) + \det(\underline{B} + \tfrac{1}{z_1}I)\Big).\end{aligned}\]
As in the proofs of the previous two theorems, these $q_1, q_2$ share no common factor of $z_1$ or common factor involving $z_2$. From this, we can deduce
\[
\begin{aligned} r_1(z_1, z_2) &= z_1^{n-1}\Big(\det(\underline{B} + \tfrac{1}{z_1}I) -i \det(\overline{\underline{B}} + \tfrac{1}{z_1}I)\Big),\\
r_2(z_1, z_2) &= z_1^{n-1}\Big(\det(B + \tfrac{1}{z_1}I) - i \det(\overline{B} + \tfrac{1}{z_1}I) \Big).\end{aligned}\]
Thus, using the contact order assumption, we have
\begin{equation} \label{eqn:contact5}
\IM(r_1(x) \overline{r_2(x)}) = x^{2n-2} (\det(\underline{B} + \tfrac{1}{x}I)\det(\overline{B} + \tfrac{1}{x}I) - \det(B + \tfrac{1}{x}I)\det(\underline{\overline{B}} + \tfrac{1}{x}I)) 
\approx x^K.
\end{equation}

Now we connect this to the contact order of $p_{\breve{A}}$ or equivalently, to the order of vanishing of $\IM(\breve{r}_1(x) \overline{\breve{r}_2(x)})$ near $0$. To obtain a formula for $g_{\breve{A}}$, first recall that 
\[g_{\breve{A}}(z_1,z_2) = \frac{\det(\overline{\breve{A}} - \breve{z_Y})}{\det(\breve{A} - \breve{z_Y})}.\]

By applying the Laplace expansion repetitively on $\det(\breve{A} - \breve{z_Y})$ starting on bottom row of $\breve{A}$, and using the fact that the upper-right and the lower-left matrices in the partition of $\breve{A}$ have only a single row or column with $1$'s, one can show that there exist polynomials $s_1, s_2, s_3, s_4$ such that
\begin{align*}
\det(\breve{A} - \breve{z_Y}) =& z_2\Big(s_1(\tfrac{1}{z_1}) \det(B+\tfrac{1}{z_1} I) + s_2(\tfrac{1}{z_1}) \det(\underline{B}+\tfrac{1}{z_1} I)\Big) \\ &+ \Big(s_3(\tfrac{1}{z_1}) \det(B+\tfrac{1}{z_1} I) + s_4(\tfrac{1}{z_1}) \det(\underline{B}+\tfrac{1}{z_1} I)\Big).
\end{align*}
The highest power of $s_1$ is $m-n$ with a coefficient of $1$ by following the main diagonal.  The highest power of $s_4$ is also $m-n$ with a coefficient of $1$ by using the two $1$s at the last row and column, then following the main diagonal. The highest power for $s_2$ is $m-n-1$, using the fact that there is at least a pair of $1$s in the asterisks in $\breve{A}$.

Since the only nonzero values affected by moving from $\breve{A}$ to $\overline{\breve{A}}$ are in the change from $B$ to $\overline{B}$, we immediately obtain
\begin{align*}
\det(\overline{\breve{A}} - \breve{z_Y}) =& z_2\Big(s_1(\tfrac{1}{z_1}) \det(\overline{B}+\tfrac{1}{z_1} I) + s_2(\tfrac{1}{z_1}) \det(\overline{\underline{B}}+\tfrac{1}{z_1} I)\Big) \\ &+ \Big(s_3(\tfrac{1}{z_1}) \det(\overline{B}+\tfrac{1}{z_1} I) + s_4(\tfrac{1}{z_1}) \det(\overline{\underline{B}}+\tfrac{1}{z_1} I)\Big).
\end{align*}
Then, multiplying the formula for $g_{\breve{A}}$ by $z_1^{m-1}$ in the numerator and denominator will allow us to obtain the following formulas for $\breve{q}_1$ and $\breve{q}_2:$
\begin{align*}
\breve{q}_{1} &= z_1^{n-1}\Big(S_1(z_1)z_2 \det(\overline{B}+\tfrac{1}{z_1} I) + S_2(z_1)z_2 \det(\underline{\overline{B}}+\tfrac{1}{z_1} I) + S_3(z_1) \det(\overline{B}+\tfrac{1}{z_1} I) \\ & \hspace{.5in} + S_4(z_1) \det(\underline{\overline{B}}+\tfrac{1}{z_1} I)\Big)\\
\breve{q}_{2} &= z_1^{n-1}\Big(S_1(z_1)z_2 \det(B+\tfrac{1}{z_1} I) + S_2(z_1)z_2 \det(\underline{B}+\tfrac{1}{z_1} I) + S_3(z_1) \det(B+\tfrac{1}{z_1} I)  \\ & \hspace{.5in} + S_4(z_1) \det(\underline{B}+\tfrac{1}{z_1} I)\Big),
\end{align*}
where $S_k(z_1) = z_1^{m-n}s_k(\frac{1}{z_1})$ for $k=1, \dots, 4.$ 

By manipulating the equations $\breve{q}_1$ and $\breve{q}_2$, we obtain
\[ \begin{aligned}\breve{r}_1 &= z_1^{n-1}\Big(S_3(z_1) \det(B+\tfrac{1}{z_1} I) + S_4(z_1) \det(\underline{B}+\tfrac{1}{z_1} I) -iS_3(z_1) \det(\overline{B}+\tfrac{1}{z_1} I) \\ & \hspace{.5in} -iS_4(z_1) \det(\underline{\overline{B}}+\tfrac{1}{z_1} I) \Big) \\
\breve{r}_2 &= z_1^{n-1}\Big(S_1(z_1)\det(B+\tfrac{1}{z_1} I) + S_2(z_1)\det(\underline{B}+\tfrac{1}{z_1} I)-iS_1(z_1) \det(\overline{B}+\tfrac{1}{z_1} I) \\ & \hspace{.5in}  -iS_2(z_1) \det(\underline{\overline{B}}+\tfrac{1}{z_1} I)\Big). \end{aligned}\]
By substituting and canceling terms, we have
\begin{align*}
 \IM(\breve{r}_1(x) &\overline{\breve{r}_2(x)})  \\
=& x^{2n-2} \Big(\det(\underline{B} + \tfrac{1}{x}I)\det(\overline{B} + \tfrac{1}{x}I) - \det(B + \tfrac{1}{x}I)\det(\underline{\overline{B}} + \tfrac{1}{x}I)\Big) \\
& \hspace{1in} \times \Big(S_1(x)S_4(x)-S_2(x)S_3(x)\Big) \\
\approx & x^K\Big(S_1(x)S_4(x)-S_2(x)S_3(x)\Big),
\end{align*}
where we used \eqref{eqn:contact5}.
By the earlier statements about the highest powers of the $s_k$, $S_1(0), S_4(0) \ne 0$ and $S_2(0)=0$, so the lowest order term in $S_1(x)S_4(x)-S_2(x)S_3(x)$ must be a nonzero constant term. Thus the contact order of $p_{\breve{A}}$ at $(-1,1)$ is also $K$, as needed.
\end{proof}

\section{Open Questions} \label{sec:open}

This paper initiates the study of stable polynomials constructed via general graphs and a specific type of coloring matrix $Y$. The line of inquiry is still quite new and many questions remain open. Here are three possible directions of inquiry: \\

\begin{itemize}

\item[1.] \emph{What can one say about additional boundary zeros of the stable polynomials $p_A^t$?} A number of special cases were studied in \cite{H23}, but we currently know of no general way to determine (from the graph's properties) whether the polynomial will have extra boundary zeros.  \\

\item [2.] \emph{What happens when one changes the coloring matrix $Y$ or vector $\alpha$?} For our particular choice of $Y$, the existence of at least one boundary zero was fully determined by whether the first and last vertices of the graph were path connected. It would be very interesting if there was a more  general characterization of the existence of boundary zeros that held for stable polynomials constructed from general triples ($A$, $Y$, $\alpha$). In particular, changing the matrix $Y$ would allow for much more complicated (higher degree) polynomials. \\

\item [3.] \emph{What additional information do these types of stable polynomials encode about the associated colored graphs?} This current paper was mostly concerned with using  graphs to obtain polynomials with particular properties. However, one could imagine going in the other direction and using a polynomial to obtain information about an underlying graph.  For example, can one use these  polynomials (or related objects) to count the number of paths with certain properties within the colored graph? \\ 

\end{itemize}
\newpage

\end{document}